\newtheorem{theorem}{Theorem}[section]
\newtheorem{proposition}[theorem]{Proposition}
\newtheorem{lemma}[theorem]{Lemma}
\newtheorem{example}[theorem]{Example}
\newtheorem{definition}[theorem]{Definition}
\newtheorem{thm}{Theorem}
\def\diam{\mathrm{diam}}
\def\mcJ{\mathcal{J}}
\def\mcD{\mathcal{D}}
\def\mcE{\mathcal{E}}
\numberwithin{equation}{section}
\begin{document}
\title[Existence and Uniqueness of diffusions on the Julia sets of Misiurewicz-Sierpinski maps]{Existence and Uniqueness of diffusions on the Julia sets of Misiurewicz-Sierpinski maps}

\author{Shiping Cao}
\address{Department of Mathematics, Cornell University, Ithaca 14853, USA}
\email{sc2873@cornell.edu}

\author{Malte S. Ha\ss ler}
\address{Department of Mathematics, Cornell University, Ithaca 14853, USA}
\curraddr{} \email{mh2479@cornell.edu}
\thanks{}

\author{Hua Qiu}
\address{Department of Mathematics, Nanjing University, Nanjing, 210093, P. R. China.}
\curraddr{} \email{huaqiu@nju.edu.cn}

\author{Ely Sandine}
\address{Department of Mathematics, Cornell University, Ithaca 14853, USA}
\email{ebs95@cornell.edu}

\author{ Robert S. Strichartz }
\address{Department of Mathematics, Cornell University, Ithaca, 14853, U.S.A.}
\curraddr{} \email{str@math.cornell.edu}

\subjclass[2010]{Primary 28A80}

\date{}

\keywords{Julia sets, diffusions, existence, uniqueness, Misiurewicz-Sierpinski maps}

\begin{abstract}
We study the balanced resistance forms on the Julia sets of Misiurewicz-Sierpinski maps, which are self-similar resistance forms with equal weights. In particular, we use a theorem of Sabot to prove the existence and uniqueness of balanced forms on these Julia sets. We also provide an explorative study on  the resistance forms on the Julia sets of rational maps with periodic critical points.
\end{abstract}
\maketitle

\section{introduction}
The study of diffusion processes on fractals emerged as an independent research field in the late 80's. Initial interest in such processes came from mathematical physicists working in the theory of disordered media \cite{AO,HBA,RTP}. On self-similar sets, the pioneering works are the constructions of  Brownian motions on the Sierpinski gasket \cite{G,kus} originated by Kusuoka and Goldstein independently and later \cite{BP} by Barlow and Perkins, and on the Sierpinski carpet \cite{BB} by Barlow and Bass. See \cite{KZ} for an equivalent but different construction put forth by Kusuoka and Zhou at {about} the same time. The Sierpinski gasket is finitely ramified, meaning that the fractal can be disconnected  by the removal of finitely many points, and the construction was later extended to wider families {of fractals},  such as the nested fractals \cite{Lindstrom} by Lindstr{\o}m, and  the post-critically finite (p.c.f.) self-similar sets \cite{ki1,ki2} by Kigami. Due to the rough structure of the fractals, the diffusion processes move slower  on average than a standard Brownian motion on $\mathbb{R}^d$, see \cite{BB1,BP,FHK,HK,kum} for the associated transition density estimates. See books \cite{B,ki3,s3} for  systematic explorations { of the subject} and more bibliographies.

On p.c.f. self-similar sets, Kigami \cite{ki1,ki2} showed that Dirichlet forms can be constructed as limits of electrical networks on approximating graphs. The construction relies on determining a proper form on the initial graph, whose existence and uniqueness in general  is a difficult and fundamental problem in fractal analysis. On nested fractals, Lindstr{\o}m proved that there always exists a symmetric diffusion process \cite{Lindstrom}. The problem was also investigated  on nested fractals and p.c.f. self-similar sets by Metz \cite{M2} and Sabot \cite{Sabot} respectively. In particular, Sabot ingeniously proved the uniqueness of  a symmetric diffusion process of equal weights on nested fractals by introducing the notion of \textit{preserved relations}. See also \cite{M3} by Metz, and \cite{Pe} by Peirone for short proofs. For p.c.f. self-similar sets, the general problem of uniqueness is still open, see \cite{HMT,Sabot} for some sufficient conditions. The uniqueness theorem for non-p.c.f. self-similar sets is more difficult, see \cite{BBKT} for a positive answer for the generalized Sierpinski carpets.

In this paper, we utilize Sabot's techniques on nested fractals  to study the existence and uniqueness of diffusions on a new class of finitely ramified fractals, the Julia sets of Misiurewicz-Sierpinski maps,  as introduced in  \cite{DRS}. Let 
\[R_{\lambda,n,m}(z)=z^n+\frac{\lambda}{z^m},\quad n\geq 2,m\geq1,\lambda\in\mathbb{C}\]
be a rational map. A point $c\in \mathbb{C}$ is a \textit{critical point} if $R'_{\lambda,n,m}(c)=0$. We call $R_{\lambda,n,m}$ a \textit{Misiurewicz-Sierpinski map} (\textit{MS map} for short) if:
\vspace{0.15cm}

(MS1). each critical point of $R_{\lambda,n,m}$ is on the boundary of the immediate attracting basin of $\infty$;

(MS2). each critical point of $R_{\lambda,n,m}$ is strictly preperiodic.
\vspace{0.15cm}

\noindent The dynamics and topological properties of {the} Julia sets $K_{\lambda,n,m}$ ($K_\lambda$ for short) associated with {the} MS map{s} of $R_{\lambda,n,m}$ were studied in \cite{DL,DRS}. In particular, $K_\lambda$ is a generalized Sierpinski gasket, in the sense that it is a limit set obtained by a similar recursive process defined for the Sierpinski gasket, but applied instead to the closed unit disk as starting set and by removing polygons of $N$ sides.  Due to this, there is a natural p.c.f. self-similar structure on $K_\lambda$, with i.f.s. $\{F_i\}_{i=1}^{m+n}$, which will be described in Section 2. See Figure \ref{fig1} for some examples of such fractals, where the green blocks  denote critical points, and the orange blocks  denote orbits of critical points.

\begin{figure}[htp]
	\includegraphics[width=4.8cm]{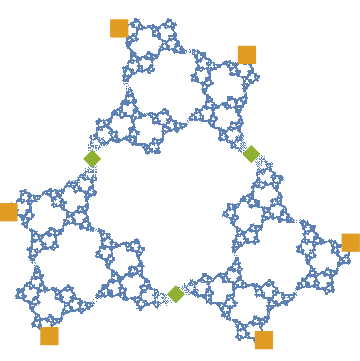}\hspace{0.4cm}
	\includegraphics[width=4.8cm]{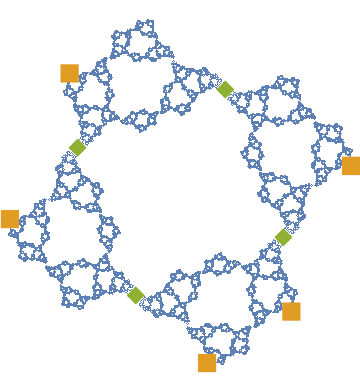}\hspace{0.4cm}
	\includegraphics[width=4.8cm]{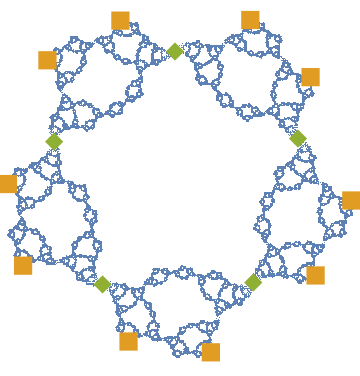}
	\begin{picture}(0,0)
	\put(-180,0){$m=1, n=2$}
	\put(-212,-10){$\lambda\approx -0.0380422+0.42623i$}
	\put(-30,0){$m=2, n=2$}
	\put(-62,-10){$\lambda\approx-0.0196286+0.275378i$}
	\put(120,0){$m=3, n=2$}
	\put(88,-10){$\lambda\approx -0.01295+0.204151 i$}
	\end{picture}
	\caption{Examples of MS Julia sets.}\label{fig1}
\end{figure}

The family of {Julia sets of} MS maps provides us a rich class of fractals. In particular, for fixed $m,n$ and MS parameters $\lambda,\tau$, {if $\tau\notin \{\lambda e^{\frac{2k\pi i}{n-1}},\bar{\lambda}e^{\frac{2k\pi i}{n-1}}:1\leq k\leq n-1\}$}, $K_\lambda$ and $K_\tau$ are not topological{ly} equivalent \cite{DRS}. The self-similar structure can be complicated depending on the choice of $\lambda$. Despite these difficulties, we will prove the existence and uniqueness of balanced resistance forms on such Julia sets.

\begin{thm}
Let $R_{\lambda,n,m}$ be a MS map, and $K_\lambda$ be the associated Julia set. There exists a unique resistance form $(\mathcal E_\lambda, \mathcal F_\lambda)$ on $K_\lambda$ such that 
\[\mcE_\lambda(f)=\eta\sum_{i=1}^{m+n}\mcE_\lambda(f\circ F_i), \quad \forall f\in\mathcal F_\lambda,\]
for some constant $\eta>1$. We call such a form a balanced form. 
\end{thm}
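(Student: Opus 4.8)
The plan is to reduce the theorem to a finite-dimensional eigenform problem on the boundary of the p.c.f.\ structure and then to verify the hypotheses of Sabot's theorem \cite{Sabot} for $K_\lambda$. Following Kigami's theory, let $V_0$ be the boundary set of the self-similar structure described in Section 2, let $V_1=\bigcup_{i=1}^{m+n}F_i(V_0)$ be its first subdivision, and let $\mathcal G$ be the renormalization map on the cone of irreducible Dirichlet forms on $V_0$, sending $D$ to the trace onto $V_0$ of the level-one form $g\mapsto\sum_{i=1}^{m+n}D(g\circ F_i)$ on $V_1$. A balanced resistance form with constant $\eta$ exists precisely when $\mathcal G$ has an eigenform, i.e.\ a Dirichlet form $D$ with $\mathcal G(D)=\rho D$ for some $\rho>0$; for $K_\lambda$ every eigenform will turn out to have $\rho<1$ (step (ii) below), and then $\eta=\rho^{-1}>1$ as required. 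Moreover the balanced form is unique, a resistance form being fixed up to a positive multiple, if and only if the eigenform is unique up to scaling. The map $\mathcal G$ is well defined, positively homogeneous, monotone, and preserves irreducibility because $K_\lambda$ — hence the cell-intersection graph of $V_1$ — is connected; from an eigenform one recovers $(\mathcal E_\lambda,\mathcal F_\lambda)$ by the usual compatible-sequence construction, and conversely any balanced form restricts to one.

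The next step is to read off from Section 2 the finite combinatorial data required by Sabot's criterion: the set $V_0$, the contact points $F_i(V_0)\cap F_j(V_0)$, and the resulting level-one graph. Because $K_\lambda$ is a generalized Sierpinski gasket obtained by excising $N$-gons, this graph has a transparent shape — the $m+n$ cells are glued at single points in a cyclic pattern around the removed polygon(s), and $V_0$ is pinned down by the dynamical conditions (MS1)--(MS2). On this finite structure one checks the hypotheses of \cite{Sabot}: (i) nondegeneracy and irreducibility of the level graphs, immediate from connectedness of $K_\lambda$; (ii) $\rho<1$, i.e.\ regularity of the resulting harmonic structure, which is standard for a connected generalized gasket since consecutive cells meet at points interior to $K_\lambda$ and so force series structure and strict growth of effective resistance; and, the crux, (iii) that the only linear relations among the coordinates on $V_0$ preserved by $\mathcal G$ are the trivial ones spanned by the constants. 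Granting (i)--(iii), Sabot's fixed-point argument produces an eigenform and his preserved-relations criterion forces its uniqueness up to scaling, which is the theorem.

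The main obstacle is (iii), and it is made delicate by the fact — stressed in the Introduction — that the adjacency pattern of the $m+n$ cells and the precise position of $V_0$ depend on $\lambda$ and can be intricate. One therefore needs first a description of these adjacencies that is uniform in $\lambda,n,m$, and then must show that no nonconstant linear relation among the values on $V_0$ can be propagated consistently through all $m+n$ cells under harmonic extension followed by tracing. Here the cyclic, gasket-type gluing together with the strict preperiodicity of the critical orbits (MS2), which prevents extra identifications among the cells, are what should rule out nontrivial preserved relations; once (iii) is established the remaining verifications are routine bookkeeping and the theorem follows.
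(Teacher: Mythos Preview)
Your reduction to Sabot's eigenform problem is correct, but claim (iii) is false: for $m=1$ there exist non-trivial preserved equivalence relations on $V_0$. Example~\ref{example318} ($m=1$, $n=2$, $\theta_\lambda=\tfrac1{12}$) exhibits one explicitly --- pair each boundary vertex with its ``opposite'' --- and this relation is even invariant under the rotation group. So the hope that the cyclic gluing and strict preperiodicity alone rule out non-trivial preserved relations does not survive, and what remains is not ``routine bookkeeping.'' The paper instead splits into cases. For $m\geq 2$ it does prove (Proposition~\ref{prop317}) that there are no non-trivial rotation-invariant preserved relations, but this requires a genuine argument using the conjugate dynamics $\Phi_n$ on $\beta_\lambda$ and the induced distance $d_{\beta_\lambda}$ (Lemmas~\ref{lemma312}--\ref{lemma316}). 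For $m=1$ the paper first classifies all non-trivial rotation-invariant preserved relations --- there are at most two, denoted $\mcJ_\pm$ (Lemma~\ref{lemma321}) --- and then verifies Sabot's inequalities $\overline\rho^{\mathcal G}_\mcJ\leq 1<1+\tfrac1n\leq\underline\rho^{\mathcal G}_{V_0/\mcJ}$ directly for those (Proposition~\ref{prop323}).

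There is a second gap. The argument above, carried out with $\mathcal G$ the rotation group, yields uniqueness only among rotation-symmetric eigenforms; the theorem asserts uniqueness among \emph{all} balanced forms. The paper treats this separately (Section~3.3): fixing the symmetric eigenform $\mcD$ with $T\mcD=\eta^{-1}\mcD$ already constructed, it builds for each preserved relation $\mcJ$ (no symmetry assumed) a degenerate comparison form $\mcD_\mcJ\in\mathcal M_\mcJ$ and uses a flow/normal-derivative argument on harmonic functions (Lemmas~\ref{lemma329}--\ref{lemma332}) to show that $\eta^kT_\mcJ^k\mcD_\mcJ<\mcD_\mcJ$ for some $k$. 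Combined with the easy bound $\underline\rho_{V_0/\mcJ,k}\geq\eta^{-k}$ (Lemma~\ref{lemma325}), this gives $\overline\rho_{\mcJ,k}<\underline\rho_{V_0/\mcJ,k}$ and hence full uniqueness via the iterated form of Sabot's criterion. Your outline contains neither of these two substantial pieces.
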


Though we are considering resistance forms with equal weights, our story has some essential differences with that  of nested fractals.
\vspace{0.15cm}

1. The uniqueness is a little stronger than that on nested fractals in that symmetry of the form is not required  by the problem. The same conclusion is not true in general on nested fractals, for example {the} Vicsek sets {admit} infinitely many different resistance forms with equal weights \cite{M1,Sabot}.   

2. {Compared to the MS Julia sets,} nested fractals have a larger symmetry group, {big enough} so that any pair of the boundary vertices are permuted by some element, and the existence  can be proven with a fixed point argument \cite{Lindstrom}. On the other hand, our proof of existence will use the full strength of Sabot's techniques. In particular, the proof depends crucially on the dynamics of $R_{\lambda,n,m}$ on $K_\lambda$. One main difficulty is to find all possible non-trivial preserved $\mathcal{G}$-relations.
\vspace{0.15cm}

We mention that there have been several previous works studying the resistance forms on Julia sets of polynomial maps \cite{ADS,FS,RT}, but the methods and goals are quite different from those in this paper. Additionally, at the end of this {work}, we  study other Julia sets associated with rational maps,  specifically those with fixed critical points. See Figure \ref{fig0} for an illustration.  The resistance forms on such Julia sets admit graph-directed structures. 

\begin{figure}[htp]
\includegraphics[width=4.5cm]{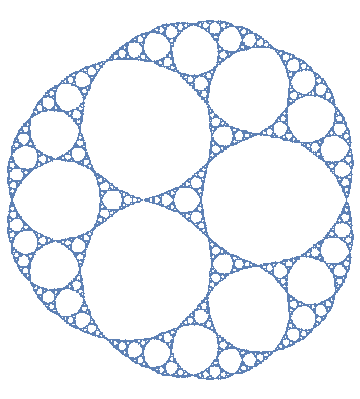}\hspace{1cm}
	\includegraphics[width=4.5cm]{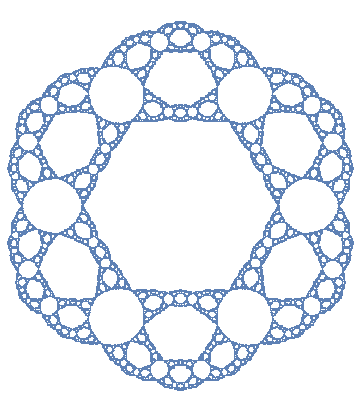}
	\begin{picture}(0,0)
	\put(-250,-10){$m=1, n=2$}
	\put(-95,-10){$m=3, n=3$}
	\end{picture}
	\caption{Julia sets of $R_{\lambda,n,m}$ with a fixed critical point.}\label{fig0}
\end{figure} 

We briefly introduce the structure of the paper. In Section 2, we will introduce the p.c.f. self-similar structures and some dynamic{ al} properties of the Julia sets of MS maps. Section 3 will be our main section, where we prove Theorem 1. This section will be divided into 4 parts. In the first part, we review the construction of resistance forms, and introduce Sabot's theorem. In the second part, we prove the existence of resistance forms. In the third part, we prove the uniqueness of resistance forms. Lastly, at the end of Section 3, we provide some examples. In Section 4, we present explorative results on the Julia sets of rational maps with a fixed critical point. Some rough discussions on the existence and non-existence of forms will be provided.

Throughout this paper, we will write $R_\lambda$ instead of $R_{\lambda,n,m}$ {if no confusion is likely.}

\section{A review of Misiurewicz-Sierpinski maps}\label{sec2}
In this section, we briefly review some simple properties of MS maps, and introduce self-similar structures on their associated Julia sets. Readers can find more details in \cite{DRS}.

Let $R_\lambda(z)=z^n+\frac\lambda {z^m}$ be a MS map and $K_\lambda$  the associated Julia set. Recall that a point $c {\in \mathbb C}$ is a \textit{critical point} if
\[R'_\lambda(c)=nz^{n-1}-m\frac{\lambda}{z^{m+1}}=0.\]
Let $C$ be the set of critical points, excluding the poles at $0$ and $\infty$. We have $\# C=m+n$, and $C$ admits rotational symmetry about $z=0$, i.e. $e^{\frac{2\pi i}{m+n}}C=C$. Moreover, we have the following proposition (see Theorem 3.3 in \cite{DRS} with $m=n=2$, which holds in general with  an almost identical proof).

\begin{proposition}
The critical set $C$ is the only set of $m+n$ points in the Julia set whose removal disconnects $K_\lambda$ into exactly $m+n$ components.
\end{proposition}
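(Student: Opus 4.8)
The plan is to run everything through the first-level cyclic cell structure recorded in Section~\ref{sec2}. Writing $A_i:=F_i(K_\lambda)$, that structure gives $K_\lambda=\bigcup_{i=1}^{m+n}A_i$ with $A_i\cap A_{i+1}=\{c_i\}$ a single critical point for every $i$ (indices mod $m+n$), $A_i\cap A_j=\emptyset$ for non-adjacent $i,j$, and $C=\{c_1,\dots,c_{m+n}\}$; let $V_0$ denote the boundary of the p.c.f.\ structure, i.e.\ the $m+n$ outer vertices (each fixed by one $F_i$), which is disjoint from $C$. The engine of the whole proof is the estimate
\[
\#\{\text{components of }K_\lambda\setminus Q\}\ \le\ \max\bigl(1,\ |Q|-|Q\cap V_0|\bigr)\qquad\text{for every finite }Q\subseteq K_\lambda ,
\]
which encodes at once that $K_\lambda$ has no cut points and that deleting outer vertices never helps to disconnect it. I would prove it by induction on scale, pushing the bound from the subcells $A_i\cong K_\lambda$ up to $K_\lambda$ through the decomposition above and using local connectedness of $K_\lambda$, then extending to arbitrary finite $Q$ by compactness.

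Granting the estimate, both halves of the Proposition drop out. That $C$ achieves the stated property: the sets $A_i\setminus\{c_{i-1},c_i\}$, $1\le i\le m+n$, are pairwise disjoint (since $A_i\cap A_j\subseteq C$), cover $K_\lambda\setminus C$, and each — being $F_i$ applied to $K_\lambda$ with two outer vertices deleted — is nonempty and connected, so $K_\lambda\setminus C$ has exactly $m+n$ components. For uniqueness, fix $P$ with $|P|=m+n$ and $K_\lambda\setminus P$ having exactly $m+n$ components, and run an induction on the self-similar scale of $P$ (concretely, on the least level at which its points lie in distinct cells; when this level is $1$ no induction is needed, since then $P$ cannot be concentrated in a single cell). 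Set $J:=P\cap C$ and $n_i:=\#\bigl(P\cap(A_i\setminus\{c_{i-1},c_i\})\bigr)$, so $|J|+\sum_i n_i=m+n$, and let $k_i:=\#\{\text{components of }A_i\setminus P\}$. Regluing the sets $A_i\setminus P$ along the surviving junctions $c_i\notin J$ and encoding the result as a graph $\Gamma$ (vertices: the components of the $A_i\setminus P$; edges: the $m+n-|J|$ surviving junctions, each joining the component of $A_i\setminus P$ meeting $c_i$ to that of $A_{i+1}\setminus P$), one obtains
\[
\#\{\text{components of }K_\lambda\setminus P\}=\sum_{i=1}^{m+n}k_i-(m+n-|J|)+b_1(\Gamma),
\]
where the cycle rank $b_1(\Gamma)$ is $0$ if $|J|\ge 1$ and is at most $1$ if $|J|=0$ (there is only one way around the cyclic chain of cells, and $b_1(\Gamma)=1$ forces $c_{i-1},c_i$ into a common component of $A_i\setminus P$ for every $i$). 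Finally, applying the estimate to $A_i$ gives $k_i\le\max(1,n_i)$, hence $\sum_i k_i\le\#\{i:n_i=0\}+\sum_i n_i$.

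When $|J|\ge 1$ these inputs combine to $\#\{\text{components of }K_\lambda\setminus P\}\le\#\{i:n_i=0\}\le m+n$, so equality forces every $n_i=0$, hence $|J|=m+n$, i.e.\ $P=C$. When $|J|=0$ they give $\#\{\text{components}\}\le 1+\#\{i:n_i=0\}\le m+n$, and equality forces exactly one cell $A_{i_0}$ to contain all of $P$, with $k_{i_0}=m+n$ and with $c_{i_0-1},c_{i_0}$ in a common component of $A_{i_0}\setminus P$; here I descend to $A_{i_0}\cong K_\lambda$ and apply the inductive hypothesis to $F_{i_0}^{-1}(P)$, which — having $m+n$ points and producing $m+n$ components — must equal $C$, so $P\cap A_{i_0}=F_{i_0}(C)$; but then $c_{i_0-1}$ and $c_{i_0}$, being images under $F_{i_0}$ of two distinct outer vertices, lie in distinct components of $A_{i_0}\setminus P=F_{i_0}(K_\lambda\setminus C)$, a contradiction. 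Thus $|J|\ge 1$ and $P=C$.

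The step I expect to be the genuine obstacle is the displayed estimate itself: proving that $K_\lambda$ has no cut points and, more finely, that its component count under finite deletions is governed by the non-outer-vertex part of the deleted set, uniformly across all scales. This is a self-referential induction whose closure relies precisely on the Section~\ref{sec2} fact that cell overlaps at every level are single critical points arranged in a cycle — the rigidity coming from the dynamics of $R_\lambda$ — and it is also what makes $C$ the unique extremal configuration. Once the estimate and the component-count formula are in hand, the rest, including the $|J|=0$ case (the one place the statement is fed back to itself at a smaller scale), is bookkeeping.
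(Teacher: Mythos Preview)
The paper does not prove this proposition; it simply cites Theorem~3.3 of \cite{DRS} (stated there for $m=n=2$) and asserts that the same argument works in general. So there is no in-paper proof to compare against, and your attempt has to stand on its own.

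It does not, for a structural reason: the argument is circular. Every ingredient you invoke from Section~\ref{sec2}---the maps $F_i$, the $1$-cells $A_i=F_iK_\lambda$, the identities $A_i\cap A_{i+1}=\{c_i\}$ and $A_i\cap A_j=\emptyset$ for non-adjacent $i,j$, and hence the cyclic ``ring'' picture---is introduced in the paper \emph{after} the proposition and is derived \emph{from} it. The $F_i$ are defined as the inverse branches of $R_\lambda$ on the closures of the components of $K_\lambda\setminus C$; to know there are $m+n$ such branches mapping $K_\lambda$ homeomorphically onto pieces that touch only at critical points is exactly the content of the proposition (or at least its ``existence'' half) together with the subsequent discussion. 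So your proof of ``$C$ achieves the stated property'' already assumes the conclusion, and your uniqueness argument rests on a decomposition whose validity is what is at stake. A genuine proof has to obtain the cell structure from the complex dynamics of $R_\lambda$ on $K_\lambda$ (the behaviour of $\beta_\lambda$, the conjugacy to $\Phi_n$, the structure of the preimages of the basins), which is what \cite{DRS} does.

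There is also a concrete misreading of $V_0$. You describe it as ``the $m+n$ outer vertices (each fixed by one $F_i$)''. In this paper $V_0=\bigcup_{k\ge 1}R_\lambda^{\circ k}(C)$, which in general has more than $m+n$ points (Example~\ref{example22}(a) has $\#V_0=6$ with $m+n=3$, and (b) has $\#V_0=5$ with $m+n=4$), and its elements are forward orbits of critical points, not fixed points of the $F_i$. Your displayed estimate and its inductive proof are built on the wrong picture of $V_0$. It is true that $c_{i-1},c_i\in F_iV_0$ and that $V_0$ is forward-invariant, so a corrected version of the inequality might be salvageable, but as written the justification is off and the constant ``$|Q\cap V_0|$'' is not the quantity you think it is.
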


Denote the $m+n$ components of $K_\lambda\setminus C$ as $\{\mathring{K}_{\lambda,1},\cdots,\mathring{K}_{\lambda,m+n}\}$. For each $i$, {we let} $K_{\lambda,i}$ {be} the closure of $\mathring{K}_{\lambda,i}$ and call it a \textit{$1$-cell} of $K_\lambda$. The map $R_\lambda$ is then a homeomorphism from $K_{\lambda,i}$ to $K_\lambda$, and we denote $F_i$ the $i$-th branch of $R_\lambda^{-1}$ from $K_\lambda$ to $K_{\lambda,i}$. We thus have
\[C=\bigcup_{i\neq j} F_iK_\lambda\cap F_jK_\lambda,\quad K_\lambda=\bigcup_{i=1}^{m+n} F_iK_\lambda.\]

Next, according to Proposition 3.6 in \cite{DRS}, we have
\[\diam(F_{\omega_1}F_{\omega_2}\cdots F_{\omega_k}K_\lambda)\to 0, \text{ as }k\to\infty,\]
for any infinite word $\omega\in \{1,2,\cdots,m+n\}^{\mathbb{N}}$. This provides an addressing map $\Lambda:\{1,2,\cdots,m+n\}^{\mathbb{N}}\to K_\lambda$ defined by
\[\{\Lambda(\omega)\}=\bigcap_{k=1}^\infty F_{\omega_1}F_{\omega_2}\cdots F_{\omega_k}K_\lambda.\]
Define $V_0=\bigcup_{k=1}^\infty R_\lambda^{\circ k}(C)$ and $\mathcal{P}=\Lambda^{-1}(V_0)$, then we have
\[\#\mathcal{P}=\# V_0<\infty.\]
In fact, by (MS1), we can see that $\# V_0<\infty$ and $V_0\cap\bigcup_{k=0}^\infty R_\lambda^{-k}(C)=\emptyset$, which implies that $\#\mathcal{P}=\# V_0$. This shows that $K_\lambda$ admits a natural post-critically finite (p.c.f. for short) self-similar structure with the iterated function system (i.f.s. for short) $\{F_i\}_{i=1}^{m+n}$. See Figure \ref{fig1} for some examples of $K_\lambda$, with $C$ ({green blocks}) and $V_0$ ({orange blocks}) marked.

To better understand the self-similar structure, we use the boundary $\beta_\lambda$ of the immediate attracting basin $B_\lambda$ of $\infty$, and refer to the fact that $\beta_\lambda$ is a simple closed curve \cite{DRS}. The critical set $C$ disconnects $\beta_\lambda$ into $m+n$ components, and each is  contained in a component of $K_\lambda$, say $K_{\lambda,i}$. Moreover, by suitably ordering $K_{\lambda,i}=F_iK_\lambda$ and critical points $c_i\in C$, we have the property that
\begin{equation}\label{eqn21}
\{c_{i-1},c_i\}=F_iK_\lambda\cap C, \text{ for } 1\leq i\leq m+n,
\end{equation}
where we use cyclic notation $m+n=0$. The $1$-cells of $K_\lambda$ form a `ring' shape, see Figure \ref{fig22} for an illustration.

\begin{figure}[htp]
\includegraphics[width=5.3cm]{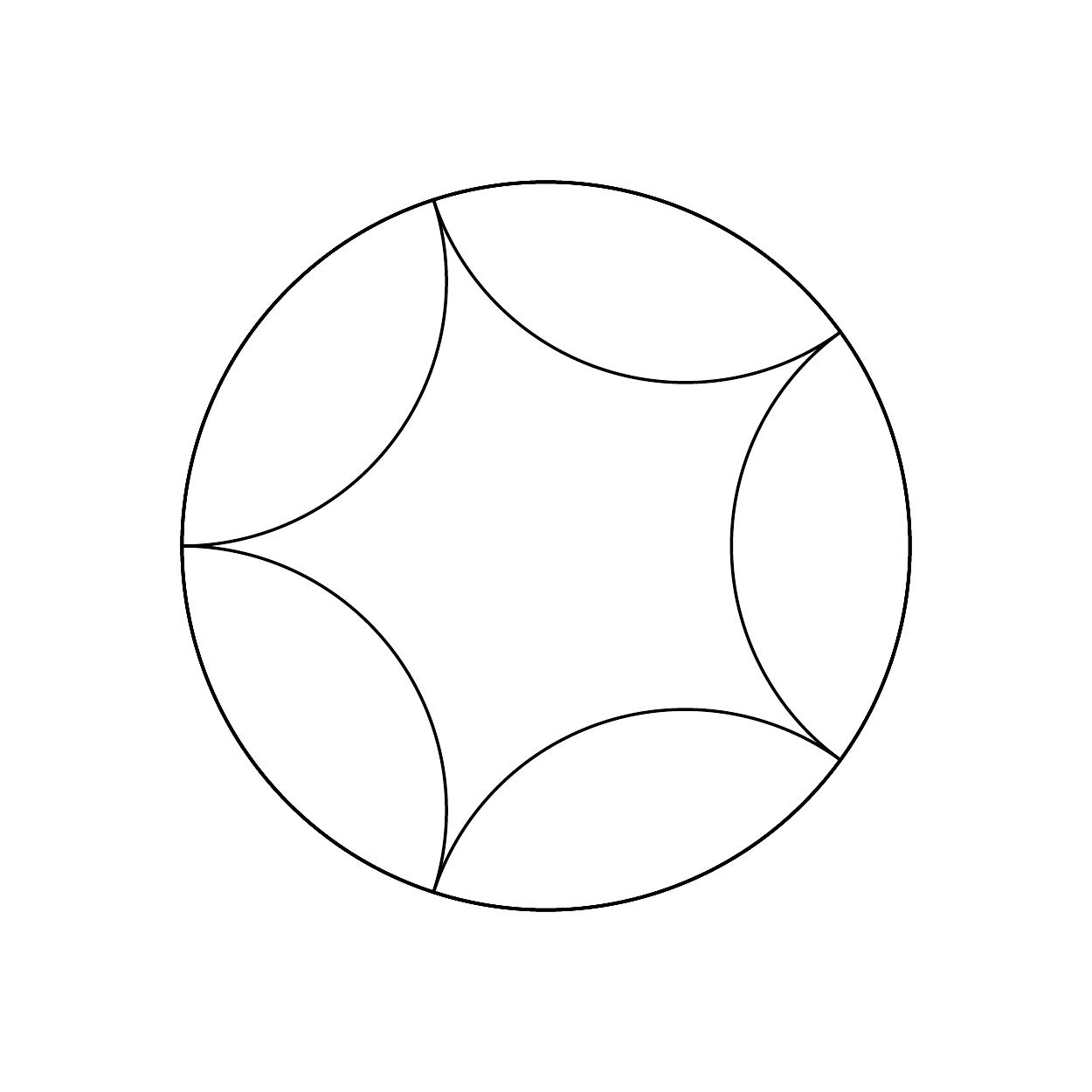}
\begin{picture}(0,0)
    \put(-20,25){$c_{m+n}$}
    \put(-35,70){$F_1K_\lambda$}
    \put(-80,18){$F_{m+n}K_\lambda$}
    \put(-16,117){$c_1$}
\end{picture}
\caption{An illustration of level-1 cells.}\label{fig22}
\end{figure}

It is well-known that $R_\lambda:\beta_\lambda\to \beta_\lambda$ is conjugate to a simple dynamic on the unit { circle} $\mathbb{T}$. More precisely, we formally define the unit circle as
\[\mathbb{T}=\mathbb{R}/\mathbb{Z}=\{[r]=r+\mathbb{Z}: r\in \mathbb{R}\}.\]
For $n\geq 2$, we define $\Phi_n:\mathbb{T}\to \mathbb{T}$ by
\[\Phi_n([\theta])=[n\theta].\]
Then, there is a homeomorphism $\psi_{\lambda,n,m}:\beta_\lambda\to \mathbb{T}$ ($\psi_\lambda$ for short) such that
\[\psi_{\lambda}\circ R_\lambda=\Phi_n\circ \psi_\lambda.\]
 We denote by $\theta_\lambda$  the unique element of $\psi_\lambda(C)$ in $[0,\frac{1}{m+n})$. Since each $c\in C$ is strictly preperiodic, we have $\theta_\lambda\in \mathbb{Q}$. In addition,
\[\begin{cases}
\psi_\lambda(C)=\{[\theta_\lambda+\frac{l}{m+n}]:0\leq l\leq m+n-1\},\\
\psi_\lambda(V_0)=\{[n^k(\theta_\lambda+\frac{l}{m+n})]:k\geq 1, 0\leq l\leq m+n-1\}.
\end{cases}\]

\begin{example}\label{example22}
(a). The first {image} in Figure \ref{fig1} is the Julia set associated to $R_{\lambda,2,1}$ with $\lambda\approx -0.0380422+0.42623i$. For this simple example, we have $\theta_\lambda=\frac{1}{12}$. Thus,
\[\psi_\lambda(C)=\{[\frac{1}{12}],[\frac{5}{12}],[\frac{3}{4}]\}, \text{ and }\psi_\lambda(V_0)=\{[0],[\frac{1}{6}],[\frac{1}{3}],[\frac{1}{2}],[\frac{2}{3}],[\frac{5}{6}]\}.\]

(b). The second {image} in Figure \ref{fig1} is the Julia set associated to $R_{\lambda,2,2}$ with $\lambda\approx -0.0196286-0.275378i$, for which we have $\theta_\lambda=\frac{3}{16}$ and 
\[\psi_\lambda(C)=\{[\frac{3}{16}],[\frac{7}{16}],[\frac{11}{16}],[\frac{15}{16}]\}, \text{ and }\psi_\lambda(V_0)=\{[0],[\frac{3}{8}],[\frac{7}{8}],[\frac{3}{4}],[\frac{1}{2}]\}.\]
In particular, this example shows that $V_0$ may not be rotational symmetric.
\end{example}

It is often useful to  consider smaller cells. Let's focus on a $1$-cell $F_iK_\lambda$, which is bounded by $F_i\beta_\lambda$. Clearly $F_iK_\lambda$ is disconnected into exactly $m+n$ components, if we remove $F_iC$. Another observation is that $\psi_\lambda(\beta_\lambda\cap F_i K_\lambda)$ is a closed arc of length $\frac{1}{m+n}$, which contains exactly $n$ points in $\psi_\lambda(F_iC)$. That means we have
\[\# F_iC\cap \beta_\lambda=n,\]
and thus there are exactly $m$ points in $F_iC\setminus \beta_\lambda$. With this  in mind, we can sketch the level-$2$ cells, as illustrated in Figure \ref{fig23}.

\begin{figure}[htp]
	\includegraphics[width=5cm,angle=10]{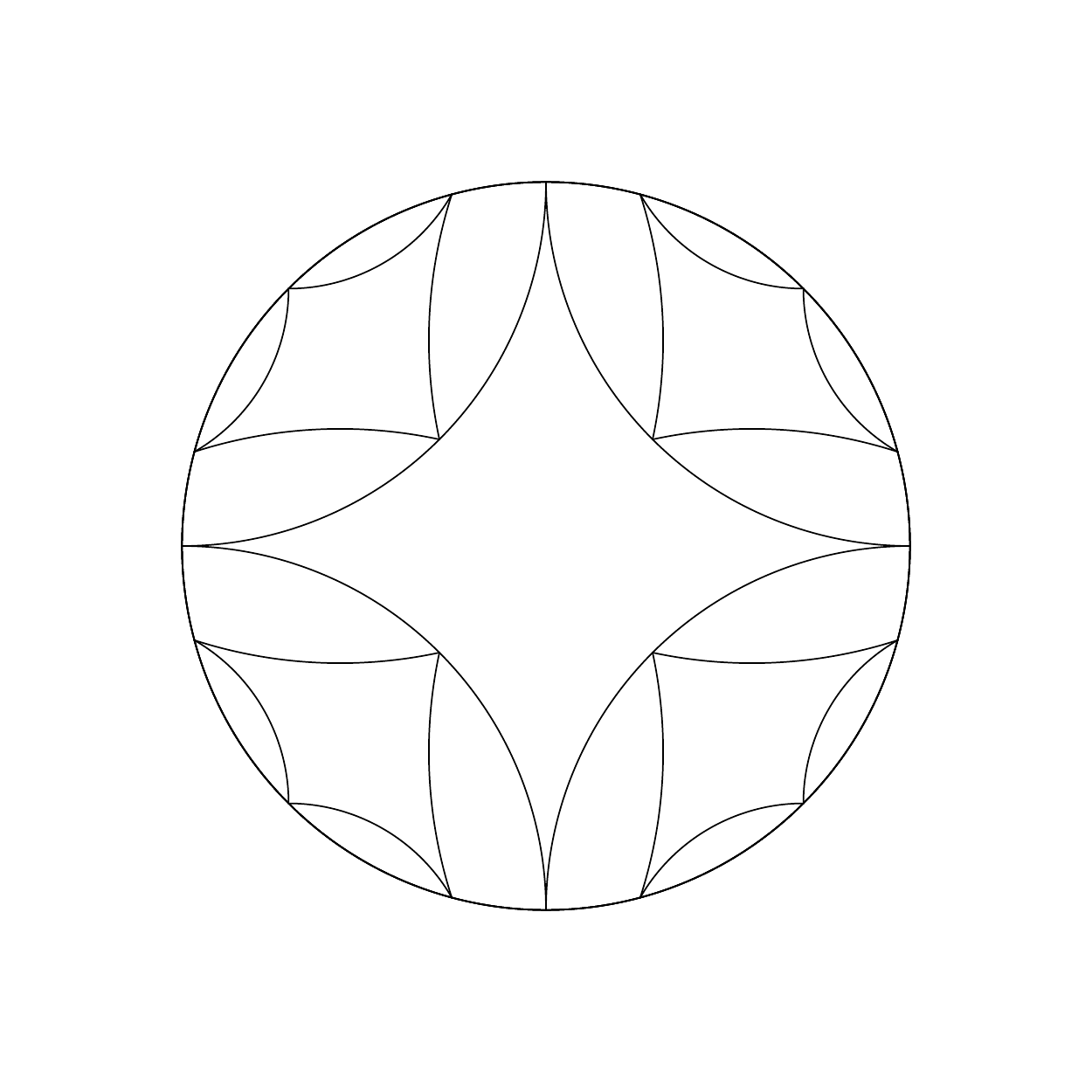}\qquad
	\includegraphics[width=5cm,angle=10]{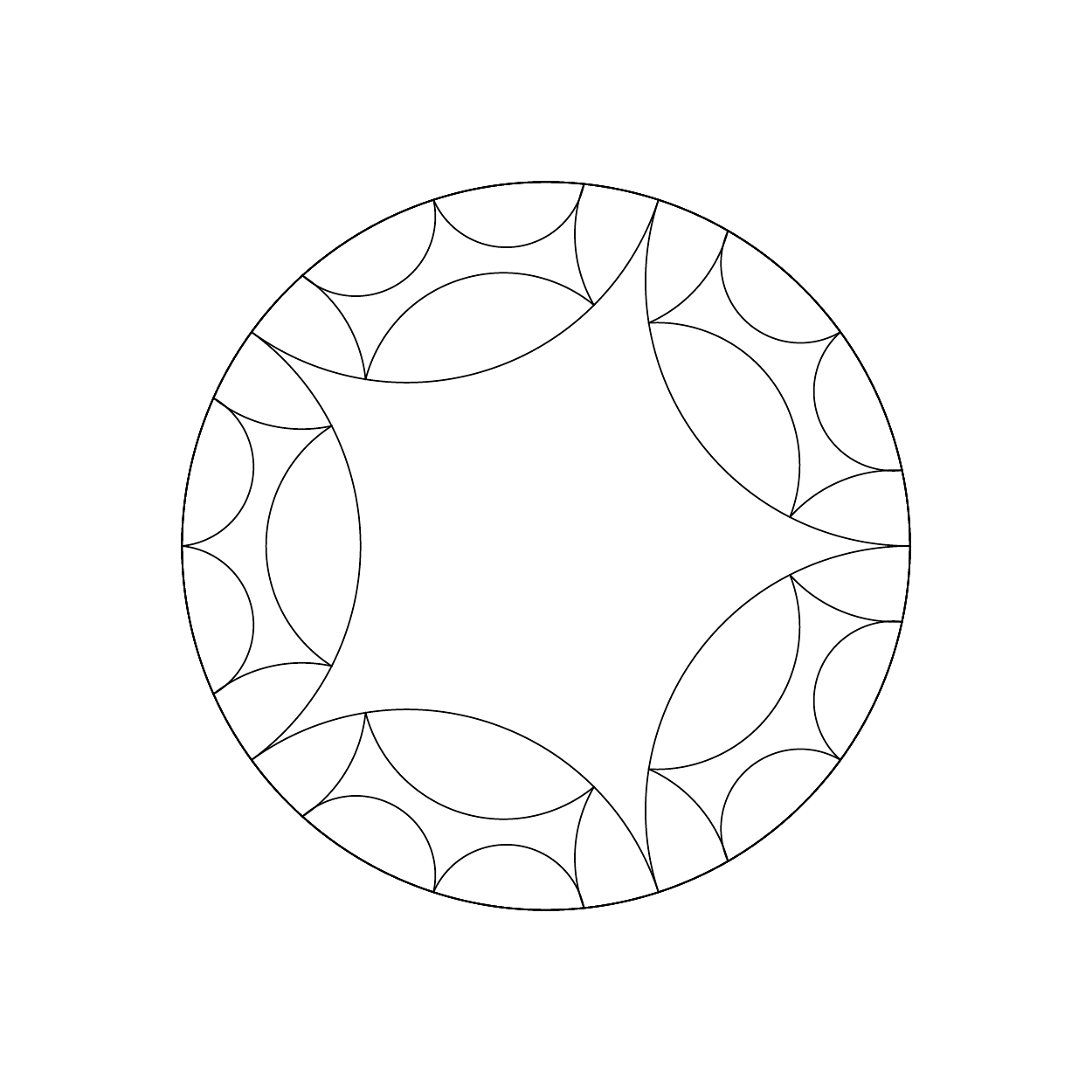}
	\caption{An illustration of the level-$2$ cells.}\label{fig23}
	\begin{picture}(0,0)
	\put(-123,33){$m=1, n=3$}
	\put(64,33){$m=2, n=3$}
    \end{picture}
\end{figure}

However, {to accurately sketch the} higher level cells, we need the exact value of $\theta_\lambda$. There could exist several cases, see Figure \ref{fig24}. In general, the exact {structure} can {depend in a complicated way on $\theta_\lambda$}. We end this section by enumerating some facts about Misiurewicz-Sierpinski parameters $\lambda$ and $\theta_\lambda$.

\begin{figure}[htp]\label{fig24}
	\includegraphics[width=5cm,angle=10]{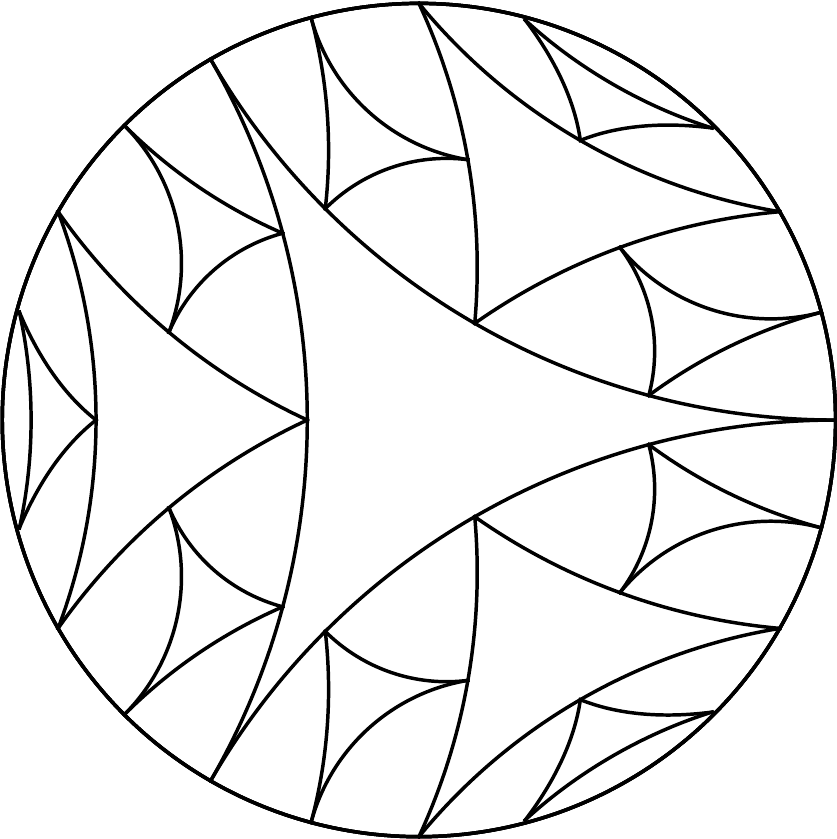}\qquad
	\includegraphics[width=5cm,angle=10]{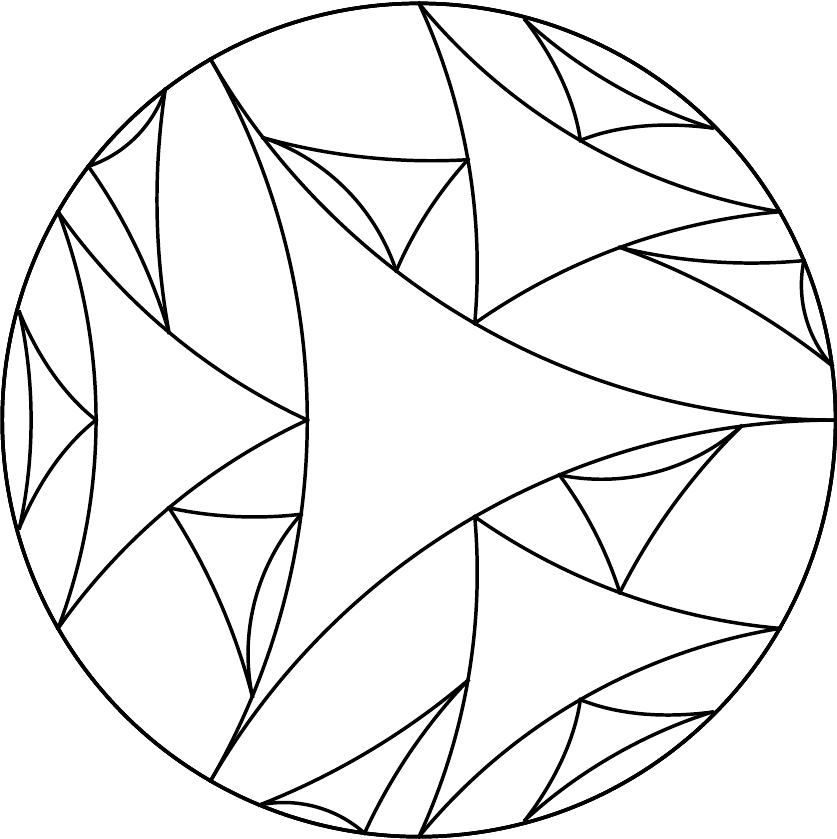}
	\caption{An illustration of the level-$3$ cells.}\label{fig24}
	\begin{picture}(0,0)
	\put(-141,33){$m=1, n=2,\theta_\lambda=\frac{1}{6}$}
	\put(46,33){$m=1, n=2,\theta_\lambda=\frac{1}{12}$}
	\end{picture}
\end{figure}
\vspace{0.15cm}

\noindent 1). (\cite{DRS,T}) The set of parameter values associated to MS maps is a dense subset {of} the boundary of the locus of connectedness of the family $R_{\lambda,n,m}$ (with fixed choice of $n,m$).

\noindent 2). (\cite{DRS}) For any two different MS maps $R_{\lambda,n,m}$ and $R_{\tau,n,m}$ with $\tau\notin \{\lambda e^{\frac{2k\pi i}{n-1}},\bar{\lambda}e^{\frac{2k\pi i}{n-1}}:1\leq k\leq n-1\}$, their respective Julia sets are not topologically equivalent.

\noindent 3). (\cite{QRWY}) For the $m=n$ case, there is a Misiurewicz-Sierpinski parameter $\lambda$ for any strictly  preperiodic $\theta_{\lambda}$ (under the dynamics of $\Phi_n$).

\section{Existence and uniqueness of balanced resistance forms}
In this section, we consider the existence and uniqueness of diffusions on  the Julia sets of Misiurewicz-Sierpinski maps. In \cite{ki3}, the concept of resistance forms {is} introduced, which in many cases describe local regular Dirichlet forms.

Let $X$ be a set, and $l(X)$ be the space of all real-valued functions on $X$. A pair $(\mathcal{E},\mathcal F)$ is called a \textit{non-degenerate resistance form} on $X$ if it satisfies the following  conditions:

\noindent (RF1). \textit{$\mathcal{F}$ is a linear subspace of $l(X)$ containing constants and $\mathcal{E}$ is a nonnegative symmetric quadratic form on $\mathcal F$; $\mathcal{E}(u):=\mathcal{E}(u,u)=0$ if and only if $u$ is constant on X.}

\noindent (RF2). \textit{Let $\sim$ be an equivalent relation on $\mathcal{F}$ defined by $u\sim v$ if and only if $u-v$ is constant on X. Then $(\mathcal{F}/\sim, \mathcal{E})$ is a Hilbert space.}

\noindent (RF3). \textit{For any finite subset $V\subset X$ and for any $v\in l(V)$, there exists $u\in \mathcal{F}$ such that $u|_V=v$.}

\noindent (RF4). \textit{For any $p,q\in X$, $r(p,q):=\sup\{\frac{|u(p)-u(q)|^2}{\mathcal{E}(u)}:u\in \mathcal{F},\mathcal{E}(u)>0\}$ is finite.}

\noindent (RF5). (Markov property) \textit{If $u\in \mathcal{F}$, then $\bar{u}={ \min\{\max\{u,0\}, 1\}}\in \mathcal{F}$ and $\mathcal{E}(\bar{u})\leq \mathcal{E}(u)$.}

Since each cell of the Julia set is a copy of itself under  compositions of homeomorphisms of the form $F_i$, {and the $1$-cells are all} the same size, a natural choice of resistance forms  {are} the \textit{balanced resistance forms}, {defined as follows.}

\begin{definition}
Let $R_{\lambda,n,m}$ be a MS map and {let} $K_\lambda$ be the corresponding Julia set. We say a resistance form $(\mathcal{E},\mathcal{F})$ on $K_\lambda$ is balanced if there exists a positive constant $\eta$ such that
\[\mathcal{E}(f)=\eta\sum_{i=1}^{m+n}\mathcal{E}(f\circ F_i),\quad \forall f\in\mathcal F.\]
\end{definition}

The main purpose of this paper is to prove the existence and uniqueness of a balanced resistance form on $K_\lambda$.

\begin{theorem}\label{thm32}
	Let $R_\lambda$ be a MS map, and $K_\lambda$ be the corresponding Julia set. Then there is a unique balanced resistance form $(\mathcal{E}_\lambda,\mathcal{F}_\lambda)$ on $K_\lambda$. In addition, the unique balanced resistance form has {rotational} symmetry,
	\[\mathcal{E}_\lambda(f(e^{\frac{2\pi i}{m+n}}\bullet))=\mathcal{E}_\lambda(f), \quad\forall f\in \mathcal F_\lambda.\]
\end{theorem}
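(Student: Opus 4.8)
The plan is to follow Sabot's program for nested fractals, adapted to the p.c.f. structure of $K_\lambda$ with equal weights. The key abstract tool (to be recalled in the first subsection of Section 3) is Sabot's theorem: on a finitely ramified self-similar structure, existence and uniqueness of a balanced (eigen)form reduce to a study of the renormalization map $\Lambda_\eta$ acting on the cone of resistance forms on the boundary set, together with a classification of its fixed points via \emph{preserved relations}. Concretely, one sets up the approximating graphs $V_n = \bigcup_{|w|=n} F_w(\mathcal P)$ with their network Laplacians, defines the trace (Schur complement) map $T$ that restricts a form on $V_1$ to $V_0$, and defines the renormalization operator $\mathcal R_\eta(A) = \eta\, T\!\left(\bigoplus_{i=1}^{m+n} A\circ F_i^{-1}\right)$ on the space of irreducible Dirichlet forms on $\ell(\mathcal P)$ modulo scaling. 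A balanced form corresponds to a fixed point of $\mathcal R_\eta$ for the appropriate $\eta$; existence is obtained from a degree/compactness argument on the projectivized cone, and uniqueness from the fact that distinct fixed points would force the existence of a non-trivial preserved $\mathcal G$-relation, where $\mathcal G$ is the symmetry group of the combinatorial structure.

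The main steps, in order: (1) Describe precisely the boundary set $\mathcal P = \Lambda^{-1}(V_0)$, the incidence of $1$-cells along the ring (using \eqref{eqn21}), and the finite intersection data $F_i K_\lambda \cap F_j K_\lambda$ — this combinatorial datum is exactly what feeds Sabot's machinery. (2) Identify the symmetry group $\mathcal G$ acting on $\mathcal P$: the rotation $e^{2\pi i/(m+n)}$ acts on $C$ and hence, lifting through $\Lambda$, gives a cyclic action of order $m+n$ on the cell structure and on $\mathcal P$; one must check it is a symmetry of the self-similar structure (it conjugates $F_i$ to $F_{i+1}$ in the cyclic labeling). Note that, unlike nested fractals, $\mathcal G$ need not act transitively on pairs of boundary points, and $V_0$ need not be $\mathcal G$-invariant in a stronger sense (Example \ref{example22}(b)), so one cannot invoke a naive fixed-point/averaging argument for existence. (3) Run Sabot's existence argument: show $\mathcal R_\eta$ has a fixed point for some $\eta>1$ using the non-degeneracy coming from the fact that removing $C$ disconnects $K_\lambda$ into exactly $m+n$ pieces (Proposition on the critical set), which guarantees the graphs are connected and the trace map is well-behaved; the constant $\eta>1$ reflects that the fractal is ``resistively larger'' than its cells. (4) Prove uniqueness by enumerating all preserved $\mathcal G$-relations: show the only $\mathcal G$-invariant linear relation compatible with the dynamics of $R_\lambda$ on $K_\lambda$ is the trivial one, and conclude via Sabot's criterion that the fixed point — hence the balanced form — is unique. (5) Deduce the rotational symmetry: since $\mathcal R_\eta$ commutes with the $\mathcal G$-action (because $\mathcal G$ permutes the $1$-cells and fixes the combinatorial data), the unique fixed point is automatically $\mathcal G$-invariant, which is precisely the stated identity $\mathcal E_\lambda(f(e^{2\pi i/(m+n)}\bullet)) = \mathcal E_\lambda(f)$.

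I expect the decisive obstacle to be step (4), the classification of non-trivial preserved $\mathcal G$-relations — indeed the introduction flags this as ``one main difficulty.'' On nested fractals the large symmetry group kills most candidate relations automatically; here the symmetry is only cyclic of order $m+n$, so there is genuine room for exotic preserved relations, and ruling them out requires exploiting the specific dynamics of $R_{\lambda,n,m}$ on $\beta_\lambda$ — in particular the conjugacy $\psi_\lambda \circ R_\lambda = \Phi_n \circ \psi_\lambda$ and the explicit description of $\psi_\lambda(C)$ and $\psi_\lambda(V_0)$ in terms of $\theta_\lambda \in \mathbb Q$. The argument will likely split into an analysis of how a hypothetical preserved relation must restrict to each $1$-cell $F_i K_\lambda$, use that each $F_i$ maps $K_{\lambda,i}$ homeomorphically onto all of $K_\lambda$ (so the relation is ``self-reproducing'' under $R_\lambda$), and then derive a contradiction from the arithmetic of $n\theta$ on $\mathbb T$, possibly with finitely many cases according to the residue structure of $\theta_\lambda$. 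A secondary technical point is verifying the hypotheses of Sabot's theorem hold despite $V_0$ not being rotationally symmetric in general, which I would handle by checking that only the cell-incidence data (not the internal geometry of $V_0$) enters the renormalization operator.
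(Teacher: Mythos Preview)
Your outline has the right skeleton but contains two genuine gaps, one of which is fatal to step~(4) as stated.

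\textbf{Gap 1: non-trivial preserved $\mathcal G$-relations do exist when $m=1$.} Your step~(4) asserts that ``the only $\mathcal G$-invariant \ldots\ relation \ldots\ is the trivial one''. This is false: for $m=1$ the paper exhibits an explicit non-trivial preserved $\mathcal G$-relation (Example~\ref{example318}), and in fact classifies them (Lemma~\ref{lemma321}) as exactly two candidates $\mathcal J_+,\mathcal J_-$, each with $n+1$ equivalence classes. So for $m=1$ one cannot conclude by vacuity; the paper instead verifies Sabot's inequality $\overline\rho^{\mathcal G}_{\mathcal J}\le 1<1+\tfrac1n\le\underline\rho^{\mathcal G}_{V_0/\mathcal J}$ directly for these relations (Proposition~\ref{prop323}). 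The case split $m\ge 2$ versus $m=1$ is essential: only for $m\ge 2$ does the distance argument on $\beta_\lambda$ (Lemma~\ref{lemma316}) rule out all non-trivial $\mathcal G$-relations.

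\textbf{Gap 2: your uniqueness argument is circular.} Sabot's theorem applied with the group $\mathcal G$ yields uniqueness \emph{among $\mathcal G$-symmetric solutions only}. Your step~(5) then says ``the unique fixed point is automatically $\mathcal G$-invariant'', but you have not established uniqueness among \emph{all} balanced forms, so there is nothing preventing an asymmetric fixed point coexisting with the symmetric one. The paper separates these two problems: Section~3.2 uses $\mathcal G$-relations to produce one $\mathcal G$-symmetric solution $\mathcal D$; Section~3.3 then takes this $\mathcal D$ as given and, for an \emph{arbitrary} (not $\mathcal G$-invariant) preserved relation $\mathcal J$, proves $\overline\rho_{\mathcal J,k}<\eta^{-k}\le\underline\rho_{V_0/\mathcal J,k}$ for some $k$. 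The key new ingredient you are missing is the ``flow'' (normal-derivative) argument of Lemma~\ref{lemma332}: one shows that for any non-constant $f$ supported on a single $\mathcal J$-class, the harmonic extension $h_f$ must, after descending into a suitable cell $F_wK_\lambda$, have nonzero flow through \emph{every} critical point --- contradicting Lemma~\ref{lemma312}, which forces $C_{h_f\circ F_w}\subsetneq C$. This gives the strict inequality $\eta^k T^k_{\mathcal J}\mathcal D_{\mathcal J}(f)<\mathcal D_{\mathcal J}(f)$. Only after this unrestricted uniqueness does rotational symmetry follow.
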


In addition, by using well established results in \cite{ki3}, we can easily see {the following result}. 
\begin{theorem}\label{thm33}
Let $R_\lambda$ be a MS map, $K_\lambda$ be the corresponding Julia set, and $(\mathcal{E}_\lambda,\mathcal{F}_\lambda)$ be the corresponding balanced resistance form. Let $\mu$ be a Radon measure on $K_\lambda$. Then $(\mathcal{E}_\lambda,\mathcal{F}_\lambda)$ becomes a local regular Dirichlet form on $L^2(K_\lambda,\mu)$.
\end{theorem}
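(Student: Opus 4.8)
The plan is to deduce Theorem~\ref{thm33} from Theorem~\ref{thm32} together with the general machinery for resistance forms developed in \cite{ki3}. The starting point is that Theorem~\ref{thm32} already produces a non-degenerate resistance form $(\mathcal E_\lambda,\mathcal F_\lambda)$ on the set $K_\lambda$, which by (RF4) carries an associated resistance metric $r(\cdot,\cdot)$. First I would verify that the topology induced by $r$ on $K_\lambda$ coincides with the original (Euclidean subspace) topology, so that $K_\lambda$ is compact in the resistance metric. This is where the self-similar/balanced structure enters: since $\mathcal E_\lambda(f)=\eta\sum_i \mathcal E_\lambda(f\circ F_i)$ with $\eta>1$, the resistance across an $k$-cell $F_\omega K_\lambda$ scales like $\eta^{-k}$, and because $\diam(F_\omega K_\lambda)\to 0$ uniformly as $|\omega|\to\infty$ (Proposition~3.6 of \cite{DRS}, quoted in Section~\ref{sec2}), the resistance-metric balls and Euclidean balls are comparable in the sense needed for the two topologies to agree. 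In particular $(K_\lambda,r)$ is compact, hence separable and complete.

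Next I would invoke the correspondence theorem of Kigami (see \cite{ki3}, the theorem identifying resistance forms on a set with regular Dirichlet forms on an $L^2$-space): given a resistance form $(\mathcal E,\mathcal F)$ on $X$ whose resistance metric induces a topology making $X$ locally compact, separable and metrizable, and given any Radon measure $\mu$ on $X$ with full support (more precisely, charging every nonempty open set — and here one should note that on a self-similar set any Radon measure of full support works, while for general Radon $\mu$ one restricts to $\mathrm{supp}\,\mu$), the form $(\mathcal E,\mathcal F\cap L^2(X,\mu))$ is a regular Dirichlet form on $L^2(X,\mu)$. The Markov property (RF5) of $(\mathcal E_\lambda,\mathcal F_\lambda)$ supplies exactly the Markov property of the resulting Dirichlet form, and regularity follows because, $K_\lambda$ being compact in the resistance metric, $\mathcal F_\lambda\subset C(K_\lambda)$ and $\mathcal F_\lambda$ is dense both in $C(K_\lambda)$ (uniform norm) and in $L^2(K_\lambda,\mu)$; the density in $C(K_\lambda)$ uses that $\mathcal F_\lambda$ separates points (again via (RF4) and finiteness of resistances) together with a Stone–Weierstrass argument, or directly Kigami's result that $\mathcal F_\lambda$ restricted to the dense set $\bigcup_m V_m$ of vertices is all of $\ell(V_m)$ on each level.

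It then remains to argue locality. Here I would use the standard criterion: a regular Dirichlet form is local iff $\mathcal E(u,v)=0$ whenever $u,v\in\mathcal F$ have disjoint (compact) supports, and for forms coming from resistance forms on finitely ramified self-similar sets this is immediate from the self-similar decomposition $\mathcal E_\lambda(u,v)=\eta^m\sum_{|\omega|=m}\mathcal E_\lambda(u\circ F_\omega,v\circ F_\omega)$: if $\mathrm{supp}\,u\cap\mathrm{supp}\,v=\emptyset$ then for $m$ large every $m$-cell meets at most one of the supports, so every summand vanishes. Alternatively one cites that every resistance form on a compact set, viewed as a Dirichlet form, has no killing and no jumping part because the diagonal energy measure charges no point and the form is a strongly local in the sense guaranteed by Kigami's construction — but the cell-decomposition argument is cleaner and self-contained. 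The only genuinely delicate point, and the one I would expect to spend the most care on, is the identification of the resistance topology with the Euclidean topology on $K_\lambda$: one must check that a point cannot be "at finite resistance but topologically far," which in the finitely ramified p.c.f. setting reduces to showing that cells shrink in resistance metric, a consequence of $\eta>1$ and the uniform diameter decay; everything downstream is then a direct appeal to \cite{ki3}.
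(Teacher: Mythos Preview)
Your proposal is correct and is essentially the same approach as the paper's, which in fact gives no proof at all beyond the single sentence ``by using well established results in \cite{ki3}, we can easily see the following result.'' What you have written is a faithful and accurate unpacking of what that citation entails: identifying the resistance and Euclidean topologies via the scaling $\eta>1$ and the cell-diameter decay, invoking Kigami's theorem that a resistance form on a compact metric space becomes a regular Dirichlet form with respect to any Radon measure, and checking locality through the self-similar decomposition. Your parenthetical caveat about the Radon measure needing full support (or else restricting to $\mathrm{supp}\,\mu$) is a genuine technical point that the paper's statement glosses over.
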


\subsection{The Theorem of Sabot}\label{sec31}
{We first demonstrate that the} problem of finding a balanced resistance form (or more generally, a self-similar resistance form) can be transferred to a nonlinear fixed point problem on a finitely dimensional space.

\vspace{0.2cm}
\noindent\textbf{Notation.}
	Let $X$ be a set equipped with a resistance form $(\mathcal{E},\mathcal{F})$, and $V\subset X$ be a subset of finitely many points. We define the \textit{restriction} of $(\mathcal E, \mathcal F)$ to $V$ by
	\[\mathcal{E}|_V(f)=\inf\{\mathcal{E}(\tilde{f}): \tilde{f}|_V=f,\tilde{f}\in \mathcal{F}\},\quad \forall f\in l(V).\]
	Note that $(\mathcal E|_V, l(V))$ is a resistance form on $V$ by the polarization identity. For $f\in l(V)$, we denote  the unique (\textit{harmonic}) \textit{extension} of $f$ with minimal energy by $H_{\mathcal{E},V}f$,  so that
	\[\mathcal{E}(H_{\mathcal{E},V}f)=\mathcal{E}|_V(f).\]
	Though we do not highlight $\mathcal{F}$ in the above notations,  the constructions depend on both $\mathcal{E}$ and $\mathcal{F}$.
\vspace{0.2cm}

Now, {we} assume that $(\mathcal{E},\mathcal{F})$ is a balanced resistance form on the Julia set $K_\lambda$. For each $f\in l(V_1)$, where $V_1=\bigcup_{i=1}^{m+n}F_i(V_0)$, we have
{\[\mathcal{E}|_{V_1}(f)=\mathcal{E}(H_{\mathcal{E},V_1}f)=\eta\sum_{i=1}^{m+n}\mathcal{E}\big((H_{\mathcal{E},V_1}f)\circ F_i\big)\geq \eta\sum_{i=1}^{m+n}\mathcal{E}\big(H_{\mathcal{E},V_0}(f\circ F_i)\big)=\eta\sum_{i=1}^{m+n}\mathcal{E}|_{V_0}(f\circ F_i).\]
The other direction inequality also holds,
\[\mathcal{E}|_{V_1}(f)=\mathcal{E}(H_{\mathcal{E},V_1}f)\leq \mathcal{E}(g)=\eta\sum_{i=1}^{m+n}\mathcal{E}\big(H_{\mathcal{E},V_0}(f\circ F_i)\big)=\eta\sum_{i=1}^{m+n}\mathcal{E}|_{V_0}(f\circ F_i).\]
where $g$ is the extension of $f$ from $l(V_1)$ to $\mathcal{F}$ such that $g\circ F_i=H_{\mathcal{E},V_0}(f\circ F_i)$. Thus we have
 $(H_{\mathcal{E},V_1}f)\circ F_i=H_{\mathcal{E},V_0}(f\circ F_i)$, and
\begin{equation}\label{eqn31}
\mathcal{E}|_{V_1}(f)=\eta\sum_{i=1}^{m+n}\mathcal{E}|_{V_0}(f\circ F_i).
\end{equation}
Notice that the equation $\big(\mathcal{E}|_{V_1}\big)|_{V_0}=\mathcal{E}|_{V_0}$ always holds, so {that} (\ref{eqn31}) becomes an identity of $\mathcal{E}|_{V_0}$.

On the other hand, if there exists a resistance form $\mathcal{D}$ on $V_0$ (we omit the domain $l(V_0)$ hereafter for simplicity) such that
\begin{equation}\label{eqn32}
T\mathcal{D}:=\mathcal{D}^{(1)}|_{V_0}=\eta^{-1}\mathcal{D},
\end{equation}
where $\mathcal{D}^{(1)}$ is the resistance form on $V_1$ defined by $\mathcal{D}^{(1)}(f)=\sum_{i=1}^{m+n}\mathcal{D}(f\circ F_i)$, and $\eta$ is a positive constant, then there exists a unique balanced resistance  form $(\mathcal{E},\mathcal{F})$ on $K_\lambda$ such that $\mathcal{D}=\mathcal{E}|_{V_0}$.

\vspace{0.15cm}
\noindent\textbf{Remark.}
	For the {purpose} of using rotation{al} symmetry {later}, we {sometimes} enlarge $V_0$ to $\tilde{V}_0:=\bigcup_{l=0}^{m+n-1} e^{\frac{2l\pi  i}{m+n}}V_0$ and consider $\mathcal{D}$ on $\tilde{V}_0$ instead. Clearly, $\tilde{V}_1:=\bigcup_{i=1}^{m+n}F_i\tilde{V}_0=R_\lambda^{-1}\tilde{V}_0$ is also rotationally symmetric. {We} note that $\tilde V_0=V_0$ when $m+n$ and $n$ are coprime.
	\vspace{0.15cm}

From equation (\ref{eqn32}), the problem becomes a fixed point problem of the map  $T$ on the projective space of resistance forms (Dirichlet forms) on $V_0$ (or $\tilde V_0$). {This problem} is of fundamental importance in the study of diffusions on finitely ramified fractals. A famous and pioneering work on the existence {of a solution} was written by Lindstr\o m for nested fractals \cite{Lindstrom}, which are a class of highly symmetric p.c.f. fractals. Later, the problem of uniqueness for nested fractals was solved by Sabot in his celebrated work \cite{Sabot}. Moreover, Sabot raised a general theorem on both the existence (also non-existence) and the uniqueness of $\mathcal D$ to the solution of (\ref{eqn32}).

We will utilize Sabot's theorem in our situation. Let's introduce some definitions from \cite{Sabot}.

Consider a general p.c.f. fractal $K$, associated with an i.f.s. $\{F_i\}_{i=1}^N$. Let $V_0=\Lambda(\mathcal{P})$ be the {set of} boundary vertices, and $V_1=\bigcup_{i=1}^N F_iV_0$. Assume that $\mathcal G$ is a finite group of homeomorphisms $K\to K$, such that $g(V_0)=V_0, \forall g\in \mathcal G$. In addition, we require that for any $g\in \mathcal G$ and $1\leq i\leq N$, there exists $g'\in \mathcal G$ and $1\leq i'\leq N$  such that
	\[g\circ F_i=F_{i'}\circ g'.\]

\begin{definition}\label{def35}
	Let $\mathcal{J}$ be an equivalence relation on $V_0$.
	
	(a). We define $\mathcal{J}^{(1)}$  to be the smallest equivalence relation on $V_1$ such that
	\[x \mathcal{J} y\Longrightarrow F_i(x) \mathcal{J}^{(1)}F_i(y), \quad 1\leq i\leq N.\]
	
	(b). We call $\mathcal{J}$ a preserved relation if for any $x,y \in V_0$,
	\[x \mathcal{J} y\Longleftrightarrow x \mathcal{J}^{(1)}y.\]
	In addition, if
	\[x \mathcal{J} y\Longrightarrow g(x) \mathcal{J} g(y),\quad \forall g\in \mathcal G,\]
	we call $\mathcal{J}$ a preserved $\mathcal G$-relation.
\end{definition}

\noindent\textbf{Remark.} We say that $\mathcal J$ is \textit{non-trivial} if $\mathcal J$ is neither the full relation  ($x\mathcal J y$, $\forall x,y\in V_0$), denoted by $\mcJ=1$, nor the null relation ($x\mcJ\mkern-10.5mu\backslash y$ if $x\neq y$), denoted by $\mcJ=0$.
\vspace{0.15cm}

A resistance form on a finite set $V$ can always be written as
\begin{equation}\label{eqn33}
\mathcal{D}(f)=\sum_{x\neq y}j_{x,y}(f(x)-f(y))^2,
\end{equation}
with nonnegative constants $j_{x,y}$. Noticing that by definition, a resistance form is non-degenerate, i.e. we always have $\mathcal{D}(f)=0$ if and only if $f$ is a constant function. There are also \textit{degenerate forms} of the form (\ref{eqn33}) whose kernel {is} larger than the space of constant functions. Clearly, $\mathcal D$ is non-degenerate if and only if the matrix $(j_{x,y})_{x,y\in V}$ is irreducible.

\begin{definition}
	Let $\mathcal J$ be a preserved relation on $V_0$, and $\mathcal{D}$ be a form of the form (\ref{eqn33}).
	
	(a1). We say $\mathcal{D}\in \mathcal{M}_\mcJ$ if
	\[\mathcal{D}(f)=0\Longleftrightarrow f\text{ is constant on each equivalence class of }\mathcal{J}.\]
	
	(a2). We define $T_\mcJ:\mathcal{M}_\mcJ\to \mathcal{M}_\mcJ$ as follows,
	\[T_\mcJ\mathcal{D}(f)=\inf\{\mathcal{D}^{(1)}(\tilde{f}):\tilde{f}=f \text{ on }V_0, \tilde{f}\in l(V_1)\}, \]
	where $\mathcal{D}^{(1)}({\tilde{f}})=\sum_{i=1}^{m+n}\mathcal{D}({\tilde{f}}\circ F_i)$.
	
	(b1). Let $\mathcal{M}_{V_0/\mcJ}$ be the space of resistance forms on $V_0/\mcJ$. We identify $l(V_0/\mcJ)$ with the subspace of $l(V_0)$ where each $f$ admits constant values on each equivalence class of $\mcJ$. Then, for each resistance form $\mathcal{D}$ on $V_0$, we can naturally define
	\[\mcD_{V_0/\mcJ}(f)=\mcD(f), \quad \forall f\in l(V_0/\mcJ).\]
	Conversely, any form in $\mathcal{M}_{V_0/\mcJ}$ can be constructed in this way.
	
	(b2). We define $T_{V_0/\mcJ}:\mathcal{M}_{V_0/\mcJ}\to \mathcal{M}_{V_0/\mcJ}$ as follows,
	\[T_{V_0/\mcJ}\mathcal{D}_{V_0/\mcJ}(f)=\inf\{\mathcal{D}^{(1)}(\tilde{f}):\tilde{f}=f \text{ on }V_0, \tilde{f}\in l(V_1/\mcJ^{(1)})\}, \]
	where $\mathcal{D}^{(1)}({\tilde{f}})=\sum_{i=1}^{m+n}\mathcal{D}({\tilde{f}}\circ F_i)$.
\end{definition}

\begin{definition}
Let $\mcJ$ be a preserved relation on $V_0$.

(a). We define
\[\underline{\rho}_\mcJ(\mcD)=\inf_{f\in l(V_0)\setminus l(V_0/\mcJ)}\frac{T_\mcJ\mcD(f)}{\mcD(f)},\quad \overline{\rho}_\mcJ(\mcD)=\sup_{f\in l(V_0)\setminus l(V_0/\mcJ)}\frac{T_\mcJ\mcD(f)}{\mcD(f)}, \text{ for }\mcD\in \mathcal{M}_\mcJ,\]
and
\[\underline{\rho}_{V_0/\mcJ}(\mcD)=\inf_{f\in l(V_0/\mcJ)}\frac{T_{V_0/\mcJ}\mcD(f)}{\mcD(f)},\quad \overline{\rho}_{V_0/\mcJ}(\mcD)=\sup_{f\in l(V_0/\mcJ)}\frac{T_{V_0/\mcJ}\mcD(f)}{\mcD(f)}, \text{ for }\mcD\in \mathcal{M}_{V_0/\mcJ}.\]

(b). We define
\[\begin{aligned}
\underline{\rho}_\mcJ=\sup_{\mcD\in \mathcal{M}_\mcJ}\underline{\rho}_\mcJ(\mcD),&\quad \overline{\rho}_\mcJ=\inf_{\mcD\in \mathcal{M}_\mcJ}\overline{\rho}_\mcJ(\mcD),\\
\underline{\rho}_{V_0/\mcJ}=\sup_{\mcD\in \mathcal{M}_{V_0/\mcJ}}\underline{\rho}_{V_0/\mcJ}(\mcD),&\quad \overline{\rho}_{V_0/\mcJ}=\inf_{\mcD\in \mathcal{M}_{V_0/\mcJ}}\overline{\rho}_{V_0/\mcJ}(\mcD).
\end{aligned}\]

(c). If $\mcJ$ is in addition a preserved $\mathcal G$-relation, we define  $\underline{\rho}^{\mathcal G}_\mcJ=\sup_{\mcD}\underline{\rho}_\mcJ(\mcD)$ where the supremum is taken over $\mathcal G$-symmetric forms in $\mathcal{M}_\mcJ$. In particular, when $\mathcal G$ is taken to be the trivial group, i.e. $\mathcal G=\{id\}$, then $\underline{\rho}^{\mathcal G}_{\mcJ}=\underline{\rho}_{\mcJ}$.

$\overline{\rho}^{\mathcal G}_\mcJ$, $\underline{\rho}^{\mathcal G}_{V_0/\mcJ}$ and $\overline{\rho}^{\mathcal G}_{V_0/\mcJ}$ are defined in a same way.
\end{definition}

We now quote the theorem of Sabot which will serve as our main {instrument} for proving {the} existence and uniqueness.

\begin{theorem}[\cite{Sabot}]\label{thm38}
(a). If there exist two non-trivial preserved $\mathcal G$-relations $\mcJ$ and $\mcJ'$ on $V_0$, such that $\underline\rho^{\mathcal G}_{V_0/\mcJ}<\underline\rho^{\mathcal G}_{\mcJ'}$, then (\ref{eqn32}) does not have a $\mathcal G$-symmetric solution.

(b). If for all non-trivial preserved $\mathcal G$-relation $\mcJ$, it holds that $\overline{\rho}^{\mathcal G}_\mcJ<\underline{\rho}^{\mathcal G}_{V_0/\mcJ}$, then (\ref{eqn32}) has at most one $\mathcal G$-symmetric solution (up to a multiplicative constant). If moreover, there do not exist two strictly ordered non-trivial $\mathcal G$-relation{s} (i.e. $\mcJ\subset \mcJ'$ and $\mcJ\neq \mcJ'$), then we have exactly one $\mathcal G$-symmetric solution to (\ref{eqn32}).
\end{theorem}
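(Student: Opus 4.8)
The plan is to realize the renormalization map $T$ of \eqref{eqn32} as a dynamical system on the projective cone of Dirichlet forms on $V_0$ and to read off existence, non-existence and uniqueness from the geometry of that cone near its (stratified) boundary; this is the strategy of \cite{Sabot}. First I would equip the cone $\mathcal{M}$ of forms of type \eqref{eqn33}, modulo positive scaling, with Hilbert's projective metric, and record two structural facts about $T$. (i) $T$ factors as the linear, monotone map $\mcD\mapsto \mcD^{(1)}$ followed by the trace (restriction) map $V_1\to V_0$, which is operator-monotone and operator-concave; hence $T$ is non-expansive for the Hilbert metric and is strictly contracting on the open cone $\mathcal{M}_1$ of non-degenerate forms. (ii) The faces of $\overline{\mathcal{M}_1}$ are indexed by equivalence relations $\mcJ$ on $V_0$ (the face being $\mathcal{M}_\mcJ$), and $T$ maps into itself exactly those faces coming from preserved $\mathcal{G}$-relations, where it restricts to $T_\mcJ$, and to $T_{V_0/\mcJ}$ after passing to the quotient. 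This is why preserved relations are the right objects: they are the $T$-invariant strata of the boundary, and a $\mathcal{G}$-symmetric fixed point of $T$ either lies in $\mathcal{M}_1$ — which is what we want — or on such a stratum.

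Next I would carry out the local analysis at a boundary face $\mathcal{M}_\mcJ$: linearize $T$ in the directions transverse to the face (forms that ``connect'' distinct $\mcJ$-classes) and in the directions tangent to it, and identify the optimal transverse expansion/contraction rate with $\underline{\rho}^{\mathcal{G}}_\mcJ$, $\overline{\rho}^{\mathcal{G}}_\mcJ$ and the tangential rate with $\underline{\rho}^{\mathcal{G}}_{V_0/\mcJ}$, $\overline{\rho}^{\mathcal{G}}_{V_0/\mcJ}$. The outcome is a dichotomy: if $\overline{\rho}^{\mathcal{G}}_\mcJ<\underline{\rho}^{\mathcal{G}}_{V_0/\mcJ}$ then transverse perturbations shrink relative to tangential ones, so no orbit of $T$ can converge to $\mathcal{M}_\mcJ$ and no $\mathcal{G}$-symmetric fixed point can live on it; whereas if two non-trivial preserved $\mathcal{G}$-relations satisfy $\underline{\rho}^{\mathcal{G}}_{V_0/\mcJ}<\underline{\rho}^{\mathcal{G}}_{\mcJ'}$, then the two faces impose incompatible scaling multipliers on any putative fixed point.

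From here the three conclusions follow. For (a), argue by contradiction: a $\mathcal{G}$-symmetric solution $\mcD$ of \eqref{eqn32} carries the single multiplier $\eta^{-1}$ on all of $l(V_0)$; sandwiching $\mcD$ between the forms that nearly optimize $\underline{\rho}^{\mathcal{G}}_{\mcJ'}$ and $\underline{\rho}^{\mathcal{G}}_{V_0/\mcJ}$ and using monotonicity of $T$ yields $\underline{\rho}^{\mathcal{G}}_{\mcJ'}\le \eta^{-1}\le \underline{\rho}^{\mathcal{G}}_{V_0/\mcJ}$, contradicting the hypothesis. For uniqueness in (b), if $\mcD_1,\mcD_2$ are two $\mathcal{G}$-symmetric solutions, the face dichotomy applied to every non-trivial $\mcJ$ forces both to lie in the open cone $\mathcal{M}_1$, where the strict Hilbert-metric contraction of $T$ forces $\mcD_1\propto \mcD_2$. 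For the existence clause, iterate $T$ from an arbitrary non-degenerate $\mathcal{G}$-symmetric form (normalizing at each step): the same dichotomy shows the projective orbit cannot approach any boundary face, so it stays in a compact part of $\mathcal{M}_1$, a subsequential limit is an interior fixed point, and the extra hypothesis forbidding two strictly ordered non-trivial $\mathcal{G}$-relations removes the last degenerate obstruction, leaving exactly one solution of \eqref{eqn32} up to scaling.

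The main obstacle is the transverse linearization at $\mathcal{M}_\mcJ$: one must show rigorously that the variational ratios defining $\underline{\rho}_\mcJ,\overline{\rho}_\mcJ,\underline{\rho}_{V_0/\mcJ},\overline{\rho}_{V_0/\mcJ}$ really are the asymptotic expansion/contraction rates of the nonlinear map $T$ in the two families of directions, and uniformly enough to furnish the ``no escape to the boundary'' estimate used in both the uniqueness and the existence arguments. This uniform control near the entire stratified boundary is the technical heart of \cite{Sabot}, and in the present paper it is this black box, rather than anything we reprove, that we invoke.
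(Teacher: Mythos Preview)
The paper does not prove Theorem~\ref{thm38}; it is quoted from \cite{Sabot} and used as a black box (as you yourself note in your last paragraph). So there is no ``paper's own proof'' to compare against, and your sketch is really an outline of Sabot's original argument.

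Your broad picture is right: Hilbert's projective metric on the cone of forms, the stratification of the boundary of $\mathcal{M}_1$ by preserved relations, and the identification of $\underline{\rho}_\mcJ,\overline{\rho}_\mcJ,\underline{\rho}_{V_0/\mcJ},\overline{\rho}_{V_0/\mcJ}$ with transverse/tangential rates near the face $\mathcal{M}_\mcJ$. But there is a genuine gap in your uniqueness argument. You assert that $T$ ``is strictly contracting on the open cone $\mathcal{M}_1$ of non-degenerate forms,'' and then deduce uniqueness in (b) by saying that once both solutions lie in $\mathcal{M}_1$, strict contraction forces them to be proportional. This is false in general: operator-concavity of the trace map gives non-expansion, not strict contraction, and indeed the Vicsek set (see \cite{M1,Sabot}, and the remark in the introduction of the present paper) admits infinitely many non-proportional balanced forms, all sitting in $\mathcal{M}_1$. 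If $T$ were strictly contracting on $\mathcal{M}_1$, no hypotheses on preserved relations would be needed for uniqueness.

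Sabot's actual mechanism is different. Given two $\mathcal G$-symmetric solutions $\mcD_1,\mcD_2\in\mathcal{M}_1$, one does not invoke a global contraction; instead one looks at the extremal ratio $\sup_f \mcD_1(f)/\mcD_2(f)$ and shows that the functions realizing it are constant on the classes of some preserved $\mathcal G$-relation $\mcJ$. The hypothesis $\overline{\rho}^{\mathcal G}_\mcJ<\underline{\rho}^{\mathcal G}_{V_0/\mcJ}$ then forces this $\mcJ$ to be trivial, which collapses $\mcD_1$ and $\mcD_2$ to a common ray. In other words, the preserved-relation conditions are not merely ruling out boundary fixed points so that a pre-existing interior contraction can take over; they are the substance of the uniqueness argument itself (this is the content of Lemma~5.7 in \cite{Sabot}, which the remark after Theorem~\ref{thm38} singles out). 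Your sketch of (a) and of the existence clause in (b) is closer to the mark.
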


\noindent\textbf{Remark.} For the uniqueness part, the inequality in Theorem \ref{thm38} can be loosed to \[\overline{\rho}_{\mcJ,k}^{\mathcal G}<\underline{\rho}_{V_0/\mcJ,k}^{\mathcal G}.\]
Here $\overline{\rho}_{\mcJ,k}^{\mathcal G}=\inf\limits_{\mcD\in \mathcal{M}_\mcJ}\sup\limits_{f\in l(V_0)\setminus l(V_0/\mcJ)}\frac{T^k_\mcJ\mcD(f)}{\mcD(f)}$ and $\underline{\rho}_{V_0/\mcJ,k}^{\mathcal G}=\sup\limits_{\mcD\in \mathcal{M}_{V_0/\mcJ}}\inf\limits_{f\in l(V_0/\mcJ)}\frac{T^k_{V_0/\mcJ}\mcD(f)}{\mcD(f)}$, where the supremum and infimum are taken over $\mathcal G$-symmetric forms. Indeed, readers can revise Lemma 5.7 in Sabot's paper \cite{Sabot} with this new assumption. The rest of the proof of the uniqueness in Section 5.4 {of} \cite{Sabot} then follows in the same way.

\subsection{Proof of existence} We return to the study of {the} Julia sets $K_\lambda$ associated with {the} MS maps of the form $R_\lambda(z)=z^n+\frac{\lambda}{z^m}$ with $n\geq 2, m\geq 1$ and $\lambda\in \mathbb{C}$. In this subsection, we will prove the existence of a balanced resistance form on $K_\lambda$. By the discussion in Section \ref{sec31}, it is enough to study the equation (\ref{eqn32}) by applying Sabot's theorem. However, since the fractal  can depend in a complicated manner on $R_{\lambda}$, we must be careful in verifying the conditions in Theorem \ref{thm38}.

Throughout this subsection, we will let $\mathcal G$ be the canonical rotation group on $K_\lambda$, that is $\mathcal G=\{g: g(x)=e^{\frac{2l\pi  i}{m+n}}x, 0\leq l\leq m+n-1\}$. We first note several properties of preserved relations on $V_0$.

\begin{definition}\label{def39}
 Let $G=(V,E)$ and $G'=(V',E')$ be two finite graphs, and $f: V\to V'$.

 (a). We define $f(G)=(f(V),f(E))$ to be the graph with vertices $f(V)$ and \[{f(E)=\big\{\{f(x),f(y)\}:\{x,y\}\in E,f(x)\neq f(y)\big\}.}\]

 (b). We define $G\cup G'$ {to be the} graph with vertices $V\cup V'$ and edges $E\cup E'$.
\end{definition}

\begin{definition}\label{def310}
Let $\mathcal J$ be a preserved relation on $V_0$.

(a). Define $G_\mcJ=(V_0,E_\mcJ)$ be a graph with vertices $V_0$, and $\{x,y\}\in E_\mcJ$ if and only if $x\mcJ y$.

(b). For $k\geq 0$, we define $V_k=\bigcup_{|w|=k}F_wV_0$ and {the} graph $G_{\mcJ}^{(k)}(=(V_k,E_\mcJ^{(k)}))=\bigcup_{|w|=k}F_kG_\mcJ$. Here we use $w$ to represent a finite word and $|w|$ to denote its length, so we sum over $w\in \{1,2,\cdots,m+n\}^k$. The notation $F_w$ is short for $F_{w_1}\circ F_{w_2}\circ\cdots \circ F_{w_k}$.

(c). We define an equivalence relation $\mcJ^{(k)}$ on $V_k$ by
\[x\mcJ^{(k)}y\Longleftrightarrow {x \text{ and }y\text{ belong to the same connected component of $G^{(k)}_\mcJ$}}.\]
For a sequence $x=x_0,x_1,x_2,\cdots,x_N=y$ such that $\{x_{i-1},x_i\}\in E_\mcJ^{(k)}$, we call it a $G_\mcJ^{(k)}$-path connecting $x$ and $y$.

\noindent We can also consider {a} preserved relation $\mcJ$ on $\tilde{V}_0$, and define $\tilde{G}_\mcJ$, $\tilde{G}_{\mcJ}^{(k)}$ and $\mcJ^{(k)}$ in the corresponding way.
\end{definition}

The following lemma, which shows the relation between $\mcJ^{(k)},k\geq 0$, will play a fundamental role throughout this section. In particular, (b), (c) are special properties of the Julia sets, {and do} not hold for general p.c.f. fractals.

\begin{lemma}\label{lemma311}
Let $\mcJ$ be a preserved relation on $V_0$ (or $\tilde{V}_0$). Then the definitions of $\mcJ^{(1)}$ in Definition \ref{def35} (a) and Definition \ref{def310} (c) coincide. More generally, for any $0\leq k<l$, we have $\mcJ^{(l)}$ is the smallest equivalence relation such that
\[x \mathcal{J}^{(k)} y\Longrightarrow F_w(x) \mathcal{J}^{(l)}F_w(y), \quad\forall |w|=l-k\text{ and }x,y\in V_k  \text{ (or } \tilde V_k\text{)}.\]
In addition, we have:

(a). Let $0\leq k<l$, then for any $x,y\in V_k$ (or $\tilde{V}_k$),
\[x\mcJ^{(k)}y\Longleftrightarrow x\mcJ^{(l)}y.\]

(b). Let $k\geq 1$, then for any  $x,y\in V_k$ (or $\tilde{V}_k$),
\[x\mcJ^{(k)}y\Longrightarrow R_\lambda(x)\mcJ^{(k-1)}R_\lambda(y).\]

(c). Let $k\geq 0$, then for any  $x,y\in V_k$ (or $\tilde{V}_k$),
\[x\mcJ^{(k)}y\Longrightarrow R_\lambda(x)\mcJ^{(k)}R_\lambda(y).\]
\end{lemma}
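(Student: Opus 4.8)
\textbf{Proof strategy for Lemma \ref{lemma311}.}

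The plan is to first establish the characterization of $\mcJ^{(l)}$ in terms of $\mcJ^{(k)}$ and the maps $F_w$ with $|w|=l-k$, since this is essentially a bookkeeping statement once the two definitions of $\mcJ^{(1)}$ are reconciled. To see that Definition \ref{def35}(a) and Definition \ref{def310}(c) agree for $k=1$: the graph $G_\mcJ^{(1)} = \bigcup_{i=1}^{m+n} F_i G_\mcJ$ has an edge $\{F_i(x),F_i(y)\}$ exactly when $x\mcJ y$ (and $F_i(x)\ne F_i(y)$, which holds since $F_i$ is injective and $x\ne y$), so two points of $V_1$ are connected in $G_\mcJ^{(1)}$ iff they are linked by a chain of such edges, which is precisely the smallest equivalence relation generated by the implications $x\mcJ y \Rightarrow F_i(x)\mcJ^{(1)} F_i(y)$. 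The general statement relating $\mcJ^{(k)}$ and $\mcJ^{(l)}$ then follows by the semigroup property $F_{w} = F_{w'}\circ F_{w''}$ and induction on $l-k$: a $G_\mcJ^{(l)}$-path decomposes into sub-paths lying inside cells $F_{w'} K_\lambda$ with $|w'| = k$, and inside each such cell the restriction of $G_\mcJ^{(l)}$ is the image under $F_{w'}$ of $G_\mcJ^{(l-k)}$ on $V_{l-k}$.

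Part (a) is the heart of the lemma for general p.c.f. fractals and is where the preserved-relation hypothesis is used. It suffices to prove it for $l = k+1$ and then iterate. The forward implication $x\mcJ^{(k)} y \Rightarrow x\mcJ^{(k+1)} y$ is immediate since $G_\mcJ^{(k)} \subset G_\mcJ^{(k+1)}$ as graphs on $V_k \subset V_{k+1}$. For the reverse implication one must show that a $G_\mcJ^{(k+1)}$-path between two points of $V_k$ can be replaced by a $G_\mcJ^{(k)}$-path. The key point is that the new edges in $E_\mcJ^{(k+1)} \setminus E_\mcJ^{(k)}$ all live strictly inside the level-$k$ cells (each edge $\{F_w(F_i(x)), F_w(F_i(y))\}$ with $|w| = k$ joins two points of $F_w V_1$), and that $V_{k+1}\cap F_w K_\lambda = F_w V_1$ meets the "skeleton" $V_k$ only in $F_w V_0$. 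So any $G_\mcJ^{(k+1)}$-path, broken at its visits to $V_k$, is a concatenation of pieces each contained in a single level-$k$ cell $F_w K_\lambda$ and joining two points of $F_w V_0$; pulling back by $F_w$, such a piece is a $G_\mcJ^{(1)}$-path in $V_1$ joining two points of $V_0$, and the preserved-relation property says exactly that $\mcJ^{(1)}|_{V_0} = \mcJ$, so the two endpoints are already $\mcJ$-related, i.e. joined by a single edge of $G_\mcJ$. Pushing forward by $F_w$ gives a single edge of $G_\mcJ^{(k)}$, and reassembling yields a $G_\mcJ^{(k)}$-path. I expect this step --- carefully justifying that a path can be cut at the skeleton $V_k$ so that each segment lies in one cell --- to be the main obstacle, essentially because it is the only place where one needs the finitely-ramified structure ($F_i K_\lambda \cap F_j K_\lambda \subset V_1$) together with $\mcJ$ being preserved.

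Parts (b) and (c) exploit the extra dynamical structure: $R_\lambda$ maps $F_i K_\lambda$ homeomorphically onto $K_\lambda$ and $F_i$ is a branch of $R_\lambda^{-1}$, so $R_\lambda \circ F_i = \mathrm{id}$ and more generally $R_\lambda \circ F_w = F_{\sigma w}$ where $\sigma$ deletes the first letter; in particular $R_\lambda(V_k) = V_{k-1}$ for $k\ge 1$ and $R_\lambda(V_0) = V_0$ (the last because $V_0$ is forward-invariant under $R_\lambda$, being a union of critical orbits). For (b): given a $G_\mcJ^{(k)}$-path from $x$ to $y$, I would again cut it at its visits to $V_{k-1}$; by (a) (applied with the pair $k-1 < k$) consecutive visits to $V_{k-1}$ that are $\mcJ^{(k)}$-related are already $\mcJ^{(k-1)}$-related, so it is enough to treat a path segment lying in one top-level cell $F_i K_\lambda$ with endpoints in $F_i V_0$; but such a segment is $F_i$ of a $G_\mcJ^{(k-1)}$-path in $V_{k-1}$, and applying $R_\lambda = F_i^{-1}$ sends it to a genuine $G_\mcJ^{(k-1)}$-path, whence $R_\lambda(x)\mcJ^{(k-1)} R_\lambda(y)$. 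Part (c) for $k\ge 1$ is then immediate from (b) and (a): $R_\lambda(x)\mcJ^{(k-1)} R_\lambda(y)$ implies $R_\lambda(x)\mcJ^{(k)} R_\lambda(y)$ since $\mcJ^{(k-1)}\subset\mcJ^{(k)}$ on $V_{k-1}\subset V_k$. For $k = 0$, (c) reads $x\mcJ y \Rightarrow R_\lambda(x)\mcJ R_\lambda(y)$ for $x,y\in V_0$; this should follow from the fact that $R_\lambda|_{V_0}$ is realized, cell by cell, as the inverse of the various $F_i$, so an edge of $G_\mcJ$ is carried by $R_\lambda$ to a $G_\mcJ^{(1)}$-structure on $V_0$ and one invokes the preserved property $\mcJ^{(1)}|_{V_0}=\mcJ$ once more. (In the $\tilde V_0$ version, everything goes through verbatim using $\tilde V_1 = R_\lambda^{-1}\tilde V_0$ and the rotational symmetry of $\tilde V_0$.)
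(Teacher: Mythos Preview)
Your overall strategy matches the paper's, and the backward direction of (a) --- cutting a $G_\mcJ^{(l)}$-path at its visits to $V_{l-1}$, pulling each segment back through $F_w^{-1}$, and invoking the preserved property --- is exactly what the paper does. However, two steps need correction.

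First, in the forward direction of (a) you claim that $G_\mcJ^{(k)}\subset G_\mcJ^{(k+1)}$ as graphs. This edge-set inclusion is false in general: an edge $\{x,y\}\in E_\mcJ$ with $x,y\in V_0$ need not lie in $E_\mcJ^{(1)}$, since $x$ and $y$ may belong to different $1$-cells (any two points of $V_0$ in distinct $F_iV_0$ give a counterexample). What is true is the inclusion of \emph{relations}: $\mcJ\subset\mcJ^{(1)}$ holds precisely because $\mcJ$ is preserved (this is exactly the hypothesis $x\mcJ y\Leftrightarrow x\mcJ^{(1)}y$ on $V_0$), and then $\mcJ^{(k)}\subset\mcJ^{(k+1)}$ follows inductively by monotonicity of the construction $\mcJ^{(k)}\mapsto\mcJ^{(k+1)}$. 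The paper makes this step explicit, and it is the place where the preserved hypothesis enters the forward direction.

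Second, your argument for (b) is tangled: you cut at visits to $V_{k-1}$ but then speak of segments in a ``top-level cell $F_iK_\lambda$ with endpoints in $F_iV_0$'', which conflates level-$1$ and level-$(k-1)$ structure, and the endpoints $x,y\in V_k$ need not lie in $V_{k-1}$ at all. The paper's proof of (b) is a one-liner: since $G_\mcJ^{(k)}=\bigcup_i F_i G_\mcJ^{(k-1)}$, every edge of $G_\mcJ^{(k)}$ has the form $\{F_i(a),F_i(b)\}$ with $\{a,b\}\in E_\mcJ^{(k-1)}$, and applying $R_\lambda$ (which inverts each $F_i$) sends it to $\{a,b\}$; hence $R_\lambda$ maps a $G_\mcJ^{(k)}$-path vertex-by-vertex to a $G_\mcJ^{(k-1)}$-path. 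This direct argument also yields (c) uniformly for all $k\ge 0$ via $x\mcJ^{(k)}y\Leftrightarrow x\mcJ^{(k+1)}y\Rightarrow R_\lambda(x)\mcJ^{(k)}R_\lambda(y)$, so no separate treatment of $k=0$ is needed.
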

\begin{proof} Assume $x\mcJ^{(k)}y$, then there exists a $G_\mcJ^{(k)}$-path $x=x_0,x_1,x_2,\cdots,x_N=y$ connecting $x$ and $y$. Clearly, for any $|w|=l-k$, we have that $F_wx_0,F_wx_1,\cdots,F_wx_N$ is a $G_\mcJ^{(l)}$-path. This shows
\[x \mathcal{J}^{(k)} y\Longrightarrow F_w(x) \mathcal{J}^{(l)}F_w(y), \quad\forall |w|=l-k.\]
Noticing that $G_\mcJ^{(l)}=\bigcup_{|w|=l-k}F_w(G_\mcJ^{(k)})$, we have $E_\mcJ^{(l)}\subset \big\{\{F_w(x),F_w(y)\}:x\mcJ^{(k)}y,|w|=l-k\big\}$. Since $\mcJ^{(l)}$ is generated by the edge set $E_\mcJ^{(l)}$, we claim that $\mcJ^{(l)}$ is {the} smallest relation such that the above implication holds.
	
(a). We view an equivalence relation $\mcJ^{(k)}$ as a subset of $V_k\times V_k$. Then, we have $\mcJ\subset \mcJ^{(1)}$ as $\mcJ$ is preserved. Noticing that $\mcJ^{(1)}$ is generated with $\mcJ$, and $\mcJ^{(2)}$ is generated with $\mcJ^{(1)}$ in a same manner, we have $\mcJ^{(1)}\subset \mcJ^{(2)}$. Continuing the argument, we get
\[\mcJ\subset \mcJ^{(1)}\subset \mcJ^{(2)}\subset \mcJ^{(3)}\subset\cdots.\]
This shows $x\mcJ^{(k)}y\Longrightarrow x\mcJ^{(l)}y$.

For the other direction, we assume $x\mcJ^{(l)}y$. Then there exists a $G_\mcJ^{(l)}$-path $x=x_0,x_1,x_2,\cdots,x_N=y$ connecting $x$ and $y$. We choose a subsequence $x=x_{i_0}, x_{i_1},\cdots,x_{i_M}=y$ such that $0=i_0<i_1<i_2<\cdots<i_M=N$, and
\[\{x_{i_1},x_{i_2},\cdots,x_{i_{M-1}}\}=V_{l-1}\cap \{x_1,x_2,\cdots,x_{N-1}\}.\]
Now, we look at $x_{i_0}$ and $x_{i_1}$. Clearly,  $x_{i_0},x_{i_0+1},x_{i_0+2},\cdots,x_{i_1}$ is a $G_\mcJ^{(l)}$-path, which is contained in a same $(l-1)$-cell $F_wK_\lambda$. So $F_{w}^{-1}x_{i_0},F_{w}^{-1}x_{i_0+1},\cdots,F_{w}^{-1}x_{i_1}$ is a $G_\mcJ^{(1)}$-path, thus we have $F_{w}^{-1}x_{i_0}\mcJ^{(1)}F_{w}^{-1}x_{i_1}$, and so $F_{w}^{-1}x_{i_0}\mcJ F_{w}^{-1}x_{i_1}$. This implies $\{x_{i_0},x_{i_1}\}\in E_\mcJ^{(l-1)}$. By the same argument, we may show that $x=x_{i_0},x_{i_1},x_{i_2},\cdots,x_{i_M}=y$ is a $G_\mcJ^{(l-1)}$-path, so $x\mcJ^{(l-1)}y$. By repeating the above arguments, we have
\[x\mcJ^{(l)}y\Longrightarrow x\mcJ^{(l-1)}y\Longrightarrow \cdots \Longrightarrow x\mcJ^{(k)}y.\]

(b). Let $x\mcJ^{(k)}y$, then there is a $G^{(k)}_\mcJ$-path $x,x_1,x_2,\cdots,x_{N}=y$. Then we have $R_\lambda(x),$ $R_\lambda(x_1),\cdots,R_\lambda(y)$ is a $G_{\mcJ}^{(k-1)}$-path, noticing that $G_\mcJ^{(k)}=\bigcup_{i=1}^{m+n}F_iG_\mcJ^{(k-1)}$.

(c) {This assertion} is an easy {synthesis} of (a) and (b). We have for $x,y\in V_k$,
\[x\mcJ^{(k)}y\Longleftrightarrow x\mcJ^{(k+1)}y\Longrightarrow R_\lambda(x)\mcJ^{(k)}R_\lambda(y).\]

Finally, we point out that the proof for the $\tilde{V}_0$ setting is the same.
\end{proof}

As an important corollary to the Lemma \ref{lemma311}, we have the following lemma concerning the critical set $C$.

\begin{lemma}\label{lemma312}
	Let $\mcJ$ be a preserved relation on $V_0$ (or $\tilde{V}_0$). If $C$ is a subset of a class of $\mcJ^{(1)}$, then $\mcJ=1$. In particular, if $\mcJ$ is a non-trivial preserved $\mathcal G$-relation on $\tilde{V}_0$, we have $x{\mcJ\mkern-10.5mu\backslash}^{(1)} y$ for any distinct $x,y\in C$.
\end{lemma}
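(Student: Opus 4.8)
The plan is to exploit Proposition 2.1 together with Lemma \ref{lemma311}(c) to propagate the hypothesis "$C$ lies in a single $\mcJ^{(1)}$-class" all the way down to $V_0$, forcing $\mcJ=1$. First I would argue as follows: if $C$ is contained in one equivalence class of $\mcJ^{(1)}$, then in particular all $m+n$ critical points are $\mcJ^{(1)}$-equivalent to each other, hence (since $\mcJ^{(1)}$ restricted to $V_0$ equals $\mcJ$, because $\mcJ$ is preserved, and the $c_i$ lie in $V_0$ — note $C\subset V_0$ by construction of $V_0=\bigcup_k R_\lambda^{\circ k}(C)$... actually one must be slightly careful here: $C\subset V_1$ in the sense that each $c_i\in F_iK_\lambda\cap F_{i+1}K_\lambda$, so $C\subset V_1$ and we are comparing within $V_1$). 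The cleaner route: since $\mcJ^{(1)}$ on $V_1$ is an equivalence relation, and $C\subset V_1$, the hypothesis says $c_i\,\mcJ^{(1)}\,c_j$ for all $i,j$.

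Next I would use Lemma \ref{lemma311}(c) with $k=1$: from $c_i\,\mcJ^{(1)}\,c_j$ we get $R_\lambda(c_i)\,\mcJ^{(1)}\,R_\lambda(c_j)$. More usefully, I want to descend to level $0$. Here is the key geometric input: the restriction map $R_\lambda\colon K_{\lambda,i}\to K_\lambda$ is a homeomorphism, and on each $1$-cell $F_iK_\lambda$ the graph $G_\mcJ^{(1)}$ restricted there is just $F_i G_\mcJ$, so connectivity within a $1$-cell transfers via $F_i^{-1}=R_\lambda|_{K_{\lambda,i}}$ to connectivity in $G_\mcJ$ on $V_0$. Since $c_{i-1},c_i$ are both in $F_iK_\lambda$ (by \eqref{eqn21}), if a $G_\mcJ^{(1)}$-path from $c_{i-1}$ to $c_i$ stayed inside $F_iK_\lambda$ we would immediately get $F_i^{-1}(c_{i-1})\,\mcJ\,F_i^{-1}(c_i)$, i.e. two points of $V_0$ related by $\mcJ$; but a general $G_\mcJ^{(1)}$-path need not stay in one cell. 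To handle this I would instead argue directly at the level of the quotient: let $\mcJ'$ be the equivalence relation on $V_0$ generated by all pairs $\{F_i^{-1}(x),F_i^{-1}(y)\}$ with $x,y\in F_iK_\lambda\cap V_1$ and $x\,\mcJ^{(1)}\,y$; by the "smallest relation" characterization at the start of Lemma \ref{lemma311}, $\mcJ^{(1)}$ is generated by the $F_i$-images of $\mcJ$, so in fact $\mcJ'=\mcJ$ once one checks $\mcJ$ is preserved (this is exactly preservedness, $x\,\mcJ y\Leftrightarrow x\,\mcJ^{(1)}y$). Now, $c_{i-1},c_i\in F_iK_\lambda$ with $c_{i-1}\,\mcJ^{(1)}\,c_i$ (since all of $C$ is one class); applying $F_i^{-1}$ we find two (possibly equal, but varying $i$ they sweep out) points of $V_0$ that are $\mcJ$-related. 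As $i$ ranges over $1,\dots,m+n$, the points $F_i^{-1}(c_{i-1})$ and $F_i^{-1}(c_i)$ are elements of $C\subset V_0$ (they are among the two critical-point preimages distinguishing the cell $F_iK_\lambda$), and chasing these identifications around the "ring" of cells (Figure \ref{fig22}) forces all of $C$ to collapse to one $\mcJ$-class on $V_0$; but then iterating, $\mcJ^{(1)}$-classes at level $1$ contain whole $1$-cells' worth of $V_0$-copies glued along $C$, so $\mcJ^{(1)}$ is the full relation on $V_1$, whence $\mcJ=\mcJ^{(1)}|_{V_0}=1$.

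For the second assertion, suppose $\mcJ$ is a non-trivial preserved $\mathcal G$-relation on $\tilde V_0$ and, for contradiction, $x\,\mcJ^{(1)}\,y$ for some distinct $x,y\in C$. Since $\mcJ$ is a $\mathcal G$-relation and $\mathcal G$ acts on $C$ transitively (recall $e^{2\pi i/(m+n)}C=C$ and $\mathcal G$ is generated by this rotation, so $\mathcal G$ acts simply transitively on the $m+n$-point set $C$), applying the rotations $g\in\mathcal G$ to the pair $x\,\mcJ^{(1)}\,y$ — using that $g$ maps $G_\mcJ^{(1)}$-paths to $G_\mcJ^{(1)}$-paths because $\mathcal G$ permutes the cells (the compatibility $g\circ F_i=F_{i'}\circ g'$) — produces enough related pairs among the critical points that the $\mcJ^{(1)}$-class of $x$ contains all of $C$; by the first part this gives $\mcJ=1$, contradicting non-triviality. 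I expect the main obstacle to be the bookkeeping in the first part: making rigorous the claim that a $G_\mcJ^{(1)}$-path joining two critical points, which may wander through several $1$-cells, can be broken at the critical points it necessarily passes through (a path leaving cell $F_iK_\lambda$ must cross $C$ since $C$ separates the cells, by Proposition 2.1) into sub-paths each lying in a single $1$-cell, so that $F_i^{-1}$ can be applied segment-by-segment to produce genuine $\mcJ$-relations on $V_0$ — this is the same decomposition trick used in the proof of Lemma \ref{lemma311}(a), and I would invoke it verbatim.
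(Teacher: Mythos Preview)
Your proposal has genuine gaps in both parts.

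\textbf{First part.} You repeatedly write ``$C\subset V_0$'', but this is false: by definition $V_0=\bigcup_{k\geq 1}R_\lambda^{\circ k}(C)$ and the MS condition (MS2) forces $C\cap V_0=\emptyset$. Consequently the points $F_i^{-1}(c_{i-1})=R_\lambda(c_{i-1})$ and $F_i^{-1}(c_i)=R_\lambda(c_i)$ lie in $R_\lambda(C)\subset V_0$, not in $C$. What your argument (via Lemma~\ref{lemma311}(b)) actually yields is that \emph{$R_\lambda(C)$ lies in a single $\mcJ$-class}. From there your ``iterating'' sentence is circular: saying ``$\mcJ^{(1)}$-classes contain whole $1$-cells' worth of $V_0$-copies'' presupposes that $V_0$ is already a single $\mcJ$-class, which is the goal. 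Iterating Lemma~\ref{lemma311}(c) only gives that each $R_\lambda^k(C)$ is a single $\mcJ^{(1)}$-class \emph{separately}; you never link these classes together. The paper closes this gap with a step you do not have: it observes that any two distinct points of $R_\lambda(C)$ lie in \emph{different} $1$-cells (a nontrivial fact about the dynamics), so a $G_\mcJ^{(1)}$-path between them must pass through some $c\in C$, forcing $C\cup R_\lambda(C)$ into one $\mcJ^{(1)}$-class. Only then does iteration produce the chain $R_\lambda^k(C)\cup R_\lambda^{k+1}(C)$ in one class for each $k$, and the overlap gives $\bigcup_{k\geq 0}R_\lambda^k(C)$ in one $\mcJ^{(1)}$-class, whence $\mcJ=1$ on $V_0$.

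\textbf{Second part.} Your claim that rotating a single relation $x\,\mcJ^{(1)}\,y$ by all of $\mathcal G$ ``produces enough related pairs'' to collapse $C$ is not justified. If $y=e^{2k\pi i/(m+n)}x$, the $\mathcal G$-orbit of this relation only forces $c_j\,\mcJ^{(1)}\,c_{j+k}$ for all $j$; when $\gcd(k,m+n)>1$ this splits $C$ into several $\mcJ^{(1)}$-classes rather than one (e.g.\ $m+n=4$, $k=2$). The paper handles the non-adjacent case $2\leq k\leq m+n-2$ by a separate topological argument: removing the four neighbours $e^{\pm 2\pi i/(m+n)}x$, $e^{\pm 2\pi i/(m+n)}y$ from $G_\mcJ^{(1)}$ disconnects $x$ from $y$, so any $G_\mcJ^{(1)}$-path between them must contain an \emph{adjacent} pair $x',\,e^{2\pi i/(m+n)}x'$ in $C$, reducing to the easy case $k=1$. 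This reduction is the missing idea.
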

\begin{proof} First, assume $x\mcJ^{(1)}y$ for any $x,y\in C$. Then, by Lemma \ref{lemma311} (c), we have $x\mcJ^{(1)} y$ for any distinct $x,y\in R_\lambda(C)$. Note that due to the dynamics of $R_\lambda$, $x$ and $y$ could not belong to a same $1$-cell of $K_\lambda$. Let $x,x_1,x_2,\cdots,y$ be a $G^{(1)}_\mcJ$-path connecting $x$ and $y$, then the path exits the 1-cell $F_iK$ containing $x$ at some point $x_l\in C\cap F_iV_0$, and we have $x\mcJ^{(1)} x_l$. In particular, this argument implies that
\[x\mcJ^{(1)} y,\quad\forall x,y\in R_\lambda(C)\cup C.\]
In addition, by using Lemma \ref{lemma311} (c) again, for any $k\geq 0$, we still have
\[x\mcJ^{(1)} y,\quad\forall x,y\in R_\lambda^k\big(R_\lambda(C)\cup C\big).\]
Taking the union, we can see that
\[x\mcJ^{(1)} y,\quad\forall x,y\in \bigcup_{k=0}^\infty R^k_\lambda(C).\]
This implies
\[x\mcJ y,\quad\forall x,y\in V_0=\bigcup_{k=1}^\infty R^k_\lambda(C),\]
so $\mcJ=1$.

Next, we assume that $\mcJ$ is a non-trivial preserved $\mathcal G$-relation on $\tilde{V}_0$, and prove that $x{\mcJ\mkern-10.5mu\backslash}^{(1)} y$ for any distinct $x, y\in C$. For the sake of contradiction we assume there exists $x\neq y$ in $C$ such that $x\mcJ^{(1)} y$, and consider two cases.

\textit{Case 1:} $y=e^{\frac{2\pi i}{m+n}}x$. In this case, by rotation symmetry ($\mathcal G$-symmetry), we have
\[x\mcJ^{(1)} e^{\frac{2\pi i}{m+n}}x\mcJ^{(1)} e^{\frac{4\pi i}{m+n}}x\mcJ^{(1)}\cdots,\]
so $C$ is in a same class of $\mcJ^{(1)}$. This implies $\mcJ=1$, and gives a contradiction.

\textit{Case 2:} $y=e^{\frac{2k\pi i}{m+n}}x$ for some $2\leq k\leq m+n-2$. In this case, removing the vertices $\{e^{\pm \frac{2\pi i}{m+n}}x, e^{\pm \frac{2\pi i}{m+n}}y\}\subset C$ will disconnect $G^{(1)}_\mcJ$, so that $x,y$ belong to different components. This implies that we can find $y'=e^{\frac{2\pi i}{m+n}}x'$ in $C$ such that $x'\mcJ^{(1)} y'$, which {reduces the problem to Case 1.}
\end{proof}

Lemma \ref{lemma312} shows that for a non-trivial preserved $\mathcal G$-relation $\mcJ$, {the extension} $\mcJ^{(1)}$ is quite loose. In particular, there are few choices of paths for $x,y$ in different $1$-cells.

\begin{lemma}\label{lemma313}
Let $\mcJ$ be a non-trivial preserved $\mathcal{G}$-relation on ${\tilde{V}_0}$, and assume $x,y\in \tilde{V}_1\setminus C$ with $x\mcJ^{(1)} y$. Then there exists $1\leq i\leq m+n$ such that $\{x,y\}\subset F_i\tilde{V}_0\cup F_{i+1}\tilde{V}_0$ (cyclic notation $m+n+1=1$). In addition,  if $x,y$ belong to different $1$-cells, say $x\in F_i\tilde{V}_0$, $y\in F_{i+1}\tilde{V}_0$. Then we have $x\mcJ^{(1)}c_i\mcJ^{(1)} y$ (Recall (\ref{eqn21})). 
\end{lemma}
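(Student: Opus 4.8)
\textbf{Proof proposal for Lemma \ref{lemma313}.}

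The plan is to use the structure of the level-$1$ graph $\tilde{G}^{(1)}_\mcJ=\bigcup_{i=1}^{m+n}F_i\tilde{G}_\mcJ$ together with the `ring' arrangement of the $1$-cells recorded in \eqref{eqn21}. Recall from Lemma \ref{lemma312} that for a non-trivial preserved $\mathcal G$-relation $\mcJ$ the critical points are pairwise inequivalent under $\mcJ^{(1)}$; consequently the only vertices lying in two distinct $1$-cells are the critical points $c_i$, and each $c_i$ lies exactly in $F_i\tilde V_0\cap F_{i+1}\tilde V_0$. So if $x\in F_i\tilde V_0$ and $y\in F_j\tilde V_0$ with $x,y\notin C$, any $\tilde G^{(1)}_\mcJ$-path from $x$ to $y$ must pass between consecutive cells only through the shared critical point. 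First I would take a $\tilde G^{(1)}_\mcJ$-path $x=x_0,x_1,\dots,x_N=y$ and track which $1$-cell each edge $\{x_{s-1},x_s\}$ lives in (each edge, being an edge of some $F_k\tilde G_\mcJ$, lies in a single $1$-cell $F_k\tilde V_0$); the sequence of cell-indices then moves between neighbors in the ring, changing value only at a vertex which is a critical point, forcing consecutive indices to be adjacent in cyclic order.

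Next I would reduce to the case that the path visits only two consecutive cells. Suppose the path visits cells $F_i\tilde V_0$ and then, say, $F_{i+1}\tilde V_0$, and later some $F_{i+2}\tilde V_0$ or returns; the only passage from $F_{i+1}$ onward to $F_{i+2}$ is through $c_{i+1}$, and the only passage back to $F_i$ is through $c_i$. Using this, I can short-circuit the path: whenever it re-enters a cell it has left, it does so through the same critical vertex, so the portion in between can be replaced, proving in particular $x\mcJ^{(1)}c_i\mcJ^{(1)}\cdots$ and that the critical points so encountered are $\mcJ^{(1)}$-equivalent. But if three \emph{distinct} critical points $c_{i-1},c_i,c_{i+1}$ (or more) all got identified this way, we could iterate around the ring and conclude all of $C$ is $\mcJ^{(1)}$-equivalent, contradicting Lemma \ref{lemma312} unless the ring is exhausted in a trivial way. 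Pushing this argument carefully shows the path can cross at most one critical point, i.e. $x,y$ lie in $F_i\tilde V_0\cup F_{i+1}\tilde V_0$ for a single $i$; and if they genuinely lie in different cells with $x\in F_i\tilde V_0$, $y\in F_{i+1}\tilde V_0$, the crossing point is $c_i$, giving $x\mcJ^{(1)}c_i\mcJ^{(1)}y$.

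The main obstacle I anticipate is making the `ring combinatorics' rigorous: one must argue that the index sequence of the path, viewed as a walk on the cycle $\mathbb Z/(m+n)$, cannot travel a positive distance around the ring without forcing an identification of three or more critical points, and then invoke Lemma \ref{lemma312}. A clean way is to consider the \emph{first} index along the path where the cell changes (at some $c_j$), so $x\mcJ^{(1)}c_j$, then the first subsequent change (at some $c_{j'}$ with $j'\in\{j-1,j\}$ by adjacency and $j'\neq j$ would exit further), and track that moving to a third cell produces $c_{j-1}\mcJ^{(1)}c_j\mcJ^{(1)}c_{j+1}$; combined with $\mathcal G$-symmetry (rotating by $e^{2\pi i/(m+n)}$) this spreads to all of $C$ and contradicts non-triviality via Lemma \ref{lemma312}. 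I would also need to handle the degenerate small cases (e.g. $m+n=3$, where ``three consecutive'' is already all of $C$) separately but these follow directly from the same contradiction.
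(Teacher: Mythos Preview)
Your approach is correct and is essentially the paper's: take a $\tilde G^{(1)}_\mcJ$-path from $x$ to $y$, observe that cell changes occur only at critical points, and invoke Lemma \ref{lemma312}. However, you make the argument considerably harder than necessary by not exploiting the full strength of Lemma \ref{lemma312}, which you yourself quote at the outset: for a non-trivial preserved $\mathcal G$-relation, \emph{any two} distinct critical points are already $\mcJ^{(1)}$-inequivalent. Since every vertex on the path is $\mcJ^{(1)}$-equivalent to $x$, the path can contain \emph{at most one} critical point. Thus if $x\in F_i\tilde V_0$ and $y\in F_j\tilde V_0$ with $i\neq j$, the exit point from $F_i\tilde V_0$ (in $\{c_{i-1},c_i\}$) and the entry point into $F_j\tilde V_0$ (in $\{c_{j-1},c_j\}$) must coincide, forcing $|i-j|=1$ cyclically and identifying the unique crossing point as $c_i$ (or $c_{i-1}$). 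This is the paper's entire proof.

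Your detour through ``three consecutive critical points identified $\Rightarrow$ by $\mathcal G$-symmetry all of $C$ identified'' is valid but redundant: it re-derives Case~1 of the proof of Lemma \ref{lemma312} instead of using its conclusion. The ring-walk combinatorics you flag as the ``main obstacle'' evaporates once you notice that two distinct critical points on the path is already a contradiction; no tracking of index sequences, no short-circuiting, and no separate treatment of small $m+n$ is needed.
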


\begin{proof}
Let $x=x_0,x_1,\cdots,x_N=y$ be a $G^{(1)}_\mcJ$-path connecting $x$ and $y$. Assume $x\in F_i\tilde{V}_0$ and $y\in F_j\tilde{V}_0$. If $i\neq j$, then the path should leave $F_i\tilde{V}_0$ at $c_{i-1}$ or $c_{i}$, and enter $F_j\tilde{V}_0$ at $c_{j-1}$ or $c_j$. However, according to Lemma \ref{lemma312}, there is at most one critical point contained in the path (not counting multiplicity). This is only possible when $|j-i|=1$. 

If $x\in F_i\tilde{V}_0$ and $y\in F_{i+1}\tilde{V}_0$, then {any} path {from $x$ to $y$ includes} at least one $c\in C$, and the only possible choice is $c_i$ as discussed above.
\end{proof}

In the rest of this section, we will prove the existence of a $\mathcal G$-symmetric solution to (\ref{eqn32}). We consider the cases $m\geq 2$ and $m=1$ separately since they exhibit quite different properties.\vspace{0.2cm}

\noindent\textbf{The $m\geq 2$ case.} 
\vspace{0.15cm}

Throughout this part, we assume that $R_\lambda$ takes the form $R_\lambda(z)=z^n+\frac{\lambda}{z^m}$ with $n\geq 2,m\geq2$. In this case, we will show that there are only trivial preserved $\mathcal{G}$-relations. To use the dynamics more efficiently, we  introduce a natural distance on $\beta_\lambda$.

\begin{definition}\label{def314}
Let $x,y\in \beta_\lambda$, we define 
	\[d_{\beta_\lambda}(x,y)=d_\mathbb{T}(\psi_\lambda(x),\psi_\lambda(y)),\]
where $d_\mathbb{T}$ is the standard distance on the unit  circle $\mathbb{T}$ (for $a,b\in [0,1)$, we have $d_\mathbb{T}([a],[b])=\min\{|a-b|,1-|a-b|\}$).
\end{definition}

By the conjugacy of the dynamics of $R_\lambda$ and $\Phi_n$ on $\beta_\lambda$, it is easy to see the following result.
\begin{lemma}\label{lemma315}
Let $x,y\in \beta_\lambda$, we have $d_{\beta_\lambda}(R_\lambda(x),R_\lambda(y))=\min\{nd_{\beta_\lambda}(x,y),1-nd_{\beta_\lambda}(x,y)\}$ if $d_{\beta_\lambda}(x,y)<\frac{1}{n}$.
\end{lemma}

The distance allows us {to conveniently show when two vertices are in distinct $\mcJ^{(1)}$ classes. }

\begin{lemma}\label{lemma316}
Let  $\mathcal{J}$ be a non-trivial preserved $\mathcal{G}$-relation on $\tilde{V}_0$ and $x,y\in \tilde{V}_0$. We have 
\[d_{\beta_\lambda}(x,y)\geq \frac{2}{n(m+n)}\Longrightarrow x{\mcJ\mkern-10.5mu\backslash} y.\] 
\end{lemma}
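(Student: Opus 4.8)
The plan is to argue by contradiction: suppose $x \mcJ y$ for some $x,y \in \tilde V_0$ with $d_{\beta_\lambda}(x,y) \geq \frac{2}{n(m+n)}$, and derive that $\mcJ$ must be trivial, contradicting the hypothesis. The key mechanism is to push the relation forward under $R_\lambda$ using Lemma~\ref{lemma311}(c), which says $x \mcJ^{(k)} y \Rightarrow R_\lambda(x) \mcJ^{(k)} R_\lambda(y)$, combined with the expansion property of $R_\lambda$ on $\beta_\lambda$ recorded in Lemma~\ref{lemma315}. First I would observe that $x \mcJ y$ implies $x \mcJ^{(1)} y$ (and more generally $x \mcJ^{(k)} y$ for all $k$), and that since $x,y \in \tilde V_0 \subset \beta_\lambda$, the distance $d_{\beta_\lambda}$ is available. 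The strategy is to iterate $R_\lambda$: each application multiplies the $\beta_\lambda$-distance by $n$ (as long as the distance stays below $\frac1n$, per Lemma~\ref{lemma315}), so after finitely many steps the images of $x$ and $y$ are forced to be far apart on $\beta_\lambda$ — in fact at distance at least $\frac{1}{m+n}$, meaning they lie in different $1$-cells and at least one of them is (or the pair straddles) a critical point configuration.

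More precisely, here is the intended chain of reasoning. Starting from $d_{\beta_\lambda}(x,y) \geq \frac{2}{n(m+n)}$, I would first handle the case where this distance is already $\geq \frac{1}{m+n}$: then $R_\lambda(x)$ and $R_\lambda(y)$ are separated by the full critical configuration, and any $G^{(0)}_\mcJ$-path (after transferring down via Lemma~\ref{lemma311}(b)) between them must pass through critical points, which by Lemma~\ref{lemma312} forces $\mcJ^{(1)}$ to glue up enough of $C$ to make $\mcJ = 1$. In the complementary case $\frac{2}{n(m+n)} \leq d_{\beta_\lambda}(x,y) < \frac{1}{m+n}$, I apply $R_\lambda$: if $n \cdot d_{\beta_\lambda}(x,y) < \frac12$ then Lemma~\ref{lemma315} gives $d_{\beta_\lambda}(R_\lambda x, R_\lambda y) = n\, d_{\beta_\lambda}(x,y) \geq \frac{2}{m+n} \geq \frac{1}{m+n}$, reducing to the previous case with $R_\lambda x, R_\lambda y \in \tilde V_0$ (note $R_\lambda(\tilde V_0) \subseteq \tilde V_0$ since $\tilde V_0$ is a union of rotated copies of $V_0 = \bigcup_{k\geq 1} R_\lambda^{\circ k}(C)$, which is forward-invariant up to rotation, and $R_\lambda$ commutes with the rotation group). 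If instead $n \cdot d_{\beta_\lambda}(x,y) \geq \frac12$, then already $d_{\beta_\lambda}(x,y) \geq \frac{1}{2n}$, and I would check directly (using $m \geq 2$, so $m+n \geq 4$ is not quite enough — I need $\frac{1}{2n}$ versus $\frac{1}{m+n}$) whether this forces separation into distinct $1$-cells; when it does not, I fall back on Lemma~\ref{lemma313} to pin down that the only connecting path goes through a single critical point $c_i$, and then track what constraints $x \mcJ^{(1)} c_i \mcJ^{(1)} y$ together with $\mathcal G$-symmetry impose, iterating $R_\lambda$ once more to boost the distance above the threshold.

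The cleanest packaging is probably: let $N$ be the smallest integer with $n^N \cdot \frac{2}{n(m+n)} \geq \frac{1}{m+n}$, i.e. $N = 1$, so that a single application of $R_\lambda$ already does the job whenever the intermediate distance does not wrap around the circle (i.e. stays below $\frac12$ after one step — and $\frac{2}{n(m+n)} \cdot n = \frac{2}{m+n} \leq \frac12$ precisely when $m + n \geq 4$, which holds since $m \geq 2, n \geq 2$). So in the generic situation $d_{\beta_\lambda}(R_\lambda x, R_\lambda y) \geq \frac{2}{m+n}$; then among $R_\lambda x, R_\lambda y$ and the critical set, a $G^{(1)}_\mcJ$-path connecting them must traverse at least two distinct critical points (since they are separated by at least two gaps of the ring of $1$-cells), which by Lemma~\ref{lemma313} is impossible for a non-trivial $\mathcal G$-relation unless those critical points are themselves $\mcJ^{(1)}$-related — and then Lemma~\ref{lemma312} forces $\mcJ = 1$. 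The one remaining subtlety is the boundary case where $n \cdot d_{\beta_\lambda}(x,y)$ exceeds $\frac12$ so the distance ``wraps''; there $d_{\beta_\lambda}(R_\lambda x, R_\lambda y) = 1 - n\,d_{\beta_\lambda}(x,y)$ could be small, but this requires $d_{\beta_\lambda}(x,y)$ to be close to $\frac1n$, and I would exclude or handle this by also using that $x,y \in \tilde V_0$ forces $d_{\beta_\lambda}(x,y)$ to be a rational multiple of $\frac{1}{n^k(m+n)}$ and analyzing the finitely many such values, or by applying a second iterate.

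\textbf{Main obstacle.} I expect the hardest part to be the wrap-around case, where one application of $R_\lambda$ shrinks rather than expands the $\beta_\lambda$-distance because $n\, d_{\beta_\lambda}(x,y)$ crosses $\frac12$; there one cannot simply iterate blindly, and one must use the arithmetic structure of $\tilde V_0$ (the fact that its points have dyadic-in-$n$ coordinates relative to $\theta_\lambda$) together with $\mathcal G$-symmetry and Lemmas~\ref{lemma312}--\ref{lemma313} to rule out the surviving configurations. The rest is a fairly mechanical distance-expansion argument feeding into the already-established structural lemmas about preserved $\mathcal G$-relations.
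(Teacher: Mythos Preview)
Your proposal gets Case 1 of the paper's proof essentially right: when $x,y$ lie in the same $1$-cell $F_i\tilde V_0$, one application of $R_\lambda$ pushes the $\beta_\lambda$-distance up to at least $\frac{2}{m+n}$ (using $m\geq 2$ to control the wrap-around, since $1-\tfrac{n}{m+n}=\tfrac{m}{m+n}\geq\tfrac{2}{m+n}$), and then Lemma~\ref{lemma313} finishes. This matches the paper exactly.

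The gap is in the adjacent-cell case (the paper's Case 2): $x\in F_i\tilde V_0$, $y\in F_{i+1}\tilde V_0$. Here your plan of applying $R_\lambda$ to the pair $(x,y)$ and invoking distance expansion simply fails. With $d_{\beta_\lambda}(x,y)$ ranging over $[\tfrac{2}{n(m+n)},\tfrac{2}{m+n})$, one has $n\,d_{\beta_\lambda}(x,y)$ ranging up to $\tfrac{2n}{m+n}$, which for many $(m,n)$ (e.g.\ $m=n$) lies near $1$, so $d_{\beta_\lambda}(R_\lambda x,R_\lambda y)=1-n\,d_{\beta_\lambda}(x,y)$ can be arbitrarily small. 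This is not a boundary subcase to be mopped up by ``analyzing finitely many values'' or ``a second iterate''; it is the entire adjacent-cell regime, and further iteration does not help because the images can remain close indefinitely. Your diagnosis of the obstacle as a marginal wrap-around phenomenon underestimates it.

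The paper's actual argument for Case 2 does not iterate $R_\lambda$ on $(x,y)$. Instead it uses Lemma~\ref{lemma313} to insert $c_i$ between them, so $x\mcJ^{(1)}c_i\mcJ^{(1)}y$, and then works at the level-$2$ cell scale around $c_i$, introducing the two neighbouring points $\tilde c_1,\tilde c_2\in R_\lambda^{-1}(C)\cap\beta_\lambda$ flanking $c_i$. In Sub-case 2.1 ($x$ and $c_i$ in the same $2$-cell) a distance count forces $c_i$ and $y$ into non-neighbouring $2$-cells, whence $R_\lambda(c_i)\not\mcJ^{(1)}R_\lambda(y)$ by Lemma~\ref{lemma313}. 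In Sub-case 2.2 (both $x$ and $y$ in $2$-cells distinct from that of $c_i$), pushing $x\mcJ^{(1)}c_i$ and $c_i\mcJ^{(1)}y$ forward by $R_\lambda$ forces $R_\lambda(\tilde c_1)\mcJ^{(1)}R_\lambda(c_i)\mcJ^{(1)}R_\lambda(\tilde c_2)$; but $R_\lambda(\tilde c_1),R_\lambda(\tilde c_2)$ are distinct elements of $C$, contradicting Lemma~\ref{lemma312}. This $2$-cell analysis via $\tilde c_1,\tilde c_2$ is the missing idea in your proposal.
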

\begin{proof}
By Lemma \ref{lemma311} (a) and Lemma \ref{lemma313}, without loss of generality, we  may assume that $x\in F_i\tilde V_0$ and $y\in F_i\tilde V_0\cup F_{i+1}\tilde V_0$. We consider two cases.\vspace{0.15cm}

\noindent\textit{Case 1: $y\in F_i\tilde{V}_0$.} In this case, $x$ and $y$ belong to the same $1$-cell, { so} $d_{\beta_\lambda}(x,y)<\frac{1}{m+n}$. Consequently, by Lemma \ref{lemma315}, we have 
\[d_{\beta_\lambda}(R_\lambda(x),R_\lambda(y))\geq \min\{n\cdot\frac{2}{n(m+n)},1-n\cdot\frac{1}{m+n}\}\geq \frac{2}{m+n}.\]
This means {that} $R_\lambda(x)$ and $R_\lambda(y)$ do not belong to neighbouring $1$-cells, so by Lemma \ref{lemma313}, $ R_\lambda(x){\mcJ\mkern-10.5mu\backslash}^{(1)} R_\lambda(y)$. Finally, we apply Lemma \ref{lemma311} to see $x{\mcJ\mkern-10.5mu\backslash}y$.\vspace{0.15cm}

\noindent\textit{Case 2: $y\in F_{i+1}\tilde{V}_0$.} We prove by contradiction. Assume that $x\mathcal{J}y$, then by  Lemma \ref{lemma313}, we have 
\[x\mathcal{J}^{(1)} c_i \mathcal{J}^{(1)}y.\]
See {Figure \ref{fig4}} for an illustration. We label the two vertices surrounding $c_i$ by $\tilde{c}_1,\tilde{c}_2\in R_\lambda^{-1}(C)\cap \beta_\lambda$, ordered so that $\tilde{c}_1\in F_i\tilde{V}_0$ and $\tilde{c}_2\in F_{i+1}\tilde{V}_0$. 

\begin{figure}[htp]
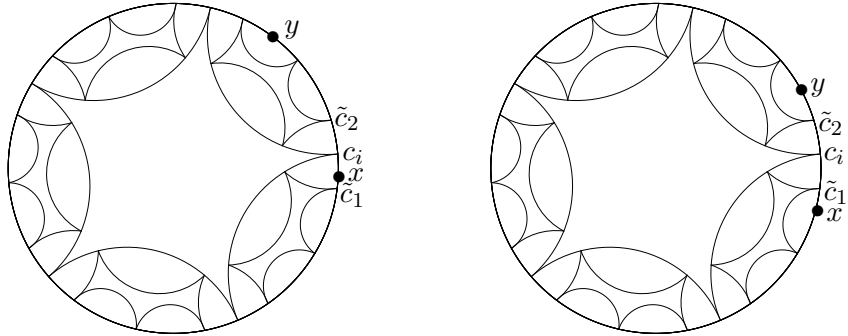

	\includegraphics[width=4.5cm,angle=5]{diagram3.pdf}\qquad\qquad
	\includegraphics[width=4.5cm,angle=5]{diagram3.pdf}
	\begin{picture}(0,0)
	\put(-196,63){$\bullet$}
	\put(-221,116){$\bullet$}
	\put(-190,64){$x$}
	\put(-214,120){$y$}
	\put(-192,72){$c_i$}
	\put(-193,56){$\tilde{c}_1$}
    \put(-195,85){$\tilde{c}_2$}	
	
	\put(-15,50){$\bullet$}
	\put(-21,96){$\bullet$}
	\put(-9,49){$x$}
	\put(-15,98){$y$}
	\put(-10,72){$c_i$}
	\put(-10,57){$\tilde{c}_1$}
	\put(-12,83){$\tilde{c}_2$}	
	\end{picture}

	\caption{An illustration of the vertices $x,\tilde{c}_1,c_i,\tilde{c}_2,y$.}\label{fig4}
\end{figure}

\textit{Case 2.1: $c_i,x$ belong to {the} same $2$-cell.} In this case, we claim that $c_i$ and $y$ do not belong to neighbouring $2$-cells. {If they do, then} 
$d_{\beta_\lambda}(x,c_i)<d_{\beta_\lambda}(c_i,\tilde{c}_1)$ and $d_{\beta_\lambda}(y,c_i)<d_{\beta_\lambda}(c_i,\tilde{c}_2)+\frac{1}{n(m+n)}$, which implies that $d_{\beta_\lambda}(x,y)\leq d_{\beta_\lambda}(x,c_i)+d_{\beta_\lambda}(y,c_i)< \frac{2}{n(m+n)}$, noticing that $d_{\beta_\lambda}(c_i,\tilde{c}_1)+d_{\beta_\lambda}(c_i,\tilde{c}_2)=\frac{1}{n(m+n)}$, {a contradiction}. It is then easy to see that $R_\lambda(c_i)$ and $R_\lambda(y)$ do not belong to neighbouring $1$-cells, and thus by Lemma \ref{lemma313}, $R_\lambda(c_i){\mcJ\mkern-10.5mu\backslash}^{(1)}R_\lambda(y)$. Then using Lemma \ref{lemma311} (c), we have $c_i{\mcJ\mkern-10.5mu\backslash}^{(1)}y$, {violating the initial assumption of Case 2}.

\textit{Case 2.2: $c_i,x$ belong to distinct $2$-cells.} We can additionally assume that $c_i,y$ also belong to distinct $2$-cells, otherwise {we are essentially back to} Case 2.1. Clearly by Lemma \ref{lemma311} (c), we have
\[x\mathcal{J}^{(1)}c_i\Longrightarrow R_\lambda(x)\mathcal{J}^{(1)}R_\lambda(c_i).\]
Then by Lemma \ref{lemma313} and Lemma \ref{lemma315}, $R_\lambda(x)$ and $R_\lambda(c_i)$ must belong to two neighbouring $1$-cells separately which intersection at $R_\lambda(\tilde c_1)$. This gives that
$R_\lambda(\tilde{c}_1)\mathcal{J}^{(1)}R_\lambda(c_i).$
By the symmetric argument we have $R_\lambda(\tilde{c}_2)\mathcal{J}^{(1)}R_\lambda(c_i)$. This implies $R_\lambda(\tilde{c}_1)\mcJ^{(1)}R_\lambda(\tilde{c}_2)$, contradicting Lemma \ref{lemma312}.
\end{proof}

\noindent\textbf{Remark.} The above proof indirectly uses $\mcJ^{(2)}$. More specifically, the proof of $x\mcJ^{(1)}y\Longrightarrow R_\lambda(x)\mcJ^{(1)} R_\lambda(y)$ in Lemma \ref{lemma311} (c) essentially involves {going to the second level.}\vspace{0.15cm}

By applying Lemma \ref{lemma315} and \ref{lemma316}, we can finally prove the non-existence of non-trivial preserved $\mathcal{G}$-relation when $m\geq 2$.

\begin{proposition}\label{prop317}
Let $R_\lambda(z)=z^n+\frac{\lambda}{z^m}$ with $m,n\geq 2$ be an MS map. There does not exist a non-trivial preserved $\mathcal{G}$-preserved relation on $\tilde{V}_0$. In particular, there exists exactly one $\mathcal G$-symmetric solution to (\ref{eqn32}).
\end{proposition}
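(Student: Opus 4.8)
The plan is to use Lemma~\ref{lemma316} to pin down the structure of an arbitrary non-trivial preserved $\mathcal G$-relation $\mcJ$ on $\tilde V_0$ and derive a contradiction. Suppose such a $\mcJ$ exists. By Lemma~\ref{lemma312}, no two distinct critical points are $\mcJ^{(1)}$-related. By Lemma~\ref{lemma313}, if $x\mcJ^{(1)}y$ with $x\neq y$, then either $x,y$ lie in a common $1$-cell $F_i\tilde V_0$, or they lie in neighbouring $1$-cells $F_i\tilde V_0, F_{i+1}\tilde V_0$ and in the latter case the path must pass through the separating critical point $c_i$, giving $x\mcJ^{(1)}c_i\mcJ^{(1)}y$. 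Lemma~\ref{lemma316} then forces any $\mcJ$-related pair $x,y\in\tilde V_0$ to satisfy $d_{\beta_\lambda}(x,y)<\tfrac{2}{n(m+n)}$. Since $\mcJ$ is non-trivial, there exist distinct $x,y\in\tilde V_0$ with $x\mcJ y$; by applying the rotation group $\mathcal G$ we may normalize which $1$-cell(s) $x,y$ occupy, and in particular the two cases of Lemma~\ref{lemma316} (same $1$-cell / adjacent $1$-cells via $c_i$) are exhaustive.

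First I would rule out the ``same $1$-cell'' configuration. If $x,y\in F_i\tilde V_0$ with $x\mcJ^{(1)}y$ and $x\neq y$, then $R_\lambda(x)\mcJ^{(0)}R_\lambda(y)$ — wait, more carefully: $x\mcJ^{(1)}y$ on $\tilde V_1$ gives, via Lemma~\ref{lemma311}(b), $R_\lambda(x)\mcJ y$ on $\tilde V_0$, i.e. $R_\lambda(x)\mcJ R_\lambda(y)$; since $R_\lambda$ restricted to $F_i\tilde V_0$ expands $d_{\beta_\lambda}$ by the factor $n$ on $\beta_\lambda$-points (Lemma~\ref{lemma315}), the image pair has $d_{\beta_\lambda}(R_\lambda(x),R_\lambda(y)) = n\,d_{\beta_\lambda}(x,y)$ as long as this stays below $\tfrac1n$. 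Iterating $R_\lambda$ we can push the distance up until the pair violates the $\tfrac{2}{n(m+n)}$ bound of Lemma~\ref{lemma316}, unless $x$ or $y$ eventually leaves $\beta_\lambda$ or the pair ``wraps around''. The honest obstacle is that $\tilde V_0$ contains points not on $\beta_\lambda$ (the $m$ interior critical-orbit points per cell when $m\ge2$), so the clean $\beta_\lambda$-distance argument does not immediately apply to all of $\tilde V_0$; I would handle this by first descending with Lemma~\ref{lemma313} to reduce to the case where the relevant vertices lie on $\beta_\lambda$ (paths between interior points still must route through critical points on $\beta_\lambda$), and then running the expansion argument on the $\beta_\lambda$ slice. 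The ``adjacent $1$-cell'' configuration is then dispatched exactly as in Case~2 of the proof of Lemma~\ref{lemma316}: $x\mcJ^{(1)}c_i\mcJ^{(1)}y$ with $c_i\in C$, and pushing forward under $R_\lambda$ using Lemma~\ref{lemma311}(c) and Lemma~\ref{lemma312} forces two distinct critical points to become $\mcJ^{(1)}$-related, which is impossible.

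Having shown there is no non-trivial preserved $\mathcal G$-relation, the conclusion follows from Sabot's theorem. In Theorem~\ref{thm38}: part~(b)'s hypothesis ``for all non-trivial preserved $\mathcal G$-relation $\mcJ$, $\overline\rho^{\mathcal G}_\mcJ<\underline\rho^{\mathcal G}_{V_0/\mcJ}$'' is vacuously true, giving at most one $\mathcal G$-symmetric solution up to scaling; the secondary hypothesis (no two strictly ordered non-trivial $\mathcal G$-relations) is also vacuous, so there is exactly one $\mathcal G$-symmetric solution to \eqref{eqn32}. Equivalently one observes part~(a)'s obstruction cannot arise since there are no non-trivial relations to compare. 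I expect the main difficulty to be the bookkeeping in the ``same $1$-cell'' case when $m\ge2$: carefully justifying that repeated application of $R_\lambda$ strictly increases $d_{\beta_\lambda}$ of a $\mcJ$-related $\beta_\lambda$-pair until Lemma~\ref{lemma316} is contradicted, while correctly tracking when a pair collapses onto a single critical point or ceases to satisfy the $d_{\beta_\lambda}<\tfrac1n$ hypothesis of Lemma~\ref{lemma315} — these boundary cases need Lemma~\ref{lemma313} and the preperiodicity of $\theta_\lambda$ to close off.
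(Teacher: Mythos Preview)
Your overall strategy---iterate $R_\lambda$, use the expansion of $d_{\beta_\lambda}$ from Lemma~\ref{lemma315}, then invoke Lemma~\ref{lemma316} and pull back via Lemma~\ref{lemma311}(c)---is exactly the paper's argument. However, you introduce a spurious obstacle that complicates your sketch: you claim $\tilde V_0$ contains points off $\beta_\lambda$ (``the $m$ interior critical-orbit points per cell''). This is false. Recall $V_0=\bigcup_{k\ge1}R_\lambda^{\circ k}(C)$, and by (MS1) we have $C\subset\beta_\lambda$; since $R_\lambda(\beta_\lambda)=\beta_\lambda$, it follows that $V_0\subset\beta_\lambda$, and since $\beta_\lambda$ is invariant under the rotation group $\mathcal G$, also $\tilde V_0\subset\beta_\lambda$. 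The $m$ points off $\beta_\lambda$ you are thinking of are the elements of $F_iC\setminus\beta_\lambda$, which lie in $\tilde V_1$, not $\tilde V_0$. So $d_{\beta_\lambda}$ is defined on all of $\tilde V_0$ and your proposed ``descent via Lemma~\ref{lemma313}'' is unnecessary.

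With this cleared up, the paper's proof is shorter and cleaner than your outline: it does \emph{not} split into ``same $1$-cell'' versus ``adjacent $1$-cell'' cases at all---that dichotomy is already absorbed into Lemma~\ref{lemma316}. One simply takes arbitrary distinct $x,y\in\tilde V_0$ and argues: if $d_{\beta_\lambda}(R_\lambda^k(x),R_\lambda^k(y))<\tfrac{2}{n(m+n)}$ for every $k$, then (using $m+n\ge4$ to ensure the $\min$ in Lemma~\ref{lemma315} is always realized by the $nd$ branch) one gets $d_{\beta_\lambda}(R_\lambda^k(x),R_\lambda^k(y))=n^k d_{\beta_\lambda}(x,y)$ for all $k$, forcing $x=y$. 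Hence some iterate has distance $\ge\tfrac{2}{n(m+n)}$; Lemma~\ref{lemma316} gives $R_\lambda^k(x)\,{\mcJ\mkern-10.5mu\backslash}\, R_\lambda^k(y)$, and Lemma~\ref{lemma311}(c) gives $x\,{\mcJ\mkern-10.5mu\backslash}\, y$. Thus $\mcJ=0$, contradiction. Your application of Theorem~\ref{thm38}(b) at the end is correct.
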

\begin{proof}
    We {prove by} contradiction. Let $\mcJ$ be a non-trivial preserved $\mathcal{G}$-relation. Take $x\neq y$ from $\tilde{V}_0$. We claim that there exists $k\geq0$ such that 
	\begin{equation}\label{eqn34}
	d_{\beta_\lambda}(R_\lambda^k(x),R_\lambda^k(y))\geq \frac{2}{n(m+n)}.
	\end{equation}
	In fact, if not, then for all $k\geq 0$, we have $d_{\beta_\lambda}(R_\lambda^k(x),R_\lambda^k(y))<\frac{2}{n(m+n)}$,
	which implies 
	\[1-nd_{\beta_\lambda}(R_\lambda^{k-1}(x),R_\lambda^{k-1}(y))>1-n\frac{2}{n(m+n)}\geq  {\frac{2}{n(m+n)}}>d_{\beta_\lambda}(R_\lambda^{{k}}(x),R_\lambda^{k}(y)),\quad \forall k\geq 1.\] 
	Thus, by Lemma \ref{lemma315}, 
	\[d_{\beta_\lambda}(R_\lambda^k(x),R_\lambda^k(y))=nd_{\beta_\lambda}(R_\lambda^{k-1}(x),R_\lambda^{k-1}(y))=\cdots=n^kd_{\beta_\lambda}(x,y).\]
	Letting $k\to\infty$, we get $d_{\beta_\lambda}(x,y)=0$, {a} contradiction. 
	
	Now, to prove existence for the $m\geq 2$ case, choose $k\geq 0$ such that (\ref{eqn34}) holds. Then, by Lemma \ref{lemma316}, we have $R_\lambda^k(x){\mcJ\mkern-10.5mu\backslash}R_\lambda^k(y)$. Thus, applying Lemma \ref{lemma311} (c), we have $x{\mcJ\mkern-10.5mu\backslash}y$. Noticing that $x,y$ are arbitrarily chosen, we have $\mcJ=0$. A contradiction. 
	
	Finally, by applying Sabot's theorem, Theorem \ref{thm38} (b), there exists exactly one $\mathcal{G}$-symmetric solution to (\ref{eqn32}).
	\end{proof}

\noindent\textbf{The $m=1$ case.}
\vspace{0.15cm}

 In this case, there may exist a non-trivial preserved $\mathcal{G}$-relation on $\tilde V_0$ ($=V_0$). 

\begin{example}\label{example318}
	We consider the first Julia set presented in Example \ref{example22}. We define an equivalence relation $\mcJ$ by taking  each pair of `opposite' vertices to be a unique equivalence class. More precisely, there are three class $I_1,I_2,I_3$ in $\mcJ$, with  
	\[I_1=\psi_\lambda^{-1}\{[0],[\frac{1}{2}]\},\quad I_2=\psi_\lambda^{-1}\{[\frac{1}{6}],[\frac{2}{3}]\},\quad I_3=\psi_\lambda^{-1}\{[\frac{1}{3}],[\frac{5}{6}]\}.\]
	See Figure \ref{fig5} for an illustration of $\mcJ$ and $\mcJ^{(1)}$. On can see that $\mcJ$ is a perserved $\mathcal G$-relation.
	\begin{figure}[htp]
		\includegraphics[width=5cm]{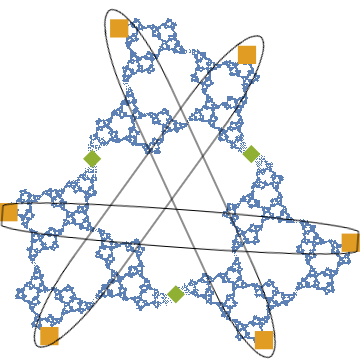}\hspace{1.5cm}
		\includegraphics[width=5cm]{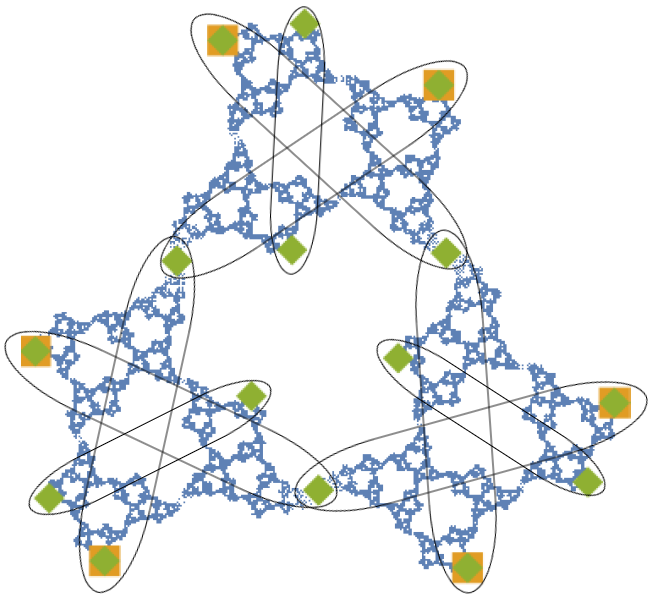}
		\caption{An illustration of a non-trivial preserved $\mathcal{G}$-relation $\mcJ$ and $\mcJ^{(1)}$.}\label{fig5}
	\end{figure}
	
\end{example}

In the subsequent lemmas, we provide a rough picture of all possible preserved $\mathcal{G}$-relations. The proof of the $m\geq 2$ case does not work here, but a similar argument still provides us some insight. Also, as $m+n$ and $n$ are coprime now, we can easily see that $\tilde{V}_0=V_0$. We substitute $m=1$ in the following {discussions}.

\begin{lemma}\label{lemma319}
	Let $\mcJ$ be a preserved relation on $V_0$ such that 
	\[x{\mcJ\mkern-10.5mu\backslash}^{(1)}y, \quad\forall x,y\in C\cup R_\lambda(C).\]
	Then we have $\mcJ=0$.  
\end{lemma}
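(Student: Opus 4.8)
The plan is to argue by contradiction: suppose $\mcJ\neq 0$, so there are $x\neq y$ in $V_0$ with $x\mcJ y$. Since $V_0=\bigcup_{k\geq 1}R_\lambda^{\circ k}(C)$ is forward invariant under $R_\lambda$, Lemma \ref{lemma311}(c) gives $R_\lambda^{\circ j}(x)\mcJ R_\lambda^{\circ j}(y)$ for every $j\geq 0$. Running the expansion argument in the proof of Proposition \ref{prop317} verbatim (with $m=1$, using Lemma \ref{lemma315}) then produces, for some $j$, a pair at $d_{\beta_\lambda}$-distance $\geq\frac{2}{n(n+1)}$. Hence, letting $D$ be the maximum of $d_{\beta_\lambda}(a,b)$ over all distinct $\mcJ$-related pairs $(a,b)$ in $V_0$ -- a finite nonempty set, so the maximum is attained, say by $(u,v)$ -- we have $D\geq\frac{2}{n(n+1)}$.

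Two structural facts will be used. First, the hypothesis in particular forbids distinct critical points from being $\mcJ^{(1)}$-equivalent; inserting this into the proof of Lemma \ref{lemma313} in place of Lemma \ref{lemma312} shows that whenever $a\mcJ^{(1)}b$ with $a,b$ in distinct $1$-cells, those cells are adjacent, say $F_iK_\lambda$ and $F_{i+1}K_\lambda$, the connecting $G^{(1)}_\mcJ$-path meets $C$ only in the single vertex $c_i$ (recall \eqref{eqn21}), and $a\mcJ^{(1)}c_i\mcJ^{(1)}b$ with the sub-path from $a$ to $c_i$ contained in $F_iK_\lambda$. Second, a $G^{(1)}_\mcJ$-path lying inside one $1$-cell $F_iK_\lambda$ whose internal vertices avoid $C$ maps, under $R_\lambda|_{F_iK_\lambda}=F_i^{-1}$, to a $G_\mcJ$-path; since $V_0\cap F_iK_\lambda=F_iV_0$, this yields a \emph{descent}: if $a\mcJ^{(1)}b$ via such a path, then $R_\lambda(a)\mcJ R_\lambda(b)$ in $V_0$, with $R_\lambda(a)\neq R_\lambda(b)$ when $a\neq b$. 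I also record that $C\cap V_0=\emptyset$ and $R_\lambda(C)\subseteq V_0$, so $C\cap R_\lambda(C)=\emptyset$.

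The key step is to rule out that $(u,v)$ lies in a single $1$-cell. If $u,v\in F_iV_0$, then, a cell being an arc of $\psi_\lambda$-length $\frac1{n+1}$ whose endpoints are critical points (and $u,v\notin C$), we get $D<\frac1{n+1}$; descending the $u$--$v$ path (splitting at its unique critical vertex if one occurs, and using transitivity of $\mcJ$) produces a distinct $\mcJ$-related pair $R_\lambda(u),R_\lambda(v)$ with, by Lemma \ref{lemma315}, $d_{\beta_\lambda}(R_\lambda u,R_\lambda v)=\min\{nD,1-nD\}>D$, contradicting maximality of $D$. So $u,v$ lie in adjacent cells $F_iK_\lambda,F_{i+1}K_\lambda$ with $u\mcJ^{(1)}c_i\mcJ^{(1)}v$. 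As the $u$--$v$ geodesic passes through $c_i$, we have $D=d_{\beta_\lambda}(u,c_i)+d_{\beta_\lambda}(c_i,v)$, so up to swapping $u,v$ we may assume $d_{\beta_\lambda}(u,c_i)\geq D/2\geq\frac1{n(n+1)}$. Descending along the sub-path $u\to c_i$ gives a distinct $\mcJ$-pair $R_\lambda(u)\mcJ R_\lambda(c_i)$ in $V_0$ with $R_\lambda(c_i)\in R_\lambda(C)$, and by Lemma \ref{lemma315}
\[
d_{\beta_\lambda}\big(R_\lambda(u),R_\lambda(c_i)\big)=\min\big\{n\,d_{\beta_\lambda}(u,c_i),\ 1-n\,d_{\beta_\lambda}(u,c_i)\big\}\geq\frac1{n+1},
\]
since $d_{\beta_\lambda}(u,c_i)<\frac1{n+1}$ forces $1-n\,d_{\beta_\lambda}(u,c_i)>\frac1{n+1}$ while $n\,d_{\beta_\lambda}(u,c_i)\geq\frac1{n+1}$. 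A distinct $\mcJ$-pair at distance $\geq\frac1{n+1}$ cannot lie in one cell (same reason as before), so $R_\lambda(u)$ and $R_\lambda(c_i)$ lie in adjacent cells with common critical point $c_l$ and $c_l\mcJ^{(1)}R_\lambda(c_i)$. But $c_l\in C$ and $R_\lambda(c_i)\in R_\lambda(C)$ are distinct because $C\cap R_\lambda(C)=\emptyset$, contradicting the hypothesis. Hence $\mcJ=0$.

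The step I expect to be most delicate is the metric bookkeeping on $\beta_\lambda$ -- in particular the identity $D=d_{\beta_\lambda}(u,c_i)+d_{\beta_\lambda}(c_i,v)$ and the subsequent inequalities, which rely on knowing which way around the circle the relevant geodesics go. For $n\geq 3$ two adjacent cells span an arc of length $\frac2{n+1}\leq\frac12$, so this is automatic; for $n=2$, where $\frac2{n+1}=\frac23>\frac12$, I would split into the two circular configurations of $u,c_i,v$: in the exceptional one, $d_{\beta_\lambda}(u,c_i)+d_{\beta_\lambda}(c_i,v)=1-D>\frac12$, so one summand exceeds $\frac14>\frac1{n(n+1)}$ and the same descent closes the argument, at the cost of tracking slightly different constants. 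The rest is routine manipulation of Lemma \ref{lemma315} and of the descent operation.
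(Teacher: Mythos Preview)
Your argument is correct and follows essentially the same route as the paper: both reduce to producing a $\mcJ^{(1)}$-related pair $c\in C$, $R_\lambda(c_i)\in R_\lambda(C)$ by descending from a $\mcJ$-related pair that straddles a critical point, directly contradicting the hypothesis. The paper packages this as a single claim ($d_{\beta_\lambda}(x,y)\geq\frac{1}{n+1}\Rightarrow x{\mcJ\mkern-10.5mu\backslash}\,y$) and then runs the expansion with that threshold --- which already forces $x,y$ into distinct cells and so spares it your separate single-cell ruling-out and the maximality device --- but the substance is identical; your explicit handling of the $n=2$ geodesic ambiguity is a point the paper leaves implicit.
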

\begin{proof}
The proof of the lemma is similar to that of Proposition \ref{prop317}. In fact, we will show the following claim analogous to Lemma \ref{lemma316}.\vspace{0.15cm}

\noindent\textit{Claim:} $d_{\beta_\lambda}(x,y)\geq \frac{1}{n+1}\Longrightarrow x{\mcJ\mkern-10.5mu\backslash} y$ for $x,y\in {V_0}$.
\vspace{0.15cm}

\textit{Proof of the claim.} We prove by contradiction. Clearly, $x,y$ belong to different $1$-cells, and by Lemma \ref{lemma311} (a) and Lemma \ref{lemma313} there is $1\leq i\leq n+1$ such that $x\in F_i{V}_0$, $y\in F_{i+1}{V}_0$ and  $x\mcJ^{(1)}c_i\mcJ^{(1)}y$. 
In addition, one can see that $d_{\beta_\lambda}(x,c_i)+d_{\beta_\lambda}(c_i,y)=d_{\beta_\lambda}(x,y)\geq\frac{1}{n+1}$. Without loss of generality, we assume $d_{\beta_\lambda}(x,c_i)\geq \frac{1}{2(n+1)}\geq\frac{1}{n(n+1)}$, which means that $x,c_i$ are not in the same $2$-cell. Noticing that $R_\lambda(x)\mcJ^{(1)}R_\lambda(c_i)$ by Lemma \ref{lemma311} (c), by Lemma \ref{lemma313}, there is a $G_\mcJ^{(1)}$-path connecting $R_\lambda(x),R_\lambda(c_i)$, and clearly the path will pass through some $c\in C$. Thus, $c\mcJ^{(1)} R_\lambda(c_i)$, which is a contradiction to the assumption.\vspace{0.15cm}

The lemma follows from the above claim and the argument underlying Proposition \ref{prop317}. 
\end{proof}

By Lemma \ref{lemma312} and Lemma \ref{lemma319}, we should have that the restriction of a non-trivial relation $\mcJ^{(1)}$  to $C\cup R_\lambda(C)$ is also non-trivial. This shows that  there are not too many non-trivial preserved $\mathcal{G}$-relations. To quantify this, we define two possible candidates.

\begin{definition}
Define $\kappa$ as the unique permutation on $\{1,2,\cdots,n+1\}$ such that 
\[R_\lambda(c_{\kappa(i)})\in F_i(K).\] 

(a). Define $\breve{G}_+=(C\cup R_\lambda(C),\breve{E}_+)$, with the edge set \[\breve{E}_+=\{\{c_i,R_\lambda(c_{\kappa(i)})\}:1\leq i\leq {n+1}\}.\]
Define $G_+=\bigcup_{k=1}^\infty R_\lambda^k(\breve{G}_+)$, and define {the} equivalence relation $\mcJ_+$ on $V_0$ {by}
\[x\mcJ_+ y \Longleftrightarrow x {\text{ and }} y\text{ belong to {the} same connected component of }G_+.\]
For $1\leq i\leq {n+1}$, let $I_{i,+}$ be the equivalence class of $\mcJ_+$ that contains $R_\lambda(c_{\kappa(i)})$.

(b). Define $\breve{G}_-=(C\cup R_\lambda(C),\breve{E}_-)$, with the edge set \[\breve{E}_-=\{{\{c_{i-1},R_\lambda(c_{\kappa(i)})\}}:1\leq i\leq {n+1}\}.\]
Define $G_-=\bigcup_{k=1}^\infty R_\lambda^k(\breve{G}_-)$, and define {the} equivalence relation $\mcJ_-$ on $V_0$ {by}
\[x\mcJ_- y \Longleftrightarrow x {\text{ and } } y\text{ belong to {the} same connected component of }G_-.\]
For $1\leq i\leq {n+1}$, let $I_{i,-}$ be the equivalence class of $\mcJ_-$ that contains $R_\lambda(c_{\kappa(i)})$.
\end{definition}

We shall see that $\mcJ_+$ and $\mcJ_-$ are the only  possibilities for non-trivial preserved $\mathcal{G}$-relations. 

\begin{lemma}\label{lemma321}
	Let $\mcJ$ be a non-trivial preserved $\mathcal{G}$-relation on $V_0$, then we have either $\mcJ=\mcJ_+$ or $\mcJ=\mcJ_-$. In addition,  we always have ${n+1}$ disjoint equivalence classes $I_i$ such that 
	\[R_\lambda(c_{\kappa(i)})\in I_i, \quad V_0=\bigcup_{i=1}^{n+1}I_i.\]
\end{lemma}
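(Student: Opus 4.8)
The plan is to determine, for an arbitrary non-trivial preserved $\mathcal G$-relation $\mcJ$ on $V_0$, exactly which pairs of points of $C\cup R_\lambda(C)$ are identified by $\mcJ^{(1)}$, and then to recover all of $\mcJ$ by pushing this information forward under $R_\lambda$. \emph{First}, I would extract a relation linking a critical point to a point of $R_\lambda(C)$. As $\mcJ\neq 0$, Lemma~\ref{lemma319} gives distinct $x,y\in C\cup R_\lambda(C)$ with $x\,\mcJ^{(1)}\,y$, and by Lemma~\ref{lemma312} they are not both critical points. If both lie in $R_\lambda(C)$, then, since $R_\lambda(R_\lambda(c_l))=R_\lambda^2(c_l)\in V_0$ and $R_\lambda(c_l)\in F_{\kappa^{-1}(l)}K$, we have $R_\lambda(c_l)=F_{\kappa^{-1}(l)}\big(R_\lambda^2(c_l)\big)\in F_{\kappa^{-1}(l)}V_0\subset V_1$, so Lemma~\ref{lemma313} applies and forces $x,y$ into adjacent $1$-cells $F_iK,F_{i+1}K$ with $x\,\mcJ^{(1)}\,c_i\,\mcJ^{(1)}\,y$; replacing $\{x,y\}$ by $\{c_i,x\}$, we may assume $c_i\,\mcJ^{(1)}\,R_\lambda(c_l)$ for some $i$ and $l$.

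\emph{Second}, I would show that any such relation is an edge of $\breve{G}_+$ or of $\breve{G}_-$. Choose a $G^{(1)}_\mcJ$-path from $c_i$ to $R_\lambda(c_l)$; every critical vertex on it is $\mcJ^{(1)}$-equivalent to $c_i$, hence equal to $c_i$ by Lemma~\ref{lemma312}. Reading the path from its last visit to $C$ (necessarily a visit to $c_i$), the remaining segment stays inside the unique $1$-cell containing $R_\lambda(c_l)$, namely $F_{\kappa^{-1}(l)}K$, so $c_i\in F_{\kappa^{-1}(l)}K\cap C=\{c_{\kappa^{-1}(l)-1},c_{\kappa^{-1}(l)}\}$ by (\ref{eqn21}), i.e. $\kappa^{-1}(l)\in\{i,i+1\}$. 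Since $\mcJ^{(1)}$ inherits the $\mathcal G$-symmetry of $\mcJ$, and the rotation group permutes $C$, $R_\lambda(C)$, the $1$-cells and the permutation $\kappa$ by compatible cyclic shifts, this single edge globalizes to all of $\breve{E}_+$ or all of $\breve{E}_-$; both cannot occur, since $\breve{G}_+\cup\breve{G}_-$ is connected on $C\cup R_\lambda(C)$ and would put $C$ in a single $\mcJ^{(1)}$-class, forcing $\mcJ=1$ by Lemma~\ref{lemma312}. The same dichotomy rules out any further $G^{(1)}_\mcJ$-edges inside $C\cup R_\lambda(C)$ and, via Lemma~\ref{lemma313}, shows that $R_\lambda(c_{\kappa(1)}),\dots,R_\lambda(c_{\kappa(n+1)})$ lie in $n+1$ distinct $\mcJ$-classes. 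Hence, after possibly interchanging $+$ and $-$, $\mcJ^{(1)}$ contains every edge of $\breve{G}_+$ and no edge of $\breve{G}_-$.

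\emph{Third}, applying Lemma~\ref{lemma311}(c) repeatedly to $c_i\,\mcJ^{(1)}\,R_\lambda(c_{\kappa(i)})$ yields $R_\lambda^k(c_i)\,\mcJ^{(1)}\,R_\lambda^{k+1}(c_{\kappa(i)})$ for all $k\geq 0$, so $\mcJ^{(1)}$ contains every edge of $G_+=\bigcup_{k\geq 1}R_\lambda^k(\breve{G}_+)$, whence $\mcJ_+\subset\mcJ$. For the reverse inclusion I would count equivalence classes: pushing $c_{i_0}\,\mcJ^{(1)}\,R_\lambda(c_{\kappa(i_0)})$ forward by $R_\lambda^{k-1}$ shows that any $v=R_\lambda^k(c_{\kappa(i_0)})\in V_0$ is $\mcJ$-equivalent to $R_\lambda^{k-1}(c_{i_0})\in R_\lambda^{k-1}(C)$, so by induction on the least $k$ with $v\in R_\lambda^k(C)$, every point of $V_0$ is $\mcJ$-equivalent to some $R_\lambda(c_{\kappa(i)})$. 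Together with the second step, $\mcJ$ has exactly the $n+1$ classes $I_i$, each being the $\mcJ$-class of $R_\lambda(c_{\kappa(i)})$, with $V_0=\bigcup_{i=1}^{n+1}I_i$; this is the ``in addition'' assertion. The same induction applied to the graph $G_+$ shows $\mcJ_+$ has at most $n+1$ classes, so from $\mcJ_+\subset\mcJ$ and $\#(V_0/\mcJ)=n+1$ we get $\mcJ=\mcJ_+$. If the edge extracted in the first step was a $\breve{G}_-$-edge instead, the same reasoning gives $\mcJ=\mcJ_-$.

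The step I expect to be the main obstacle is the second one. The rigidity claim --- that one local identification $c_i\,\mcJ^{(1)}\,R_\lambda(c_l)$ forces the entire pattern $\breve{G}_+$ or $\breve{G}_-$ --- depends on a careful use of Lemma~\ref{lemma312} to bound how many, and which, critical points a $G^{(1)}_\mcJ$-path can meet, and on tracking precisely the action of the rotation group on $C$, $R_\lambda(C)$, the $1$-cells and the permutation $\kappa$. Once that rigidity is established, the first and third steps are routine consequences of Lemmas~\ref{lemma319}, \ref{lemma313} and \ref{lemma311}.
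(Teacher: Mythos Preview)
Your proposal is correct and follows essentially the same approach as the paper: both arguments use Lemmas~\ref{lemma312}, \ref{lemma319}, \ref{lemma313} and the $\mathcal G$-symmetry to force $\breve{E}_+\subset\mcJ^{(1)}$ or $\breve{E}_-\subset\mcJ^{(1)}$, then push forward by $R_\lambda$ to cover $V_0$ by the $n+1$ classes $I_i$ and invoke the same contradiction $c_i\,\mcJ^{(1)}\,c_{i+1}$ (via Lemma~\ref{lemma312}) to rule out any merging of classes. The only cosmetic difference is that you establish the pairwise distinctness of the classes of $R_\lambda(c_{\kappa(i)})$ first and finish by a class-counting argument, whereas the paper first shows $V_0=\bigcup_i I_{i,+}$ and then derives $\mcJ=\mcJ_+$ by contradiction; the underlying contradiction is identical in both versions.
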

\begin{proof}
	According to Lemma \ref{lemma312}, Lemma \ref{lemma319} and Lemma \ref{lemma311} (c),  there {are $c_i\in C$ and $x\in R_\lambda(C)$} such that $c_i\mcJ^{(1)} x$. In addition, by Lemma \ref{lemma313}, we have $x\in F_iV_0$ or $x\in F_{i+1}V_0$. In other words, either $c_i\mcJ^{(1)} R_\lambda(c_{\kappa(i)})$ or $c_i\mcJ^{(1)} R_\lambda(c_{\kappa(i+1)})$. By the rotation symmetry, we can see that either $\breve{E}_+\subset \mcJ^{(1)}$ or $\breve{E}_-\subset \mcJ^{(1)}$, so 
	\[\mcJ_+\subset \mcJ\text{ or }\mcJ_-\subset \mcJ.\]
	
	Without loss of generality, we may assume $\mcJ_+\subset \mcJ$, and we need to show that $\mcJ_+=\mcJ$. For any $x\in V_0$, there exists $1\leq i\leq n+1$ and $k\geq 1$ such that $R_\lambda^k(c_{\kappa(i)})=x$. Then, we have 
	\[x=R_\lambda^k(c_{\kappa(i)})\mcJ_+ R_\lambda^{k-1}(c_i)\mcJ_+ R_\lambda^{k-2}(c_{\kappa^{-1}(i)})\mcJ_+\cdots\mcJ_+ R_\lambda(c_{\kappa^{2-k}(i)}),\]
	which implies that $x\in I_{\kappa^{1-k}(i),+}$. So
	\[V_0=\bigcup_{i=1}^{n+1}I_{i,+}.\]
	If $\mcJ_+\neq \mcJ$, then there exists an equivalence class $I$ of $\mcJ$ and $i\neq j$ such that 
	\[I_{i,+}\cup I_{j,+}\subset I.\]
	This implies that $R_\lambda(c_{\kappa(i)})\mcJ R_\lambda(c_{\kappa(j)})$, and thus $j=i+1$ (without loss of generality) and  $R_\lambda(c_{\kappa(i)})\mcJ^{(1)} c_i\mcJ^{(1)}R_\lambda(c_{\kappa(i+1)})$ by Lemma \ref{lemma313}. By rotation symmetry, we also have $R_\lambda(c_{\kappa_{i+1}})\mcJ^{(1)}c_{i+1}$. Thus, $c_i\mcJ^{(1)}c_{i+1}$, which is a contradiction by  Lemma \ref{lemma312}. So we have $\mcJ_+=\mcJ$.
	
	Lastly, we claim that the equivalence classes $I_{i,+},1\leq i\leq n+1$ are disjoint. Otherwise, by a same argument as above, we can see that $c_i\mcJ^{(1)}c_{i+1}$ for some $i$. 
\end{proof}

Next, we {roughly describe the structure} of $\mcJ^{(1)}$. 

\begin{lemma}\label{lemma322}
	Let $\mcJ$ be a non-trivial preserved $\mathcal{G}$-relation, and $I_i$'s be the equivalence classes of $\mcJ$ as in Lemma \ref{lemma321}. 
	
	(a). For $(i,i')\neq (j,j')\in \{1,\cdots,n+1\}^2$, we have 
	\[\begin{aligned}
	x\mcJ^{(1)} y,\forall x\in F_iI_{i'}, y\in F_jI_{j'}\Longleftrightarrow &j=i+1, i’=j'=\kappa^{-1}(i)\\&\text{ or }i=j+1, i'=j'=\kappa^{-1}(j).
	\end{aligned}
	\]
	In particular, we have that {$\mcJ^{(1)}$ consists of $n(n+1)$ equivalent classes.}
	
	(b). Define $I_i^{(1)}$ to be the equivalence class of $\mcJ^{(1)}$ that contains $I_i$. We have 
	\[I_i^{(1)}=
	\begin{cases}
	F_iI_{\kappa^{-1}(i)}\cup F_{i+1}I_{\kappa^{-1}(i)},\text{ if }\mcJ=\mcJ_+,\\
	F_iI_{\kappa^{-1}(i)}\cup F_{i-1}I_{\kappa^{-1}(i)},\text{ if }\mcJ=\mcJ_-.
	\end{cases}\]
\end{lemma}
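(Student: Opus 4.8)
The plan is to leverage Lemma \ref{lemma321}, which already pins down $\mcJ$ to be either $\mcJ_+$ or $\mcJ_-$ and gives the decomposition $V_0 = \bigcup_{i=1}^{n+1} I_i$ with $R_\lambda(c_{\kappa(i)}) \in I_i$. Assume without loss of generality $\mcJ = \mcJ_+$; the $\mcJ_-$ case is entirely symmetric (replacing $c_i$ by $c_{i-1}$ in the relevant adjacencies). First I would establish the $(\Longleftarrow)$ direction of (a): by definition $\breve E_+ \subset \mcJ^{(1)}$ means $c_i \mcJ^{(1)} R_\lambda(c_{\kappa(i)})$, and since $R_\lambda(c_{\kappa(i)}) \in F_iV_0 \cap F_{i+1}V_0$ (the critical point $c_i$ separating cells $i$ and $i+1$, recalling \eqref{eqn21}), applying $F_i$ and $F_{i+1}$ to the relation $x \mcJ R_\lambda(c_{\kappa(i)})$ for $x$ ranging over $I_{\kappa^{-1}(i)}$ — note $\kappa(\kappa^{-1}(i)) = i$ so $R_\lambda(c_i) = R_\lambda(c_{\kappa(\kappa^{-1}(i))}) \in I_{\kappa^{-1}(i)}$ is wrong; rather $c_i$ itself is the relevant vertex — gives $F_i I_{\kappa^{-1}(i)} \mcJ^{(1)} F_{i+1} I_{\kappa^{-1}(i)}$ since the common vertex $c_i = F_i(\cdot) = F_{i+1}(\cdot)$ lies in both (here I must be careful: $c_i \in F_iV_0 \cap F_{i+1}V_0$ and $c_i = F_i(q)$ where $\Lambda^{-1}$-address of $q$ is appropriate, and $q \in I_{\kappa^{-1}(i)}$ because $R_\lambda(q) = c_i \cdot$... this identification step needs the explicit dynamics $R_\lambda(c_{\kappa(i)}) \in F_i K$).

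For the $(\Longrightarrow)$ direction, suppose $x \mcJ^{(1)} y$ with $x \in F_iI_{i'}$, $y \in F_jI_{j'}$ and $(i,i') \neq (j,j')$. If $i = j$ then $x \mcJ^{(1)} y$ forces $x,y$ in the same $\mcJ^{(1)}$-class inside $F_iK$, hence $F_i^{-1}x \mcJ^{(1)} F_i^{-1}y$ in $V_1$, hence $F_i^{-1}x \mcJ F_i^{-1}y$ (preserved), so $I_{i'} = I_{j'}$ giving $i' = j'$, contradiction. So $i \neq j$; by Lemma \ref{lemma313} the path from $x$ to $y$ must pass through a single critical point $c_i$ (so $|i-j|=1$, say $j = i+1$) with $x \mcJ^{(1)} c_i \mcJ^{(1)} y$. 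Then I analyze: $x \mcJ^{(1)} c_i$ with $x \in F_iV_0$ and $c_i \in F_iV_0$ means $F_i^{-1}x \mcJ F_i^{-1}c_i$, and $F_i^{-1}c_i$ is the boundary vertex of $V_0$ whose image under $R_\lambda$ is... the key point is $F_i^{-1}(c_i) \in \mathcal P$ addresses $R_\lambda(c_{\kappa(i)})$-type vertex — so $F_i^{-1}c_i \in I_{\kappa^{-1}(i)}$, forcing $i' = \kappa^{-1}(i)$; symmetrically from the $F_{i+1}$ side, $j' = \kappa^{-1}(i)$ as well. This yields exactly the stated equivalence.

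Part (b) then follows immediately: by (a), the only cross-cell identifications in $\mcJ^{(1)}$ are $F_iI_{\kappa^{-1}(i)} \sim F_{i+1}I_{\kappa^{-1}(i)}$ (for $\mcJ_+$), so the $\mcJ^{(1)}$-class containing $I_i \subset F_?V_0$ — wait, I need to track where $I_i$ sits in $V_1$: since $I_i$ is a subset of $V_0 \subset V_1$ via the self-similar inclusion $V_0 \subset V_1$, and the vertex $R_\lambda(c_{\kappa(i)}) \in I_i$ lies in $F_iK \cap F_{i+1}K$, the class $I_i^{(1)}$ absorbs precisely $F_iI_{\kappa^{-1}(i)} \cup F_{i+1}I_{\kappa^{-1}(i)}$ and nothing more (no further chaining, since each $F_jI_{j'}$ participates in at most the two adjacencies from (a), which here coincide). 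The counting claim "$n(n+1)$ equivalence classes" comes from: $\mcJ$ has $n+1$ classes $I_i$; $V_1$ consists of $(n+1)$ cells each carrying $n+1$ classes $F_iI_{i'}$, i.e.\ $(n+1)^2$ classes before gluing; the gluing from (a) identifies them in $n+1$ pairs, so $(n+1)^2 - (n+1) = n(n+1)$.

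The main obstacle I anticipate is the bookkeeping in the $(\Longrightarrow)$ direction — specifically, correctly identifying $F_i^{-1}(c_i)$ as an element of the class $I_{\kappa^{-1}(i)}$. This requires carefully unwinding the definition of $\kappa$ (that $R_\lambda(c_{\kappa(i)}) \in F_iK$, equivalently $F_i^{-1}(c_i)$ has address making it $R_\lambda$-preimage-compatible with $c_{\kappa(i)}$) together with Lemma \ref{lemma311}(c) relating $\mcJ^{(1)}$ before and after applying $R_\lambda$. One must also rule out that a path could "chain" through several cells; that is precisely where Lemma \ref{lemma313}'s restriction (at most neighbouring cells, through a unique critical point) and Lemma \ref{lemma321}'s disjointness of the $I_i$ do the essential work. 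The rest is routine.
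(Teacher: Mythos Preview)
Your approach is essentially the same as the paper's, and the argument is correct once the bookkeeping is straightened out. The key identity that dissolves your hesitation is simply $F_i^{-1}(c_i)=R_\lambda(c_i)\in I_{\kappa^{-1}(i)}$ (since $I_j\ni R_\lambda(c_{\kappa(j)})$, take $j=\kappa^{-1}(i)$); this is exactly what the paper uses, and it immediately gives both the $(\Longleftarrow)$ direction via $c_i=F_i(R_\lambda(c_i))=F_{i+1}(R_\lambda(c_i))\in F_iI_{\kappa^{-1}(i)}\cap F_{i+1}I_{\kappa^{-1}(i)}$ and the forcing $i'=\kappa^{-1}(i)$ in the $(\Longrightarrow)$ direction.

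Two small points where the paper is cleaner. First, rather than splitting into $i=j$ versus $i\neq j$, the paper shows $i'=j'$ \emph{uniformly} at the outset: from $x\mcJ^{(1)}y$ one gets $R_\lambda(x)\mcJ R_\lambda(y)$ by Lemma~\ref{lemma311}(b), and since $R_\lambda(x)\in I_{i'}$, $R_\lambda(y)\in I_{j'}$, this forces $i'=j'$; then $i=j$ is impossible (else $(i,i')=(j,j')$), and Lemma~\ref{lemma313} gives $|i-j|=1$. Your $i=j$ argument (``the path stays inside $F_iK$, so pull back by $F_i^{-1}$'') is not literally justified---the $G_\mcJ^{(1)}$-path could a priori exit and re-enter $F_iK$---but the conclusion is rescued precisely by Lemma~\ref{lemma311}(b), which is what you should cite. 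Second, for part~(b) the paper simply observes that $c_i\mcJ^{(1)}R_\lambda(c_{\kappa(i)})\in I_i$ (from $\breve E_+$), so $c_i\in I_i^{(1)}$, hence $F_iI_{\kappa^{-1}(i)}\cup F_{i+1}I_{\kappa^{-1}(i)}\subset I_i^{(1)}$; by (a) the left side is already a full $\mcJ^{(1)}$-class, so equality holds. Your counting $(n+1)^2-(n+1)=n(n+1)$ matches the paper exactly.
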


\begin{proof}
	(a). `$\Longleftarrow$': Let $1\leq i\leq n+1$. Noticing that $R_\lambda(c_i)\in I_{\kappa^{-1}(i)}$ and $c_i=F_i(R_\lambda(c_i))=F_{i+1}(R_\lambda(c_i))$, we have  $c_i\in F_iI_{\kappa^{-1}(i)}\cap F_{i+1}I_{\kappa^{-1}(i)}${, so} $F_iI_{\kappa^{-1}(i)}$ and $F_{i+1}I_{\kappa^{-1}(i)}$ are subsets of a same $\mcJ^{(1)}$ class. 
	
	\noindent`$\Longrightarrow$': First, let's show that $i'=j'$. In fact, if the left side holds, then $R_\lambda(x)\mcJ R_\lambda(y),\forall x\in F_iI_{i'}, y\in F_jI_{j'}$ by Lemma \ref{lemma311} (b), which is only possible if $i'=j'$. 
	
	Next, we apply Lemma \ref{lemma313} to see that either $i=j$ or $|i-j|=1$. However, the former case is impossible since it  would imply $(i,i')=(j,j')$. Thus, $j=i+1$ or $i=j+1$. Without loss of generality, we assume $j=i+1$. Then, by Lemma \ref{lemma313}, we have $x\mcJ^{(1)} c_i\mcJ^{(1)}y$ for $x\in F_iI_{i'},y\in F_jI_{i'}$. Thus $R_\lambda(x)\mcJ R_\lambda(c_i)$, which implies that $i'=\kappa^{-1}(i)$.
		
	{From this conclusion it is} easy to see that there are $(n+1)^2-(n+1)$ equivalence classes of $\mcJ^{(1)}$, as there are $n+1$ pairs of sets of the form $F_iI_{i'}$ matched. 
	
	(b). We look at the case $\mcJ=\mcJ_+$ only, since the argument for $\mcJ=\mcJ_-$ is the same. In this case, we have $c_i\mcJ^{(1)}R_\lambda(c_{\kappa(i)})$ by definition{, so} $c_i\in I_i^{(1)}$, which implies 
	\[F_iI_{\kappa^{-1}(i)}\cup F_{i+1}I_{\kappa^{-1}(i)}\subset I_i^{(1)}.\]
	Clearly, the left side itself is an equivalence class of $\mcJ^{(1)}$ by (a). 
\end{proof}

 We can now prove the existence of a $\mathcal{G}$-symmetric form on $K_\lambda$ when $n\geq 2$, $m=1$. 

\begin{proposition}\label{prop323}
	Let $R_\lambda(z)=z^n+\frac{\lambda}{z}$ with $n\geq 2$ be a MS map. Let $\mcJ$ be a non-trivial preserved $\mathcal{G}$-relation on $V_0$. Then we have 
	
	(a). $\overline{\rho}^{\mathcal G}_\mcJ\leq 1$.
	
	(b). ${\underline{\rho}^{\mathcal G}_{V_0/\mcJ}}\geq 1+\frac{1}{n}>1$.
	
	\noindent In particular, there exists exactly one $\mathcal G$-symmetric solution to (\ref{eqn32}).
\end{proposition}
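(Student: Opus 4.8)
The plan is to reduce everything to Sabot's criterion, Theorem~\ref{thm38}(b). By Lemma~\ref{lemma321}, every non-trivial preserved $\mathcal G$-relation on $V_0$ is $\mcJ_+$ or $\mcJ_-$, and each has exactly $n+1$ equivalence classes $I_1,\dots,I_{n+1}$ permuted cyclically by $\mathcal G$; so it is enough to prove (a) and (b) for a fixed non-trivial preserved $\mathcal G$-relation $\mcJ$, which I take to be $\mcJ_+$ (the argument for $\mcJ_-$ is symmetric). Two structural facts are used throughout. First, a $\mathcal G$-symmetric form in $\mathcal M_\mcJ$ is block diagonal over $I_1,\dots,I_{n+1}$, and by $\mathcal G$-symmetry its blocks are all congruent to one connected network $\mcD_0$ on an abstract marked set $(I_0,\beta_0)$, where $\beta_0$ is the contact point $R_\lambda(c)$; moreover $T_\mcJ$ keeps such forms block diagonal. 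Second, by Lemma~\ref{lemma322}, $\mcJ^{(1)}$ has $n(n+1)$ classes: the $n+1$ merged classes $I_j^{(1)}=F_jI_{\kappa^{-1}(j)}\cup F_{j+1}I_{\kappa^{-1}(j)}$ --- each two isometric copies of the block $I_{\kappa^{-1}(j)}$ glued at $c_j=F_j(\beta_0)=F_{j+1}(\beta_0)$ --- and $(n-1)(n+1)$ single classes $F_iI_{i'}$. A short bookkeeping step (using $I_{i'}\subset V_0\subset V_1$ and disjointness of the $\mcJ^{(1)}$-classes) gives $I_j^{(1)}\cap V_0=I_j$, so the single classes carry no vertices of $V_0$; and $\kappa$ is the affine involution $\kappa(i)=\mathrm{const}-i$ (because $R_\lambda\circ g=g^{n}\circ R_\lambda$ on $\beta_\lambda$ for $g$ a generator of $\mathcal G$), so $\kappa^{-1}(i)$ and $\kappa^{-1}(i-1)$ are always neighbouring indices.

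For (b) I take $\mcD$ to be the unit-conductance $(n+1)$-cycle on $V_0/\mcJ$, $\mcD(f)=\sum_{i=1}^{n+1}\bigl(f(I_i)-f(I_{i-1})\bigr)^{2}$, which is $\mathcal G$-symmetric and non-degenerate. A function $\tilde f\in l(V_1/\mcJ^{(1)})$ agreeing with $f$ on $V_0$ is forced to equal $f(I_i)$ on each merged class $I_i^{(1)}$ and is free on the single classes, so the minimization defining $T_{V_0/\mcJ}\mcD(f)=\inf_{\tilde f}\sum_i\mcD(\tilde f\circ F_i)$ decouples into $n+1$ identical Dirichlet problems on the cycle $\mcD$ with the two neighbouring nodes $\kappa^{-1}(i),\kappa^{-1}(i-1)$ pinned to $f(I_i),f(I_{i-1})$ and all other nodes free. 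Since the effective resistance between two neighbours of the unit $(n+1)$-cycle is $n/(n+1)$, the $i$-th problem contributes $\tfrac{n+1}{n}\bigl(f(I_i)-f(I_{i-1})\bigr)^{2}$, so $T_{V_0/\mcJ}\mcD(f)=\tfrac{n+1}{n}\,\mcD(f)$ for every $f$. Hence $\underline\rho^{\mathcal G}_{V_0/\mcJ}\ge\underline\rho_{V_0/\mcJ}(\mcD)=1+\tfrac1n$.

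For (a) it suffices to produce one $\mathcal G$-symmetric $\mcD\in\mathcal M_\mcJ$ with $\overline\rho_\mcJ(\mcD)\le1$. Because $\mcD$ and $T_\mcJ\mcD$ are both block diagonal and, by the structure above, the $I_j$-block of $T_\mcJ\mcD$ is the trace onto $I_j^{(1)}\cap V_0=I_j$ of ``two isometric copies of $\mcD_0$ glued at $c_j$'', the Rayleigh quotient $T_\mcJ\mcD(f)/\mcD(f)$ is bounded by the worst block quotient, which by $\mathcal G$-symmetry equals $\overline\rho(\Psi(\mcD_0),\mcD_0):=\sup_g\Psi(\mcD_0)(g)/\mcD_0(g)$ for a single self-map $\Psi$ of block networks: $\Psi(\mcD_0)$ is obtained by gluing, at $\beta_0$, the trace of $\mcD_0$ onto a subset $S\subset I_0$ and the trace of $\mcD_0$ onto a subset $S'\subset I_0$ (with $\beta_0\in S\cap S'$), where $S,S'$ record how $I_j$ meets the two copies. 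Taking $\mcD_0$ to be a tree network adapted to this splitting --- the union at $\beta_0$ of a spanning tree on $S$ and a spanning tree on $S'$ --- one checks that tracing onto $S$ merely removes the $S'$-part as pendant vertices (and symmetrically), so $\Psi(\mcD_0)=\mcD_0$ and $\overline\rho_\mcJ(\mcD)=1$; equivalently, the Perron eigenform of $\Psi$ has eigenvalue $1$. One must also check that the splittings $(S,S')$ for different $j$ are $\mathcal G$-compatible, so that a single $\mathcal G$-symmetric $\mcD$ realizes all blocks at once --- this follows from $\mathcal G$-equivariance of the constructions and the affineness of $\kappa$.

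Finally, (a), (b) and $1<1+\tfrac1n$ give $\overline\rho^{\mathcal G}_\mcJ<\underline\rho^{\mathcal G}_{V_0/\mcJ}$ for every non-trivial preserved $\mathcal G$-relation; and since the only such relations are $\mcJ_+,\mcJ_-$, each with exactly $n+1$ classes, none is strictly contained in another. Theorem~\ref{thm38}(b) then yields exactly one $\mathcal G$-symmetric solution to (\ref{eqn32}). I expect the main obstacle to be part (a): correctly identifying the trace-and-glue map $\Psi$ together with the subsets $S,S'$, exhibiting a $\mathcal G$-symmetric block network fixed by $\Psi$ (so that the renormalization ratio is exactly $1$), and checking $\mathcal G$-compatibility; by contrast the facts $I_j^{(1)}\cap V_0=I_j$, affineness of $\kappa$, and the effective-resistance computation in (b) are routine once Lemmas~\ref{lemma321} and~\ref{lemma322} are available.
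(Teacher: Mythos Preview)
Your argument for (b) is correct and is exactly the paper's: the unit $(n+1)$-cycle on $V_0/\mcJ$, the observation that in each copy the two pinned classes are neighbours (via the affine relation $\kappa^{-1}(i-1)=\kappa^{-1}(i)+1$), and the effective-resistance computation giving the factor $1+\tfrac1n$.

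Your proposal for (a) has a genuine gap. The description of the block map $\Psi$ is not right: the actual $I_j$-block of $T_\mcJ\mcD$ is obtained by \emph{first gluing} two copies of $\mcD_0$ at $\beta_0$ and \emph{then tracing} onto $I_j\subset I_j^{(1)}$, not ``trace then glue.'' Moreover, since $c_j\notin V_0$ (critical points are strictly preperiodic) we have $c_j\notin I_j$, hence $\beta_0=F_j^{-1}(c_j)\notin S$ and likewise $\beta_0\notin S'$; your hypothesis $\beta_0\in S\cap S'$ fails. In fact $|S|+|S'|=|I_j|=|I_0|$ while $\beta_0\in I_0\setminus(S\cup S')$, so $S$ and $S'$ must overlap and your ``union at $\beta_0$ of spanning trees on $S$ and on $S'$'' need not span $I_0$ nor be a tree. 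Finally, even for the natural candidate (the star on $I_0$ centred at $\beta_0$), one does \emph{not} get an exact fixed point: gluing two such stars at their centres gives a star on $I_j^{(1)}$ centred at $c_j$, and tracing onto $I_j$ (which removes the centre $c_j$) yields the complete graph on $I_j$, not a star. So $\Psi(\mcD_0)\neq\mcD_0$ in general; only the inequality $\Psi(\mcD_0)\le\mcD_0$ holds.

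The paper sidesteps all of this by never computing the trace. It takes the star form $\mcD(f)=\sum_i\sum_{x\in I_i\setminus\{x_i\}}(f(x)-f(x_i))^2$ with $x_i=R_\lambda(c_{\kappa(i)})$, observes (as above) that $\mcD^{(1)}$ restricted to $I_i^{(1)}$ is itself a star centred at $c_i$, and then uses a \emph{non-optimal} extension: set $f_1=f$ on $V_0$, $f_1\equiv f(x_i)$ on $I_i^{(1)}\setminus I_i$, and $f_1\equiv 0$ on the single classes. All leaves in $I_i^{(1)}\setminus I_i$ then contribute zero, so $\mcD^{(1)}(f_1)=\mcD(f)$ exactly, whence $T_\mcJ\mcD(f)\le\mcD^{(1)}(f_1)=\mcD(f)$ and $\overline\rho_\mcJ^{\mathcal G}\le\overline\rho_\mcJ(\mcD)\le1$. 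The key idea you are missing is that one only needs $T_\mcJ\mcD\le\mcD$, and a well-chosen extension makes this a one-line computation; searching for an exact eigenform is both harder and, as shown above, not where the star sits.
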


\begin{proof} (a).  We abbreviate $x_i=R_\lambda(c_{\kappa(i)})$ below. Let $I_i$ be the equivalence class of $\mcJ$  containing $x_i$ as in Lemma \ref{lemma321}. Define a form $\mathcal D\in\mathcal M_{\mcJ}$ as
	\[\mcD(f)=\sum_{i=1}^{n+1} \sum_{x\in I_i\setminus \{x_i\}}\big(f(x)-f(x_i)\big)^2, \quad\forall f\in l(V_0).\]
	Now, for each $f\in l(V_0)$, we define the extension $f_1\in l(V_1)$ of $f$  to be
	\[f_1(x)=\begin{cases}
	f(x),&\text{ if }x\in V_0,\\
	f(x_i),&\text{ if }x\in I_i^{(1)}\setminus I_i,\\
	0,&\text{ if }x\in V_1\setminus\bigcup_{i=1}^{n+1} I_i^{(1)}. 
	\end{cases}\] 
	If $\mcJ=\mcJ_+$, we have
	\[\begin{aligned}
	\mathcal{D}^{(1)}(f_1)&=\sum_{i=1}^{n+1}\sum_{j=1}^{n+1}\sum_{x\in I_{j}\setminus\{x_{j}\}}\big(f_1(F_ix)-f_1(F_ix_{j})\big)^2\\
	&=\sum_{i=1}^{n+1}\sum_{x\in I_{\kappa^{-1}(i)}}\big(f_1(F_ix)-f_1(c_i)\big)^2+\sum_{i=1}^{n+1}\sum_{x\in I_{\kappa^{-1}(i-1)}}\big(f_1(F_ix)-f_1(c_{i-1})\big)^2\\
	&=\sum_{i=1}^{n+1}\sum_{x\in I^{(1)}_i\setminus \{c_i\}} \big(f_1(x)-f_1(c_i)\big)^2=\sum_{i=1}^{n+1}\sum_{x\in I_i\setminus\{x_i\}}\big(f(x)-f(x_i)\big)^2=\mcD(f). 
	\end{aligned}\]
	A similar equation holds for the case $\mcJ=\mcJ_-$. Using the above equation we have
	\[\overline{\rho}^{\mathcal{G}}_\mcJ\leq \overline{\rho}^{\mathcal{G}}_\mcJ(\mcD)=\sup_{f\in l(V_0)\setminus l(V_0/\mcJ)}\frac{T_\mcJ\mcD(f)}{\mcD(f)}\leq \sup_{f\in l(V_0)\setminus l(V_0/\mcJ)}\frac{\mcD^{(1)}(f_1)}{\mcD(f)}=1.\] 
	
	(b). We define a form $\mathcal D\in \mathcal M_{V_0/\mcJ}$ as 
	\[\mcD(f)=\sum_{i=1}^{n+1}\big(f(I_{i+1})-f(I_{i})\big)^2,\quad \forall f\in l(V_0/\mcJ).\]
	Recall that we identify $l(V_0/\mcJ)$ with the subspace of $l(V_0)$ consisting of functions with constant value on each $I_i$, and identify $l(V_1/\mcJ^{(1)})$ with a subspace of $l(V_1)$ analogously. Let $f_1$ be an extension of $f$ to $l(V_1/\mcJ^{(1)})$, we have 
	\[\begin{aligned}
	\mcD^{(1)}(f_1)&=\sum_{i=1}^{n+1}\sum_{j=1}^{n+1}\big(f_1(F_iI_{j+1})-f_1(F_iI_{j})\big)^2\\
	&=\sum_{i=1}^{n+1}\Big(f_1\big(F_iI_{\kappa^{-1}(i-1)}\big)-f_1\big(F_iI_{\kappa^{-1}(i)}\big)\Big)^2+\sum_{i=1}^{n+1}\sum_{j\neq \kappa^{-1}(i-1)-1}\Big(f_1\big(F_iI_{j+1}\big)-f_1\big(F_iI_{j}\big)\Big)^2\\
	&\geq (1+\frac{1}{n})\sum_{i=1}^{n+1}\big(f(I_{i+1})-f(I_i)\big)^2=(1+\frac{1}{n})\mcD(f),
	\end{aligned}\]
	where we use the property that $\kappa^{-1}(i-1)-1=\kappa^{-1}(i)$ (in cyclic notation $n+1=0$) in the second equality,  and Lemma \ref{lemma322} (b)  together with an  effective resistance computation in the inequality. Thus, we have 
	\[\underline{\rho}^{\mathcal{G}}_{V_0/\mcJ}\geq \underline{\rho}^{\mathcal{G}}_{V_0/\mcJ}(\mcD)=\inf_{f\in l(V_0/\mcJ)}\frac{T_\mcJ\mcD(f)}{\mcD(f)}\geq 1+\frac1n.\]
	
	Finally, the existence and uniqueness of a $\mathcal{G}$-symmetric solution of (\ref{eqn32}) follows from Sabot's theorem, Theorem \ref{thm38} (b). 
\end{proof}

\subsection{Proof of uniqueness}
In this subsection, we return to the general MS Julia sets and prove that the $\mathcal{G}$-symmetric solution, which has been shown to exist, is the unique solution (without assuming $\mathcal G$-symmetry a priori). For this aim, we will consider the preserved relations $\mcJ$ on $V_0$, which are not assumed to be  $\mathcal{G}$-symmetric. Luckily, we can take the advantage of the existence of a symmetric form {and the} `ring' structure of the {level-1} cells. 

Throughout this subsection, we would admit the following settings:

We fix a solution $\mcD$ to (\ref{eqn32}) which has already been proved to exist. In particular, we have a positive constant $\eta$ (which is uniquely determined by the equation (\ref{eqn32})), such that 
\[T\mcD=\eta^{-1}\mcD.\]
Note that $0<\eta^{-1}<1$ by a well-known theorem (see \cite{ki2,ki3}), and the pair $(\mcD,\eta^{-1})$ is called a \textit{regular harmonic structure} on $K_\lambda$, which will generate a local resistance form $(\mcE,\mathcal{F})$ on $K_\lambda$. {For reference,} a standard proof {is available} in Kigami's book \cite{ki3}. \vspace{0.15cm}

We will show that 
\[\overline{\rho}_{\mcJ,k}<\underline{\rho}_{V_0/\mcJ,k}\]
for some $k\geq 1$. In particular, we will construct forms based on the solution $\mcD$. 

Recall{ ing} the definition of $\mcD_{V_0/\mcJ}$, the following easy lemma follows from Sabot's paper \cite{Sabot}.

\begin{lemma}\label{lemma324}
Let $\mcJ$ be a preserved relation on $V_0$, and let $f\in l(V_0/\mcJ)$. We have 
\[T_{V_0/\mcJ}\mcD_{V_0/\mcJ}(f)\geq T\mcD(f)=\eta^{-1}\mcD(f).\]
\end{lemma}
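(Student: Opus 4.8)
The plan is to unwind the definitions of the two transfer operators and compare the corresponding variational problems directly. Fix $f\in l(V_0/\mcJ)$, viewed (via the identification in Definition 3.6 (b1)) as a function on $V_0$ that is constant on each $\mcJ$-class. By definition,
\[
T_{V_0/\mcJ}\mcD_{V_0/\mcJ}(f)=\inf\{\mcD^{(1)}(\tilde f):\tilde f=f\text{ on }V_0,\ \tilde f\in l(V_1/\mcJ^{(1)})\},
\]
while the (unrestricted) transfer operator $T$ applied to $\mcD$ gives
\[
T\mcD(f)=\inf\{\mcD^{(1)}(\tilde f):\tilde f=f\text{ on }V_0,\ \tilde f\in l(V_1)\}.
\]
Since $l(V_1/\mcJ^{(1)})\subset l(V_1)$, the infimum defining $T_{V_0/\mcJ}\mcD_{V_0/\mcJ}(f)$ is taken over a smaller set of competitors, hence $T_{V_0/\mcJ}\mcD_{V_0/\mcJ}(f)\geq T\mcD(f)$. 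The final equality $T\mcD(f)=\eta^{-1}\mcD(f)$ is then just the defining relation $T\mcD=\eta^{-1}\mcD$ of the regular harmonic structure, evaluated at $f$.

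So the essentially only point to check is that the harmonic (energy-minimizing) extension of $f$ from $V_0$ to $V_1$ with respect to $\mcD^{(1)}$ actually lies in $l(V_1/\mcJ^{(1)})$, i.e. that it is constant on each $\mcJ^{(1)}$-class; otherwise the two infima might a priori coincide for a trivial reason but one should still justify treating $\mcD_{V_0/\mcJ}$ consistently. First I would recall that $H_{\mcD,V_1}f$ solves the Dirichlet problem on the network $(V_1,\mcD^{(1)})$ with boundary data $f$ on $V_0$. Because $f$ is constant on $\mcJ$-classes and $\mcJ$ is a preserved relation, the generating property of $\mcJ^{(1)}$ (Definition 3.5 (a), together with the symmetry coming from the maps $F_i$) forces the unique harmonic extension to respect $\mcJ^{(1)}$: any two vertices in the same $\mcJ^{(1)}$-class can be identified without changing the minimal energy, and by uniqueness of the minimizer they must already carry the same value. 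Concretely, if $x\mcJ^{(1)}y$ then $x=F_i(x')$, $y=F_i(y')$ for some $i$ with $x'\mcJ y'$, so $f\circ F_i$ takes equal values at $x'$ and $y'$; iterating along a $G_\mcJ^{(1)}$-path and invoking $\mcD\in\mcM_\mcJ$ (the kernel of $\mcD$ is exactly the $\mcJ$-constant functions) shows the harmonic extension is forced to be $\mcJ^{(1)}$-constant.

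In the write-up I would keep this short: state that $l(V_1/\mcJ^{(1)})$-competitors are a subset of $l(V_1)$-competitors, conclude the inequality immediately, and then cite $T\mcD=\eta^{-1}\mcD$ for the equality, perhaps with a one-line parenthetical that the harmonic extension of a $\mcJ$-constant function is automatically $\mcJ^{(1)}$-constant so that the restricted infimum is in fact attained and equals the unrestricted one — though for the inequality claimed in the lemma even this remark is not strictly needed. The main (and only mild) obstacle is making sure the identification of $l(V_0/\mcJ)$ and $l(V_1/\mcJ^{(1)})$ with subspaces of $l(V_0)$, $l(V_1)$ is used consistently, so that $\mcD^{(1)}$ evaluated on an element of $l(V_1/\mcJ^{(1)})$ literally agrees with $\mcD^{(1)}$ of the same function viewed in $l(V_1)$; once that bookkeeping is in place the proof is a single line of "smaller feasible set $\Rightarrow$ larger infimum," followed by the definition of $\eta$.
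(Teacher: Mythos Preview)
Your core argument is correct and matches the paper's (implicit) proof: the paper simply calls the lemma easy and defers to Sabot, and the one-line reason is exactly your ``smaller feasible set $\Rightarrow$ larger infimum,'' followed by the fixed-point identity $T\mcD=\eta^{-1}\mcD$.

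Your middle paragraph, however, is both unnecessary and contains a genuine error. You assert $\mcD\in\mathcal{M}_\mcJ$, but $\mcD$ is the \emph{non-degenerate} solution to (3.2); its kernel consists only of constants, so $\mcD\in\mathcal{M}_\mcJ$ holds only when $\mcJ=0$. The argument you sketch there (that the harmonic extension of a $\mcJ$-constant function is automatically $\mcJ^{(1)}$-constant) would in fact prove \emph{equality} $T_{V_0/\mcJ}\mcD_{V_0/\mcJ}(f)=T\mcD(f)$, which is stronger than the lemma claims and is not what is used downstream (Lemma~3.25 only needs the inequality). As you yourself note at the end, none of this is needed; drop it entirely. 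The only bookkeeping actually required is that for $\tilde f\in l(V_1/\mcJ^{(1)})$ one has $\tilde f\circ F_i\in l(V_0/\mcJ)$ (immediate from Definition~3.5(a)), so that the two occurrences of $\mcD^{(1)}$ agree on such $\tilde f$.
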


As an easy consequence, we can see 
\begin{lemma}\label{lemma325}
	Let $\mcJ$ be a preserved relation on $V_0$, then $\underline{\rho}_{V_0/\mcJ,k}\geq \eta^{-k}$ for any $k\geq 1$. 
\end{lemma}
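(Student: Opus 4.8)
The plan is to iterate Lemma \ref{lemma324}. The key observation is that the map $T_{V_0/\mcJ}$ behaves well under composition, because once we have extended $f$ to $l(V_1/\mcJ^{(1)})$ optimally, the resulting form at the next level is again obtained by the same restriction procedure; this is the content of the identity $(T_{V_0/\mcJ})^k = T_{V_0/\mcJ^{(k)}}$-type statement (Sabot's formalism), or more concretely, $T^k_{V_0/\mcJ}\mcD_{V_0/\mcJ}(f) = \inf\{\mcD^{(k)}(\tilde f): \tilde f = f \text{ on } V_0, \tilde f \in l(V_k/\mcJ^{(k)})\}$ where $\mcD^{(k)}(\tilde f)=\sum_{|w|=k}\mcD(\tilde f\circ F_w)$.

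First I would prove the analogous statement to Lemma \ref{lemma324} at level $k$: namely $T^k_{V_0/\mcJ}\mcD_{V_0/\mcJ}(f)\geq T^k\mcD(f)$ for all $f\in l(V_0/\mcJ)$. This follows since any optimal extension computing $T^k\mcD(f)$ lives in $l(V_k)$, whereas the extension computing the left-hand side is constrained to the smaller space $l(V_k/\mcJ^{(k)})$ of functions constant on $\mcJ^{(k)}$-classes; restricting the infimum to a smaller set can only increase it. Alternatively, one argues by induction on $k$ using Lemma \ref{lemma324} together with the semigroup property of $T$ and of $T_{V_0/\mcJ}$, being careful that $\mcD_{V_0/\mcJ}$ evolving under $T_{V_0/\mcJ}$ stays comparable to $\mcD$ evolving under $T$.

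Then I would combine this with the fixed-point equation $T\mcD = \eta^{-1}\mcD$. Iterating gives $T^k\mcD = \eta^{-k}\mcD$, hence $T^k\mcD(f) = \eta^{-k}\mcD(f)$ for all $f$. Plugging into the inequality above, for $f\in l(V_0/\mcJ)$,
\[
T^k_{V_0/\mcJ}\mcD_{V_0/\mcJ}(f)\geq \eta^{-k}\mcD(f) = \eta^{-k}\mcD_{V_0/\mcJ}(f),
\]
so $\inf_{f\in l(V_0/\mcJ)} T^k_{V_0/\mcJ}\mcD_{V_0/\mcJ}(f)/\mcD_{V_0/\mcJ}(f)\geq \eta^{-k}$. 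Taking the supremum over $\mathcal G$-symmetric forms $\mcD'\in\mathcal M_{V_0/\mcJ}$ — which in particular includes our fixed $\mcD_{V_0/\mcJ}$ (the chosen solution is $\mathcal G$-symmetric by the existence results of the previous subsection) — yields $\underline{\rho}_{V_0/\mcJ,k}\geq \eta^{-k}$, as claimed.

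The only real subtlety is bookkeeping with the quotient operator $T_{V_0/\mcJ}$ versus $T$: one must verify that the $k$-th iterate of $T_{V_0/\mcJ}$ is genuinely the level-$k$ restriction onto $l(V_0/\mcJ)$ through $l(V_k/\mcJ^{(k)})$, which uses that $(\mcJ^{(1)})^{(1)} = \mcJ^{(2)}$ and more generally the consistency established in Lemma \ref{lemma311}. Given that lemma, the iteration is formal and the argument is short; I expect no genuine obstacle beyond carefully matching Sabot's definitions. I would also remark that the same reasoning with $T_{V_0/\mcJ}$ replaced by $T$ on all of $l(V_0)$ is exactly the statement that our $\mcD$ realizes the eigenvalue $\eta^{-1}$, so Lemma \ref{lemma325} should be read as saying the quotient dynamics cannot contract faster than the full dynamics.
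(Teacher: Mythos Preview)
Your proposal is correct and matches the paper's approach: the paper merely records Lemma~\ref{lemma325} as ``an easy consequence'' of Lemma~\ref{lemma324} without further argument, and you have correctly supplied the iteration (either via the level-$k$ description of $T^k_{V_0/\mcJ}$ or, more simply, via monotonicity and homogeneity of $T_{V_0/\mcJ}$). One small remark: your parenthetical about $\mathcal G$-symmetry is unnecessary, since in this subsection the group is taken to be trivial (uniqueness is being proved without any symmetry assumption), so $\mcD_{V_0/\mcJ}$ is automatically an admissible competitor in the supremum defining $\underline{\rho}_{V_0/\mcJ,k}$.
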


We will devote the rest of this subsection {to demonstrate a corresponding statement:}
\begin{proposition}\label{prop326}
Let $\mcJ$ be a preserved relation on $V_0$, then $\overline{\rho}_{\mcJ,k}<\eta^{-k}$ for some $k\geq 1$. 
\end{proposition}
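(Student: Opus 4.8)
The plan is to build an explicit test form $\mcD_\mcJ \in \mathcal{M}_\mcJ$ from the fixed solution $\mcD$ and estimate $T^k_\mcJ \mcD_\mcJ$ against it, exploiting the fact that a non-trivial preserved relation $\mcJ$ (if any exists) is tightly constrained: by Lemma \ref{lemma312}, no two distinct critical points lie in the same $\mcJ^{(1)}$-class, and by Lemma \ref{lemma313}, $\mcJ^{(1)}$-related points in distinct $1$-cells must sit in adjacent cells $F_i\tilde V_0\cup F_{i+1}\tilde V_0$ and be linked through the single critical point $c_i$. If $\mcJ$ is trivial there is nothing to prove (one takes $\mcD_\mcJ = \mcD$ and $k=1$, since $T\mcD = \eta^{-1}\mcD < \eta^{-1}\mcD$ fails as equality — so in the trivial case the statement should be read with the convention that $l(V_0)\setminus l(V_0/\mcJ)$ is either empty or forces a strict drop; one must check which of $\mcJ=0,1$ can occur and handle each). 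So the substance is the case of a genuinely non-trivial preserved $\mcJ$.

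First I would fix a class representative structure: let $\mcD_\mcJ$ be the form obtained by "collapsing" $\mcD$ along $\mcJ$, i.e.\ on each $\mcJ$-class one puts a very stiff (large-conductance) spring so that $\mcD_\mcJ \in \mathcal{M}_\mcJ$, while between classes $\mcD_\mcJ$ agrees with the class-reduced form $\mcD_{V_0/\mcJ}$. Since $\mcD$ is the regular harmonic structure, $T\mcD = \eta^{-1}\mcD$ exactly; the point is to show that the presence of the stiff intra-class springs forces a \emph{strict} contraction past rate $\eta^{-1}$ on the complementary directions $f \in l(V_0)\setminus l(V_0/\mcJ)$ — functions that are non-constant on some $\mcJ$-class. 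The key mechanism: when we compute $T_\mcJ \mcD_\mcJ(f)$ for such $f$, the harmonic extension to $V_1$ must "push" the non-constancy of $f$ down into the cells, but Lemma \ref{lemma313} says the only bridges between cells carrying $\mcJ^{(1)}$-identified data are the critical points, and Lemma \ref{lemma312} says those bridges cannot merge two distinct critical points. This rigidity means the energy cost of a within-class fluctuation cannot be relieved by spreading it across adjacent cells the way a genuine harmonic function's energy is; one gains a definite factor.

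The cleanest route is probably iterative. Using Lemma \ref{lemma311}(a)--(c) — particularly that $\mcJ^{(k)}$ is governed by the dynamics of $R_\lambda$ and that $x\mcJ^{(k)}y \Rightarrow R_\lambda(x)\mcJ^{(k-1)}R_\lambda(y)$ — one tracks a within-class fluctuation through $k$ levels. At each level the fluctuation either "survives" confined to a small chain of cells (costing energy that scales like $\eta^{-1}$ times the previous level, same as for $\mcD$) or it gets "trapped" because the relevant vertices fail to be $\mcJ^{(k)}$-related, at which point the extension can set it to a single value and the contribution \emph{vanishes}. Since $V_0 = \bigcup_k R_\lambda^k(C)$ and any fixed fluctuation eventually reaches a scale where Lemma \ref{lemma316} / Lemma \ref{lemma319}-type separation kicks in (the $d_{\beta_\lambda}$-expansion of $R_\lambda$), after finitely many steps $k$ the trapped part strictly dominates, giving $T^k_\mcJ\mcD_\mcJ(f) \leq (\eta^{-k} - \delta)\mcD_\mcJ(f)$ for a uniform $\delta>0$ over $f\in l(V_0)\setminus l(V_0/\mcJ)$ (a compactness argument on the projectivized space handles uniformity). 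That yields $\overline\rho_{\mcJ,k}^{\mathcal G} \leq \overline\rho_{\mcJ,k} < \eta^{-k}$.

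The main obstacle I anticipate is quantifying the "definite gain" uniformly — i.e.\ converting the qualitative statement "a within-class fluctuation eventually gets trapped" into a single $k$ and a single $\delta$ that work for every non-trivial preserved $\mcJ$ and every $f$. Two sub-issues: (i) one must bound how many levels of descent it takes before the trapping occurs, which should follow from the $d_{\beta_\lambda}$-contraction-type estimates (Lemmas \ref{lemma315}, \ref{lemma316}) applied to the vertices of $V_0$ — but one needs that bound independent of which $\mcJ$ is in play, and since $V_0$ is finite and the preserved relations form a finite set, this is a finite check once set up correctly; (ii) the effective-resistance computation showing the trapped contribution is bounded \emph{below} by a constant multiple of $\mcD_\mcJ(f)$ — here the finite-dimensionality of $l(V_0)$ and non-degeneracy of $\mcD_\mcJ$ (irreducibility, as noted after \eqref{eqn33}) give the bound, but one should be careful that the constant does not degenerate as $f$ approaches $l(V_0/\mcJ)$, which is exactly why one works with the relaxed quantity $\overline\rho_{\mcJ,k}$ and an infimum over $\mcD\in\mathcal M_\mcJ$ rather than a single form — choosing $\mcD_\mcJ$ with the intra-class conductances sent to $\infty$ in the limit makes the competing term blow up relative to the surviving term.
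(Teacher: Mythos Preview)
Your proposal has the right overall shape---build a test form in $\mathcal{M}_\mcJ$ from the known solution $\mcD$, show strict contraction for some $k$, then pass to a uniform $k$ by compactness on a normalized set of test functions---and that last step matches the paper. But there are two genuine gaps in the core.

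First, your test form is not well-defined as described. A form in $\mathcal{M}_\mcJ$ must vanish on every $f \in l(V_0/\mcJ)$, so it can carry \emph{no} conductance between distinct $\mcJ$-classes; your requirement that ``between classes $\mcD_\mcJ$ agrees with $\mcD_{V_0/\mcJ}$'' is incompatible with $\mcD_\mcJ \in \mathcal{M}_\mcJ$, and sending intra-class conductances to infinity does not help (the limit leaves $\mathcal{M}_\mcJ$). The paper instead takes $\mcD_\mcJ = \sum_a \mcD|_{I_a}$ (Definition~\ref{def327}), the sum of the traces of $\mcD$ onto the individual equivalence classes. This specific choice is what makes the chain $\eta^k T_\mcJ^k \mcD_\mcJ(f) \leq \cdots \leq \mcD_\mcJ(f)$ of Lemma~\ref{lemma329}(b) go through, by comparison with the genuine harmonic extension $h_f = H_{\mcE,I_a}f$.

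Second, and more seriously, you lean on Lemmas~\ref{lemma313}, \ref{lemma316}, \ref{lemma319} and the second clause of Lemma~\ref{lemma312}, all of which require $\mcJ$ to be a preserved $\mathcal{G}$-relation. Proposition~\ref{prop326} is stated for an \emph{arbitrary} preserved relation with no symmetry assumed---that is precisely the point of the uniqueness argument, which must rule out non-symmetric competitors. The only structural fact available is the first clause of Lemma~\ref{lemma312}: if all of $C$ lies in one $\mcJ^{(1)}$-class then $\mcJ = 1$. The paper exploits this as follows. Equality $\eta^k T^k_\mcJ \mcD_\mcJ(f) = \mcD_\mcJ(f)$ forces condition~(\ref{eqn37}) on $h_f$, which (Lemma~\ref{lemma331}) constrains the set $C_{h_f\circ F_w}$ of critical points carrying nonzero normal derivative (``flow'') to be a proper subset of $C$, for every finite word $w$. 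This is then contradicted by a direct flow argument (Lemma~\ref{lemma332}): by descending through cells and tracking where the nonzero boundary flows sit, one produces a word $w$ for which $h_f \circ F_w$ is harmonic off exactly two boundary vertices lying in \emph{distinct} $1$-cells, and an effective-resistance computation on the ring of $1$-cells then forces nonzero flow through every $c \in C$. Your ``trapping'' heuristic gestures toward a mechanism but does not supply one; this normal-derivative bookkeeping is the missing idea.
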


To prove the proposition, we will study the following  form in $\mathcal{M}_\mcJ$.
\begin{definition}\label{def327}
Let $\mcJ$ be a preserved relation on $V_0$, with equivalence classes $I_a$, $a=1,2,\cdots,N$.

(a).  We define 
	\[\mcD_\mcJ=\sum_{a=1}^N\mcD|_{I_a}.\]

(b). For $1\leq a\leq N$, we define 
\[\ell(I_a)=\{f\in l(V_0):f|_{V_0\setminus I_a}\equiv 0\}.\]

(c). For $1\leq a\leq N$ and $f\in \ell(I_a)$, we define $H^{(1)}_{\mcD_\mcJ,I_a}f$ {to be} the unique function in $l(V_1)$ such that 
\[(H^{(1)}_{\mcD_\mcJ,I_a}f)|_{V_1\setminus I_a^{(1)}}\equiv 0,\]
and 
\[\mcD_\mcJ^{(1)}\big(H^{(1)}_{\mcD_\mcJ,I_a}f\big)=T_\mcJ\mcD_\mcJ(f),\]
where we recall that $\mcD^{(1)}_\mcJ(g)=\sum_{i=1}^{m+n}\mcD_\mcJ(g\circ F_i)$ for any $g\in l(V_1)$.
\end{definition}

Since we no longer {are assuming rotational symmetry}, our starting points will be  Lemmas \ref{lemma311} and \ref{lemma312}. The following statement is an easy consequence of Lemma \ref{lemma311}.

\begin{lemma}\label{lemma328}
Let $\mcJ$ be a preserved relation on $V_0$, with equivalence classes $I_a,a=1,2,\cdots,N$. For any $1\leq a\leq N$ and $k\geq 1$, there is a unique $\mcJ^{(k)}$ class $I_a^{(k)}$ such that $I_a\subset I_a^{(k)}$. In addition, {the classes} $I_a^{(k)},a=1,2,\cdots,N$ are disjoint. {Finally, f}or any finite word $w$ with $|w|=k$, if $F_wV_0\cap I^{({k})}_a\neq \emptyset$, we have a unique $a_w\in \{1,2,\cdots,N\}$ such that $F_wI_{a_w}\subset I_a^{({k})}$. 
\end{lemma}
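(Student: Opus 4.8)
\textbf{Proof plan for Lemma \ref{lemma328}.}

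The plan is to extract everything from Lemma \ref{lemma311} (a), which already tells us that the relations $\mcJ^{(k)}$ are nested and that $\mcJ^{(k)}$ restricted to $V_0$ coincides with $\mcJ$. First I would recall that each $\mcJ^{(k)}$ is an equivalence relation on $V_k$, and that by Lemma \ref{lemma311} (a) we have, for $x,y\in V_0\subset V_k$, that $x\mcJ^{(k)}y$ if and only if $x\mcJ y$. This immediately gives the existence and uniqueness of $I_a^{(k)}$: define $I_a^{(k)}$ to be the unique $\mcJ^{(k)}$-equivalence class of $V_k$ meeting $I_a$; it contains $I_a$ because no two points of $I_a$ can be $\mcJ^{(k)}$-inequivalent (they are $\mcJ$-equivalent, hence $\mcJ^{(k)}$-equivalent by Lemma \ref{lemma311} (a)), and it is unique since equivalence classes partition $V_k$.

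Next I would establish disjointness of $I_1^{(k)},\dots,I_N^{(k)}$. These are equivalence classes of the \emph{same} equivalence relation $\mcJ^{(k)}$ on $V_k$, so any two of them are either equal or disjoint. If $I_a^{(k)}=I_b^{(k)}$ for $a\neq b$, then picking $x\in I_a$ and $y\in I_b$ we get $x\mcJ^{(k)}y$ with $x,y\in V_0$, so by Lemma \ref{lemma311} (a) $x\mcJ y$, contradicting that $I_a,I_b$ are distinct $\mcJ$-classes. Hence the $I_a^{(k)}$ are pairwise disjoint.

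For the last assertion, fix a word $w$ with $|w|=k$ and suppose $F_wV_0\cap I_a^{(k)}\neq\emptyset$, say $F_w(p)\in I_a^{(k)}$ for some $p\in V_0$. Let $a_w$ be the index of the $\mcJ$-class of $p$, i.e. $p\in I_{a_w}$. For any $q\in I_{a_w}$ we have $p\mcJ q$, hence $F_w(p)\mcJ^{(k)}F_w(q)$ by the generation property of $\mcJ^{(k)}$ recorded at the start of Lemma \ref{lemma311} (applied with $k=0$, $l=k$), so $F_w(q)$ lies in the same $\mcJ^{(k)}$-class as $F_w(p)$, which is $I_a^{(k)}$. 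Thus $F_wI_{a_w}\subset I_a^{(k)}$. Uniqueness of $a_w$ follows because $F_w$ is injective: if $F_wI_{b}\subset I_a^{(k)}$ for another $b$, then $F_w(p')\mcJ^{(k)}F_w(p)$ for $p'\in I_b$, and since $I_b$ and $I_{a_w}$ both lie in cells of the $\mcJ^{(k)}$-class $I_a^{(k)}$ we would, projecting back via injectivity of $F_w$ on $V_0$ and the fact that a $G_\mcJ^{(k)}$-path inside the single $k$-cell $F_wK_\lambda$ pushes down to a $G_\mcJ$-path, obtain $p'\mcJ p$, forcing $b=a_w$.

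I do not expect any serious obstacle here: the statement is purely a bookkeeping consequence of the nestedness and the $V_0$-coincidence already proved in Lemma \ref{lemma311} (a), together with the elementary fact that equivalence classes partition. The only point requiring a little care is the uniqueness of $a_w$ in the final clause — one must use that $F_w$ restricted to a single $k$-cell is a homeomorphism onto $F_wK_\lambda$, so that connectivity in $G_\mcJ^{(k)}$ \emph{within that one cell} descends to connectivity in $G_\mcJ$ on $V_0$; this is exactly the local inversion argument already used in the proof of Lemma \ref{lemma311} (a), so it can be invoked rather than repeated.
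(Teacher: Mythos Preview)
Your approach is essentially the same as the paper's: deduce everything from Lemma~\ref{lemma311}. The first two parts (existence/uniqueness of $I_a^{(k)}$ and their disjointness) are handled exactly as in the paper, and your treatment of the \emph{existence} of $a_w$ is more explicit than the paper's (which leaves it implicit).

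There is, however, a small imprecision in your uniqueness argument for $a_w$. You want to pass from $F_w(p)\,\mcJ^{(k)}\,F_w(p')$ back to $p\,\mcJ\,p'$, and you propose to do this by finding a $G_\mcJ^{(k)}$-path inside the single $k$-cell $F_wK_\lambda$ and pushing it down via $F_w^{-1}$. The trouble is that a $G_\mcJ^{(k)}$-path joining $F_w(p)$ and $F_w(p')$ need not stay inside $F_wK_\lambda$; it may wander through neighbouring $k$-cells. The ``local inversion'' passage in the proof of Lemma~\ref{lemma311}\,(a) that you cite does something different: it descends from $\mcJ^{(l)}$ to $\mcJ^{(k)}$ for endpoints already in $V_k$, not from $\mcJ^{(k)}$ on $F_wV_0$ down to $\mcJ$ on $V_0$. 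The clean tool here is Lemma~\ref{lemma311}\,(b): iterate it $k$ times to get
\[
F_w(p)\,\mcJ^{(k)}\,F_w(p')\;\Longrightarrow\;R_\lambda^k(F_w(p))\,\mcJ\,R_\lambda^k(F_w(p')),
\]
i.e.\ $p\,\mcJ\,p'$, since $R_\lambda^k\circ F_w=\mathrm{id}$. This is exactly what the paper does. With that one-line fix your argument is complete and matches the paper's.
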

\begin{proof}
The existence of $I_a^{(k)}$ and the claim that $I_a^{(k)}$ are disjoint  follow quickly from Lemma \ref{lemma311} (a). To see the {last} statement, we assume there exist $I_b\neq I_{b'}$ such that $F_wI_b\subset I_a^{(k)}, F_wI_{b'}\subset I_a^{(k)}$. Then for $x\in I_b$ and $y\in I_{b'}$, using Lemma \ref{lemma311} (b), we have 
\[F_wx\mcJ^{(k)}F_wy\Longrightarrow x\mcJ y.\]
This is impossible {because $x$ and $y$ have been} chosen from different equivalence classes of $\mcJ$.  
\end{proof}

The above lemma allows us to prove the following result.
\begin{lemma}\label{lemma329}
Let $\mcJ$ be a preserved relation on $V_0$, with equivalence classes $I_a,a=1,2,\cdots,N$.

(a). Let $a\neq b$. Then 
\[\tilde{\mcD}(f,g)=0, \quad\forall\tilde{\mcD}\in \mathcal{M}_\mcJ, f\in \ell(I_a),g\in\ell(I_b).\]

(b). Let $k\geq 1$ and $f\in  l(V_0)$, we have  
\begin{equation}\label{eqn35}
\eta^kT_\mcJ^k\mcD_\mcJ(f)\leq\eta^{k-1}T_\mcJ^{k-1}\mcD_\mcJ(f)\leq\cdots\leq\mcD_\mcJ(f).
\end{equation}
Moreover,  for fixed $1\leq a\leq N$, let $h_f:=H_{\mcE,I_a}(f)$, which is the harmonic extension of $f|_{I_a}$ with respect to the form $(\mathcal{E},\mathcal{F})$ generated by $\mcD$. Also, let $W_{k,a}=\{|w|=k:F_wV_0\cap I_a^{(k)}\neq \emptyset\}$. Then, 
\begin{equation}\label{eqn36}
\eta^kT_\mcJ^k\mcD_\mcJ(f)=\mcD_\mcJ(f)
\end{equation}
only if 
\begin{equation}\label{eqn37}
h_f\circ F_w=\begin{cases}
H_{\mcE,I_{a_w}}\big((h_f\circ F_w)\cdot 1_{I_{a_w}}\big), &\text{ if } w\in \bigcup_{l=1}^k W_{l,a},\\
constant,&\text{ if } 1\leq |w|\leq k, w\notin\bigcup_{l=1}^k W_{l,a}.
\end{cases}
\end{equation}
where $I_{a_w}$ is the unique $\mcJ$ class such that $F_wI_{a_w}\subset I_a^{(|w|)}$ as shown in Lemma \ref{lemma328}.
\end{lemma}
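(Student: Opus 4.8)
\emph{Part (a)} is immediate from the shape of forms in $\mathcal M_\mcJ$. Writing $\tilde{\mcD}(h)=\sum_{x<y}j_{x,y}\big(h(x)-h(y)\big)^2$ with $j_{x,y}\ge 0$, testing against the indicators $1_{I_b}$ shows that the defining property of $\mathcal M_\mcJ$ forces $j_{x,y}=0$ whenever $x,y$ lie in different $\mcJ$-classes. Polarizing, $\tilde{\mcD}(f,g)=\sum_{x<y}j_{x,y}\big(f(x)-f(y)\big)\big(g(x)-g(y)\big)$, and in every surviving term $x,y$ lie in a common class $I_c$: if $c=a$ then $g(x)=g(y)=0$, and if $c\neq a$ then $f(x)=f(y)=0$, so the term vanishes.

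\emph{For the chain \eqref{eqn35}}, the plan is to reduce to the single-step bound $\eta\,T_\mcJ\mcD_\mcJ\le\mcD_\mcJ$ as quadratic forms. Since $T_\mcJ$ is monotone and positively homogeneous on $\mathcal M_\mcJ$ and maps $\mathcal M_\mcJ$ into itself, applying $\eta T_\mcJ$ repeatedly then yields $\eta^kT_\mcJ^k\mcD_\mcJ\le\eta^{k-1}T_\mcJ^{k-1}\mcD_\mcJ\le\cdots\le\mcD_\mcJ$. By part (a) the forms $\mcD_\mcJ$ and $T_\mcJ\mcD_\mcJ$ decouple over the classes, so the single-step bound reduces to $\eta T_\mcJ\mcD_\mcJ(f)\le\mcD_\mcJ(f)$ for $f\in\ell(I_a)$. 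For such $f$ put $h:=H_{\mcE,I_a}(f|_{I_a})\in\mathcal F$; by the tower property of restrictions and $\mcE|_{V_0}=\mcD$ one has $\mcD_\mcJ(f)=\mcD|_{I_a}(f|_{I_a})=\mcE|_{I_a}(f|_{I_a})=\mcE(h)$, while the balanced identity gives $\mcE(h)=\eta\sum_{i}\mcE(h\circ F_i)$. I would take as a competitor in the infimum defining $T_\mcJ\mcD_\mcJ(f)$ the function $\tilde f\in l(V_1)$ equal to $h$ on $I_a^{(1)}$ and to $0$ on $V_1\setminus I_a^{(1)}$; by Lemma \ref{lemma311}(a), $I_a^{(1)}\cap V_0=I_a$, so $\tilde f|_{V_0}=f$, and by Lemma \ref{lemma328} each cell $F_iV_0$ meeting $I_a^{(1)}$ does so in exactly one block $F_iI_{a_i}$, so that
\[
\mcD_\mcJ^{(1)}(\tilde f)=\!\!\sum_{i:\,F_iV_0\cap I_a^{(1)}\neq\emptyset}\!\!\mcD|_{I_{a_i}}\big((h\circ F_i)|_{I_{a_i}}\big)\le\sum_{i=1}^{m+n}\mcD\big((h\circ F_i)|_{V_0}\big)\le\sum_{i=1}^{m+n}\mcE(h\circ F_i)=\eta^{-1}\mcD_\mcJ(f),
\]
the two inequalities being the minimality of the trace $\mcD|_{I_{a_i}}$ over extensions to $V_0$ and of $\mcD=\mcE|_{V_0}$ over extensions to $K_\lambda$. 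Hence $\eta T_\mcJ\mcD_\mcJ(f)\le\eta\,\mcD_\mcJ^{(1)}(\tilde f)\le\mcD_\mcJ(f)$.

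\emph{For the equality case} I would run the same competitor one level deeper: with $\tilde f$ equal to $h$ on $I_a^{(l)}$ and $0$ elsewhere and $\sum_{|w|=l}\mcE(h\circ F_w)=\eta^{-l}\mcE(h)$, one gets, for $f\in\ell(I_a)$,
\[
\eta^lT_\mcJ^l\mcD_\mcJ(f)\le\eta^l\!\!\sum_{\substack{|w|=l\\ F_wV_0\cap I_a^{(l)}\neq\emptyset}}\!\!\mcD|_{I_{a_w}}\big((h\circ F_w)|_{I_{a_w}}\big)\le\eta^l\!\!\sum_{|w|=l}\mcD\big((h\circ F_w)|_{V_0}\big)\le\eta^l\!\!\sum_{|w|=l}\mcE(h\circ F_w)=\mcD_\mcJ(f).
\]
Assuming \eqref{eqn36} (which, by the part-(a) decoupling, reduces to equality of the two ends of this display for the function in $\ell(I_a)$ agreeing with $f$ on $I_a$, whose harmonic extension is still $h_f$), the chain \eqref{eqn35} forces $\eta^lT_\mcJ^l\mcD_\mcJ(f)=\mcD_\mcJ(f)$ for all $0\le l\le k$, hence every inequality above is an equality at every such level, and being equalities of sums of nonnegative terms they hold termwise. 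Then $\mcD\big((h\circ F_w)|_{V_0}\big)=\mcE(h\circ F_w)$ for all $|w|=l$ gives $h\circ F_w=H_{\mcE,V_0}\big((h\circ F_w)|_{V_0}\big)$; for $w\notin\bigcup_l W_{l,a}$ one also has $\mcD\big((h\circ F_w)|_{V_0}\big)=0$, whence $\mcE(h\circ F_w)=0$ and $h\circ F_w$ is constant; and for $w\in\bigcup_l W_{l,a}$ the termwise equality $\mcD|_{I_{a_w}}\big((h\circ F_w)|_{I_{a_w}}\big)=\mcD\big((h\circ F_w)|_{V_0}\big)$ gives $(h\circ F_w)|_{V_0}=H_{\mcD,I_{a_w}}\big((h\circ F_w)|_{I_{a_w}}\big)$, which together with the previous identity and compositionality of harmonic extensions yields $h\circ F_w=H_{\mcE,I_{a_w}}\big((h\circ F_w)\cdot1_{I_{a_w}}\big)$. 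This is exactly \eqref{eqn37}.

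\emph{The main obstacle} is the single-step estimate, and inside it the choice of competitor: one must localize the global harmonic function $h$ to the $\mcJ^{(l)}$-class $I_a^{(l)}$ so that it keeps the correct boundary values on $V_0$ while, thanks to Lemma \ref{lemma328} (a feature special to these Julia sets, not to general p.c.f. fractals), contributing only one trace term per cell — each dominated cellwise by $\mcE(h\circ F_w)$ — and then the self-similar scaling of $\mcE$ closes the estimate with the sharp factor $\eta^{-1}$. Everything after that, including the equality analysis, is a matter of tracking which of the finitely many intermediate inequalities must be equalities and reading each off as a harmonicity statement via the tower property of traces.
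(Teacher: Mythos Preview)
Your argument is correct and rests on the same competitor $h_f\cdot 1_{I_a^{(l)}}$ as the paper. For the chain \eqref{eqn35} you make explicit the one-step bound plus monotonicity of $T_\mcJ$, which the paper leaves tacit after proving $k=1$. The one genuine organizational difference is in the equality analysis: the paper inducts on $k$, feeding the $(k-1)$-step conclusion into each $h_f\circ F_i$ via the chain
\[
\eta^kT_\mcJ^k\mcD_\mcJ(f)\le \eta^k(T_\mcJ^{k-1}\mcD_\mcJ)^{(1)}(h_f\cdot 1_{I_a^{(1)}})\le \eta\sum_{i\in W_{1,a}}\mcD_\mcJ\big((h_f\cdot 1_{I_a^{(1)}})\circ F_i\big)=\mcD_\mcJ(f),
\]
whereas you bypass the induction by invoking the iterated trace identity $T_\mcJ^l\mcD_\mcJ(f)=\inf\{\mcD_\mcJ^{(l)}(\tilde f):\tilde f\in l(V_l),\ \tilde f|_{V_0}=f\}$ and reading off termwise equalities at each fixed level $l\le k$. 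Your route is a bit more transparent; the paper's route avoids appealing to the iterated trace formula. Both hinge on exactly the same structural input, Lemma~\ref{lemma328}, that each cell $F_wV_0$ meets $I_a^{(|w|)}$ in at most one $\mcJ$-block.
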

\begin{proof}
(a) is obvious. We will focus on (b). Fix $1\leq a\leq N$ and $f\in \ell(I_a)$, then we can show the $k=1$ case of (\ref{eqn35}) by the following computation,
\begin{equation}\label{eqn38}
\begin{aligned}
\eta T_\mcJ\mcD_\mcJ(f)&=\eta\mcD^{(1)}_\mcJ(H^{(1)}_{\mcD_\mcJ,{I_a}}f)\\
&\leq \eta\mcD^{(1)}_\mcJ(h_f\cdot 1_{I^{(1)}_a})=\eta\sum_{i=1}^{m+n} \mcD_\mcJ\big((h_f\cdot 1_{I^{(1)}_a})\circ F_i\big)\\
&=\eta\sum_{i\in W_{1,a}} \mcD_\mcJ\big((h_f\circ F_i)\cdot 1_{a_i}\big)\\
&\leq_{(*1)} \eta\sum_{i=1}^{m+n} \mcD(h_f\circ F_i)=\mcD(h_f|_{V_0})=\mcD|_{I_a}(f)=\mcD_\mcJ(f),
\end{aligned}
\end{equation}
where in the first inequality we use the fact that $H^{(1)}_{\mcD_\mcJ,{I_{a}}}f$ is the minimal energy extension, and in the second inequality we use Lemma \ref{lemma328}. For a general $f\in l(V_0)$, we apply (a) to show (\ref{eqn35}) still holds,
\[\begin{aligned}
\eta T_\mcJ\mcD_\mcJ(f)=\eta\sum_{a=1}^NT_\mcJ\mcD_{\mcJ}(f\cdot 1_{I_a})\leq \sum_{a=1}^N\mcD_{\mcJ}(f\cdot 1_{I_a})=\mcD_\mcJ(f),
\end{aligned}\]  
since $\ell(I_a)$'s are pairwise orthogonal for different $a$'s with respect to forms in $\mathcal M_\mcJ$ by (a). 

Next, we return to study {the conditions under which} (\ref{eqn35}) becomes an equality. Clearly, for (\ref{eqn38}) to be an equality, we need `$\leq_{(*1)}$' holds, which is equivalent to the condition (\ref{eqn37}) for $k=1$. To extend the observation to general $k\geq 1$, we  induct. 

Assuming that (\ref{eqn36}) holds, we have immediately that $\eta T_\mcJ\mcD_\mcJ(f)=\mcD_\mcJ(f)$ by (\ref{eqn35}). So (\ref{eqn37}) holds immediately for $|w|=1$. Next, we apply the induct{ive} assumption to get the following inequality, 
\[
\begin{aligned}
\eta^kT^k_\mcJ\mcD_\mcJ(f)&=\eta^k(T^{k-1}_\mcJ\mcD_\mcJ)^{(1)}(H^{(1)}_{T_\mcJ^{k-1}\mcD_\mcJ,{I_a}}f)\\
&\leq \eta^k(T^{k-1}_\mcJ\mcD_\mcJ)^{(1)}(h_f\cdot 1_{I^{(1)}_a})\\
&=\eta\sum_{i\in W_{1,a}} \eta^{k-1}T^{k-1}_\mcJ\mcD_\mcJ\big((h_f\cdot 1_{I^{(1)}_a})\circ F_i\big)\\
&\leq_{(*2)} \eta\sum_{i\in W_{1,a}} \mcD_\mcJ\big((h_f\cdot 1_{I^{(1)}_a})\circ F_i\big)\\
&=\eta\sum_{i\in W_{1,a}} \mcD\big(h_f\circ F_i\big)=\mcD(h_f|_{V_0})=\mcD|_{I_a}(f)=\mcD_\mcJ(f),
\end{aligned}
\]
where we use the fact $h_f\circ F_i=H_{\mcE,I_{a_i}}\big((h_f\circ F_i)\cdot 1_{I_{a_i}}\big)=H_{\mcE,I_{a_i}}\big((h_f\cdot 1_{I_a^{(1)}})\circ F_i\big)$ by (\ref{eqn37}) in the last line.

Clearly, when (\ref{eqn36}) holds, we {require} `$\leq_{(*2)}$' to be an equality. {This means}
\[\eta^{k-1}T^{k-1}_\mcJ\mcD_\mcJ\big((h_f\cdot 1_{I^{(1)}_a})\circ F_i\big)=\mcD_\mcJ\big((h_f\cdot 1_{I^{(1)}_a})\circ F_i\big), \quad\forall i\in W_{1,a}.\]
By  the inductive hypothesis, this implies that (\ref{eqn37}) holds for  $1\leq |w|\leq k-1$, with $h_f\circ F_i$ replacing $h_f$ and $a_i$ replacing $a$ for $i\in W_{1,a}$, and thus (\ref{eqn37}) holds for any $2\leq |w|\leq k$. 
\end{proof}

To understand the condition (\ref{eqn37}) better, we introduce the concept of flows. The concept is also known as  that of normal derivatives for its role in the Gauss-Green's formula, but we prefer the word `flow' for  the more intuitive physical picture.

\begin{definition}
Let $h$ be a harmonic function on  $K_\lambda$, i.e. $\mathcal{E}(h)=\mcD(h|_{V_0})$. 

(a). For $x\in V_0$, we define the flow (normal derivative) of $h$ at $x$ as $\partial_n h(x)=\mcD(h,1_{x})$.  We write $V_{0,h}:=\{x\in V_0:\partial_n h(x)\neq 0\}$ for the set of vertices with nonzero flows.

(b). For $c_i\in C$, we say $h$ has nonzero flow passing through $c_i$ if 
\[\partial_n (h\circ F_j)\big(R_\lambda(c_i)\big)\neq 0, \text{ for }j=i,i+1.\] 
We write $C_h:=\{c_i\in C:\partial_n (h\circ F_i)\big(R_\lambda(c_i)\big)\neq 0\}$ for the set of critical points with nonzero flows.
\end{definition}

We enumerate some simple properties of  flow.

\noindent(P1). $\sum_{x\in V_0}\partial_n h(x)=0$.

\noindent(P2). $\partial_n (h\circ F_i)\big(R_\lambda(c_i)\big)+\partial_n (h\circ F_{i+1})\big(R_\lambda(c_i)\big)=0$.

\noindent(P3). For $x\in V_0\cap F_i(V_0)$, we have the scaling identity $\partial_n h(x)=\eta\partial_n (h\circ F_i)\big(R_\lambda(x)\big)$.

The following observation is obvious. 
\begin{lemma}\label{lemma331}
Let $\mcJ$ be a preserved relation on $V_0$, with equivalence classes $I_a,a=1,2,\cdots,N$. Let $h$ be a harmonic function on $K_\lambda$ and $1\leq a\leq N$. If (\ref{eqn37}) holds for $k=1$ with $h$ replacing $h_f$, we have $C_h\subset I_a^{(1)}$, which implies that $C_h\neq C$.
\end{lemma}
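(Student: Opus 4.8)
\textbf{Proof proposal for Lemma \ref{lemma331}.}

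The plan is to unpack the two pieces of condition (\ref{eqn37}) at level $k=1$ and read off directly what they say about the flows of $h$. Recall that $C=\bigcup_{i=1}^{m+n}(F_i K_\lambda\cap F_{i+1}K_\lambda)$, so each critical point $c_i$ lies in exactly the two cells $F_iK_\lambda$ and $F_{i+1}K_\lambda$, with $c_i=F_i(R_\lambda(c_i))=F_{i+1}(R_\lambda(c_i))$. Thus, for any fixed $c_i\in C$, at least one of the indices $i,i+1$, say $j\in\{i,i+1\}$, must have $F_j V_0$ meeting $I_a^{(1)}$ or not; I will argue that if $c_i\notin I_a^{(1)}$ then $h$ has zero flow through $c_i$, which is exactly the statement $C_h\subset I_a^{(1)}$.

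First I would treat a cell index $j\in\{1,\dots,m+n\}$ with $j\notin W_{1,a}$, i.e. $F_jV_0\cap I_a^{(1)}=\emptyset$. By the second branch of (\ref{eqn37}) (applied with $h$ in place of $h_f$, as hypothesized), $h\circ F_j$ is constant on $V_0$, hence the harmonic function $h$ restricted to the cell $F_jK_\lambda$ is constant, and so $\partial_n(h\circ F_j)(y)=0$ for every $y\in V_0$; in particular this kills the flow contribution of that cell at any of its boundary points, including $R_\lambda(c_{j-1})$ and $R_\lambda(c_j)$. Next I would treat $j\in W_{1,a}$: here the first branch of (\ref{eqn37}) says $h\circ F_j=H_{\mcE,I_{a_j}}\big((h\circ F_j)\cdot 1_{I_{a_j}}\big)$, i.e. on this cell $h$ is the harmonic extension of a function supported on the class $I_{a_j}$. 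For such a harmonic function, the flow $\partial_n(h\circ F_j)(y)=\mcD(h\circ F_j,1_y)$ vanishes at every $y\in V_0\setminus I_{a_j}$ — this is the standard fact that a harmonic function whose boundary data is supported on a set $S$ has normal derivative supported on $S$ (it is the $\mcD$-orthogonality of the harmonic extension to functions vanishing on $S$). Since $F_jI_{a_j}\subset I_a^{(1)}$, any boundary point $R_\lambda(c_i)$ of cell $F_j$ with $F_j(R_\lambda(c_i))=c_i\notin I_a^{(1)}$ satisfies $R_\lambda(c_i)\notin I_{a_j}$, hence $\partial_n(h\circ F_j)(R_\lambda(c_i))=0$.

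Combining the two cases: fix $c_i\notin I_a^{(1)}$. Then neither $F_i(R_\lambda(c_i))=c_i$ nor $F_{i+1}(R_\lambda(c_i))=c_i$ lies in $I_a^{(1)}$, so by the argument above $\partial_n(h\circ F_j)(R_\lambda(c_i))=0$ for $j=i$ and $j=i+1$ regardless of whether $i$ or $i+1$ belongs to $W_{1,a}$; in particular $\partial_n(h\circ F_i)(R_\lambda(c_i))=0$, i.e. $c_i\notin C_h$. This proves $C_h\subset I_a^{(1)}$. Finally, to get $C_h\neq C$, note that if we had $C_h=C$ then $C\subset I_a^{(1)}$, i.e. all of $C$ lies in a single equivalence class of $\mcJ^{(1)}$; by Lemma \ref{lemma312} this forces $\mcJ=1$, contradicting that $\mcJ$ is a preserved relation appearing in the present analysis (and in any case $\mcJ=1$ is excluded, since for $\mcJ=1$ the space $\mathcal{M}_{\mcJ}$ and the whole flow discussion degenerate; one may also simply observe $I_a^{(1)}=V_1\supsetneq C$ is consistent only with the trivial relation).

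\textbf{Main obstacle.} The only delicate point is the precise justification that the normal derivative of a harmonic extension $H_{\mcE,I_{a_j}}(\,\cdot\,1_{I_{a_j}})$ is supported on $I_{a_j}$; this is where I expect to spend care, invoking the characterization $\partial_n g(y)=\mcD(g,1_y)$ together with the fact that the harmonic extension minimizes energy among functions agreeing with it on $I_{a_j}$, so that $\mcD(g,u)=0$ for all $u$ vanishing on $I_{a_j}$, in particular $u=1_y$ for $y\notin I_{a_j}$. Everything else is bookkeeping with the cell structure $c_i\in F_iK_\lambda\cap F_{i+1}K_\lambda$ and the definitions of $W_{1,a}$ and $C_h$.
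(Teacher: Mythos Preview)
Your proposal is correct; the paper declares this lemma ``obvious'' and gives no proof, and your argument is precisely the natural unpacking of condition~(\ref{eqn37}). One small refinement: in the variational justification you write $u=1_y$, but $1_y\notin\mathcal F$ on the fractal; it is cleaner to observe that $(h\circ F_j)|_{V_0}$ minimizes $\mcD$ among $v\in l(V_0)$ with $v|_{I_{a_j}}$ fixed (the two-step minimization), so $\partial_n(h\circ F_j)(y)=\mcD((h\circ F_j)|_{V_0},1_y)=0$ follows directly from the first-order condition at the $\mcD$ level. Your observation that the conclusion $C_h\neq C$ tacitly requires $\mcJ\neq 1$ (via Lemma~\ref{lemma312}) is accurate, and the paper only ever invokes the lemma in that setting.
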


Next, we prove that $T_\mcJ^k\mcD_\mcJ<\mcD_\mcJ$ for some $k\geq1$ by means of Lemmas \ref{lemma329} and \ref{lemma331}. First, {we} show an estimate for a single $f\in l(V_0)$.  

\begin{lemma}\label{lemma332}
Let $\mcJ$ be preserved relation on $V_0$, and $f\in l(V_0)\setminus l(V_0/\mcJ)$. Then there exists $k\geq 1$ such that 
\[\eta^kT_\mcJ^k\mcD_\mcJ(f)<\mcD_\mcJ(f).\]
\end{lemma}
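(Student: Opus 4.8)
The plan is to use Lemma~\ref{lemma329} as the engine: for any $f\in l(V_0)\setminus l(V_0/\mcJ)$ we must rule out the chain of equalities $\eta^k T_\mcJ^k\mcD_\mcJ(f)=\mcD_\mcJ(f)$ holding for \emph{all} $k\geq1$. Since $\mcD_\mcJ(f)=\sum_a \mcD_\mcJ(f\cdot 1_{I_a})$ and the summands are mutually orthogonal in $\mathcal{M}_\mcJ$ by Lemma~\ref{lemma329}(a), the monotone sequence in \eqref{eqn35} splits as a sum over $a$; because $f\notin l(V_0/\mcJ)$, there is at least one class $I_a$ on which $f$ is non-constant, so it suffices to show that for such an $a$ the scalar sequence $\eta^k T_\mcJ^k\mcD_\mcJ(f\cdot 1_{I_a})$ is eventually strictly less than $\mcD_\mcJ(f\cdot 1_{I_a})$. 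So from now on I fix such an $a$ and work with $f\in\ell(I_a)$ non-constant on $I_a$.

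First I would argue by contradiction: suppose $\eta^k T_\mcJ^k\mcD_\mcJ(f)=\mcD_\mcJ(f)$ for every $k\geq1$. Then \eqref{eqn37} of Lemma~\ref{lemma329}(b) holds for every $k$, and in particular, by Lemma~\ref{lemma331}, the harmonic extension $h_f=H_{\mcE,I_a}(f)$ has $C_{h_f}\subsetneq C$, i.e. there is a critical point $c_i$ with zero flow through it. Moreover \eqref{eqn37} forces a very rigid self-similar structure on $h_f$: on every cell $F_wK_\lambda$ with $w\in\bigcup_{l\le k}W_{l,a}$, the restriction $h_f\circ F_w$ is itself the harmonic extension of its boundary values supported on a single $\mcJ$-class $I_{a_w}$, and on all other cells of level $\le k$ it is constant. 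The idea is that this, propagated through all levels, is incompatible with $h_f$ being a non-constant harmonic function with strictly positive energy. Concretely, I would track the flow: the cells where $h_f$ is non-constant form a ``path-like'' subsystem (because $C_{h_f}\neq C$ confines the flow to few cells at each level, via the ring structure and Lemma~\ref{lemma313}-type reasoning), and along this subsystem the flow values scale by $\eta$ at each descent by (P3), while the number of relevant cells is controlled; combined with the fact that on each non-constant cell the energy is that of a harmonic function whose boundary data lives on one class only, one derives that the total energy $\mcE(h_f)=\mcD_\mcJ(f)$ would have to be zero, contradicting $f\notin l(V_0/\mcJ)$. Alternatively — and this may be cleaner — I would observe that the rigidity \eqref{eqn37} holding for all $k$ means $h_f$ restricted to each relevant subcell is again an extremizer of exactly the same type, so by a self-similarity/renewal argument the set $C_{h_f}$ together with its preimages under the branch maps would have to be preserved under the analogue of the dynamics, eventually forcing, via Lemma~\ref{lemma312} or a direct flow-conservation count, that $h_f$ is constant.

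The main obstacle I anticipate is turning ``$C_{h_f}\subsetneq C$ at every level'' into an actual contradiction: showing that a non-constant harmonic function on $K_\lambda$ \emph{cannot} have its nonzero-flow critical points confined to a proper subset at all scales simultaneously. This is where the specific geometry of the MS Julia set — the ring arrangement of $1$-cells, \eqref{eqn21}, and the flow properties (P1)–(P3) — must be used: one wants to say that if at some level no flow crosses a particular $c_i$, then the function decouples across $c_i$ into pieces each of which, after rescaling, is a harmonic extension of boundary data supported on a single $\mcJ$-class, and iterating this decoupling downward must terminate in constants. I would handle this by induction on level, using \eqref{eqn37} to pass from level $k$ to level $k+1$ (exactly as the inductive step inside the proof of Lemma~\ref{lemma329} does), and at the base using that $f$ is non-constant on $I_a$ to locate the first cell where non-triviality appears; the key quantitative input is (P3), which shows the flow cannot simply vanish without the function being locally constant. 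Once the contradiction is reached, the lemma follows: for the given $f$ there is some finite $k$ with $\eta^k T_\mcJ^k\mcD_\mcJ(f)<\mcD_\mcJ(f)$, and since \eqref{eqn35} is monotone, the same strict inequality persists for all larger $k$ as well.
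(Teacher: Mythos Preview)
Your setup matches the paper's: reduce to $f\in\ell(I_a)$ non-constant on $I_a$, assume $\eta^kT_\mcJ^k\mcD_\mcJ(f)=\mcD_\mcJ(f)$ for all $k$, and invoke Lemmas~\ref{lemma329} and~\ref{lemma331} to obtain $\#C_{h_f\circ F_w}<m+n$ for every word $w$. The reduction from general $f$ back to the $\ell(I_a)$ case via orthogonality is also the same.

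The gap is in the step you yourself flag as the obstacle. Neither of your two sketches --- an energy-vanishing argument via flow scaling, or a renewal argument forcing $h_f$ constant --- is what the paper does, and neither is easy to make work as stated. The paper's route is more concrete and combinatorial: it \emph{constructs} a specific word $w$ at which $\#C_{h_f\circ F_w}=m+n$, contradicting the bound directly. First, by an inductive descent (choosing at each step a child cell so as to decrease the count $N_k=\#V_{0,h_f\circ F_{w_k}}$, using a pigeonhole estimate on how the $N_k$ flow points plus the $2M_k$ new flows at critical points distribute among at least $M_k+1$ non-trivial children), one reaches a cell with $N_k=2$. Second, one refines further so that the two nonzero-flow boundary points lie in \emph{distinct} $1$-cells $F_iK_\lambda$, $F_jK_\lambda$. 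At that point an explicit electrical-network computation (restrict $\mcE$ to $\{x,y\}\cup C$ and apply the $\Delta$--$Y$ transform in the two relevant $1$-cells; the ring structure of the $1$-cells is what makes this work) shows that the harmonic function with sources only at $x,y$ has nonzero flow through \emph{every} $c\in C$. This is the missing idea: rather than arguing that the rigidity forces $h_f$ to be constant, one manufactures a cell where the flow configuration is so simple that one can compute directly that $C_{h_f\circ F_w}=C$.
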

\begin{proof}
{We begin by taking} $f\in \ell(I_a)$ for some $1\leq a\leq N$ which is non-constant on $I_a$. We will prove the lemma by contradiction.
Assume $\eta^kT_\mcJ^k\mcD_\mcJ(f)=\mcD_\mcJ(f),\forall k\geq 0$. Then, by {Lemmas} \ref{lemma329} and \ref{lemma331}, we have 
\begin{equation}\label{eqn39}
\# C_{h_f\circ F_w}<m+n, 
\end{equation}
for any finite word $w$ and $h_f=H_{\mathcal{E},I_a}(f)$. We will see this is impossible for some {word} $w$. We will {construct such a word in two steps.}\vspace{0.15cm} 

\noindent\textbf{Step 1.} \textit{We can find a finite word $w$ such that $\# V_{0,h_f\circ F_w}=2$. }
\vspace{0.15cm}

To achieve this, we will construct a sequence of finite words $w_k$ of length $k$ inductively. For convenience, we write $N_k:=\# V_{0,h_f\circ F_{w_k}}$ and $M_k:=\# C_{h_f\circ F_{w_k}}$. We start from $w_0=\emptyset$,  and choose $w_1,w_2,\cdots$ in accordance with the following rules,  stopping when $N_k=2$. 

Assume we have chosen $w_k$ with $N_k\geq 3$, we will choose $w_{k+1}=w_k i$ with $1\leq i\leq m+n$ following two possible cases.

\textit{Case 1.1: $M_k=0$.} In this case, we simply choose an $1\leq i\leq m+n$ such that $\#V_{0,h_f\circ F_{w_ki}}>0$. Let $w_{k+1}=w_k i$ and  clearly we have $\#N_{k+1}\leq \# N_k$.

\textit{Case 1.2: $1\leq M_k\leq m+n-1$.} In this case, we have at least $M_k+1$ different $i$'s such that $\#V_{0,h_f\circ F_{w_ki}}>0$. Moreover,
\[\sum_{i=1}^{m+n} \#V_{0,h_f\circ F_{w_ki}}=2M_k+N_k. \]
Thus, we can choose $w_{k+1}=w_ki$ such that 
\begin{equation}\label{eqn310}
\#N_{k+1}\leq \frac{N_k}{M_k+1}+\frac{2M_k}{M_k+1}<N_k,
\end{equation}
where the last `$<$' holds since we always have $N_k\geq 3$ (otherwise, we will stop the construction) and $M_k\geq 1$. 

Continuing the construction, we can easily see that $N_0\geq N_1\geq N_2\geq \cdots$. However, Case 1.1 cannot repeat consecutively for infinitely many  iterations, otherwise, since it does not introduce any new flow, after long iterations we will have a small cell $F_{w}K_\lambda$ which contains only $1$ original flow at its boundary, which is impossible by (P1). Each time we face Case {1.2}, we have a strict decrease in  $N_k$.  Eventually, we can find a $k\geq 1$ such that $\#N_k=2$. \vspace{0.15cm}

\noindent\textbf{Step 2.} \textit{We can find a finite word $w$ such that $V_{0,h_f\circ F_w}=\{x,y\}$, with $x\in F_iK_\lambda$, $y\in F_jK_\lambda$ {and} $i\neq j$.}
\vspace{0.15cm}

Let $w$ be the word found in Step 1. If $w$ satisfies the condition, we are done. Otherwise, we have $V_{0,h_f\circ F_w}\subset F_iK_\lambda$ for some $1\leq i\leq m+n$. We may face  either

\textit{Case 2.1: $C_{h_f\circ F_w}=\emptyset$,} or \textit{Case 2.2: $C_{h_f\circ F_w}\neq\emptyset$.}

{Case 2.2 is clearly impossible} since that would imply $C=C_{h_f\circ F_w}$ by the ring structure of $1$-cells of $K_\lambda$. {Hence,} only Case 2.1 is possible, and we {may} choose $w'=wi$. After repeating the above argument finitely many times, we will find a finite word $w''$ satisfying the desired condition.\vspace{0.15cm}

\begin{figure}[htp]
	\includegraphics[width=5cm]{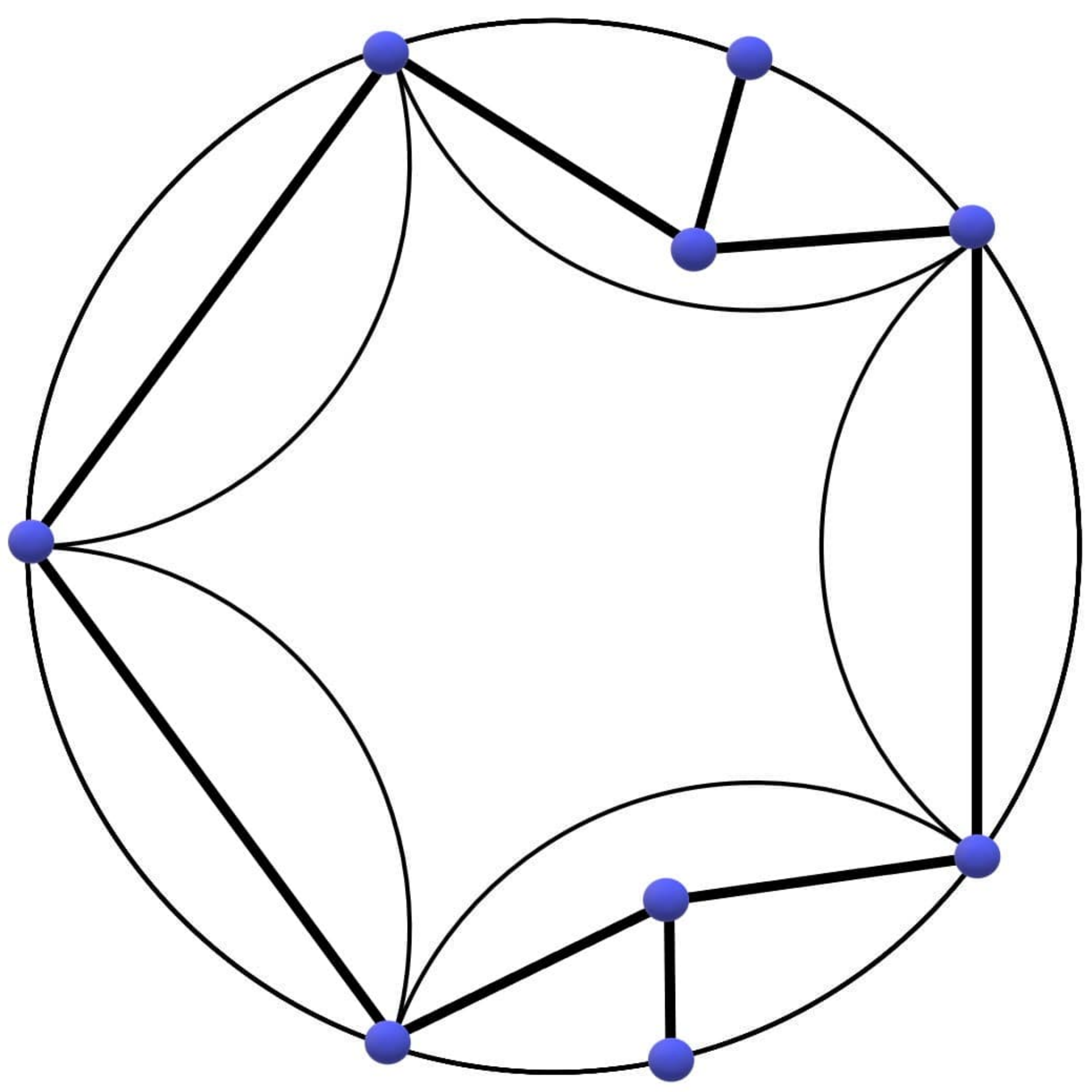}
	\begin{picture}(0,0)
	\put(-57,-4){$x$}
	\put(-48,140){$y$}
	\end{picture}
	\caption{The restriction of $\mcE$ onto $C\cup\{x,y\}$.}\label{fig7}
\end{figure}

Now, we look at the word $w$ chosen in Step 2. There are $x,y\in V_0$ such that $h_f\circ F_w$ is harmonic in $K_\lambda\setminus\{x,y\}$. In particular, by  restricting $\mcE$ to $\{x,y\}\cup C$ and  applying the $\Delta-Y$ transformation ({see books \cite{ki3,s3} for the formulas of $\Delta-Y$ transformation}) in the cells $F_iK_\lambda, F_jK_\lambda$ (see Figure \ref{fig7} for an illustration of the restricted electrical network), we can easily see that $h_f\circ F_w$ has nonzero flow at each $c\in C$. This contradicts (\ref{eqn39}).

Finally, for a general $f\in l(V_0)\setminus l(V_0/\mcJ)$, there is at least one $1\leq a\leq N$ such that $f\cdot 1_{I_a}\in \ell (I_a)$ is non-constant on $I_a$. So there is $k\geq 1$ such that $\eta^kT^k_\mcJ\mcD_\mcJ(f\cdot 1_{I_a})<\mcD_\mcJ(f\cdot 1_{I_a})$, and then,
\[\eta^kT^k_\mcJ\mcD_\mcJ(f)=\sum_{1\leq a'\leq N}\eta^kT^k_\mcJ\mcD_\mcJ(f\cdot 1_{I_{a'}})<\sum_{1\leq a'\leq N}\mcD_\mcJ(f\cdot 1_{I_{a'}})=\mcD_\mcJ(f),\]
which finishes the proof.
\end{proof}

We are now ready to prove Proposition \ref{prop326}.
\begin{proof}[Proof of Proposition \ref{prop326}]
	Define
	\[B_\mcJ=\{g\in l(V_0):
	\max \{|g(x)-g(y)|:x\mcJ y\}=1,\text{ and }
	\|g\|_{l^\infty(V_0)}=1/2
	\}
	\]
	as a compact subset of $l^\infty(V_0)$. Then for any fixed $f\in l(V_0)\setminus l(V_0/\mcJ)$, we can find $g\in B_\mcJ\cap \{cf+u:c\in \mathbb{R},u\in l(V_0/\mcJ)\}$, and it is easy to see that 
	\[\frac{T_\mcJ^k\mcD_\mcJ(f)}{\mcD_\mcJ(f)}=\frac{T_\mcJ^k\mcD_\mcJ(g)}{\mcD_\mcJ(g)},\quad\forall k\geq 1.\]
	In addition, since $\mcD_\mcJ(g)>0$ on $B_\mcJ$, the function $\frac{T_\mcJ^k\mcD_\mcJ(g)}{\mcD_\mcJ(g)}$ is continuous on $B_\mcJ$. 
	
	We now claim that $\eta^kT_\mcJ^k\mcD_\mcJ<\mcD_\mcJ$ for some $k\geq 1$. Assume {not}, then for any $k\geq 1$, there exists $f_k\in l(V_0)\setminus l(V_0/\mcJ)$ such that $\eta^kT_\mcJ^k\mcD_\mcJ(f_k)=\mcD_\mcJ(f_k)$. In addition, we can require that $f_k\in B_\mcJ$ by the previous argument. Thus, there exists a subsequence $k_l,l\geq 1$ such that $f_{k_l}$ converges to a function $f\in B_\mcJ$. Clearly,
	\[\eta^kT_\mcJ^k\mcD_\mcJ(f)=\lim\limits_{l\to\infty}\eta^kT_\mcJ^k\mcD_\mcJ(f_{k_l})=\lim\limits_{l\to\infty}\mcD_\mcJ(f_{k_l})=\mcD(f), \quad\forall k\geq 1.\]
	This contradicts Lemma \ref{lemma332}.
	
	Lastly, the proposition follows from the inequality
	\[\overline{\rho}_{\mcJ,k}\leq \overline{\rho}_{\mcJ,k}(\mcD)=\sup_{f\in l(V_0)\setminus l(V_0/\mcJ)}\frac{T_\mcJ^k\mcD_\mcJ(f)}{\mcD_\mcJ(f)}=\sup_{f\in B_\mcJ}\frac{T_\mcJ^k\mcD_\mcJ(f)}{\mcD_\mcJ(f)}<\eta^{-k}.\]
\end{proof}

Finally, we conclude the proof  of {the} main result in this section. 
\begin{proof}[Proof of Theorem \ref{thm32}]
	The existence of a symmetric form follows from Proposition \ref{prop317} and Proposition \ref{prop323}. The uniqueness follows from Lemma \ref{lemma325}, Proposition \ref{prop326}, Theorem \ref{thm38} and the remark after Theorem \ref{thm38}.
\end{proof}

\subsection{Examples} 
The MS Julia sets can be quite complicated in general. We are only able to  compute the exact forms for some simple examples. 

\begin{example}
	Consider $R_\lambda(z)=z^n+\frac{\lambda}{z^m}$ with $\theta_\lambda=\frac{l}{n(m+n)}$ for $m\geq 1$, $n\geq 2$ and $1\leq l\leq n-1$. {For $m,n$ fixed, these parameters correspond to the Julia sets with the smallest possible }$\tilde{V_0}=\{\psi_\lambda^{-1}[\frac{k}{m+n}]:1\leq k\leq m+n-1\}$. See Figure \ref{fig8} for {examples of such sets.} 
	
	\begin{figure}[htp]
		\includegraphics[width=4.7cm]{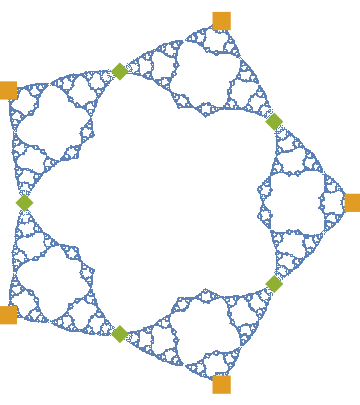}\qquad
		\includegraphics[width=5cm]{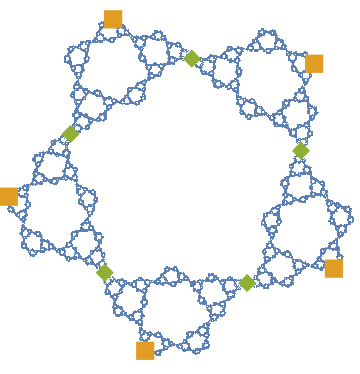}
		\begin{picture}(0,0)
		\put(-305,-8){$m=3,n=2$\emph{ and }$\theta_\lambda=\frac{1}{10}$}
		\put(-130,-8){$m=2,n=3$\emph{ and }$\theta_\lambda=\frac{2}{15}$}
		\end{picture}
		\caption{Some MS Julia sets with smallest $\tilde{V}_0$.}\label{fig8}
	\end{figure}
	
	{Due to} Theorem \ref{thm32}, there exists exactly one balanced form on $K_\lambda$. To simplify the calculation, we consider $V'_0:=\{p_0,p_1,p_2\}$ with 
	\[\psi_\lambda(p_0)=[0],\quad\psi_\lambda(p_1)=[\frac{l}{m+n}],\quad\psi_\lambda(p_2)=[\frac{m+l}{m+n}],\]
	and set $V'_1=C\cup \tilde{V}_0$. By a simple computation (using the $\Delta-Y$ transformation to restrict the form on $V_1'$ to $V_0'$), we get the exact value of the renormalization constant $\eta$ to be
	\[\eta=\frac{1}{2}+\frac{mn}{2(m+n)}+\frac{1}{2}\sqrt{(\frac{mn}{m+n}-1)^2+\frac{8l(n-l)}{m+n}},\] 
	with the form $\mcD|_{V_0'}$ as shown in Figure \ref{fig9}. {We omit the computation here, and readers can find a similar computation for the pentagasket in the book \cite{s3}.}
	
	\begin{figure}[htp]
	\includegraphics[width=3.5cm]{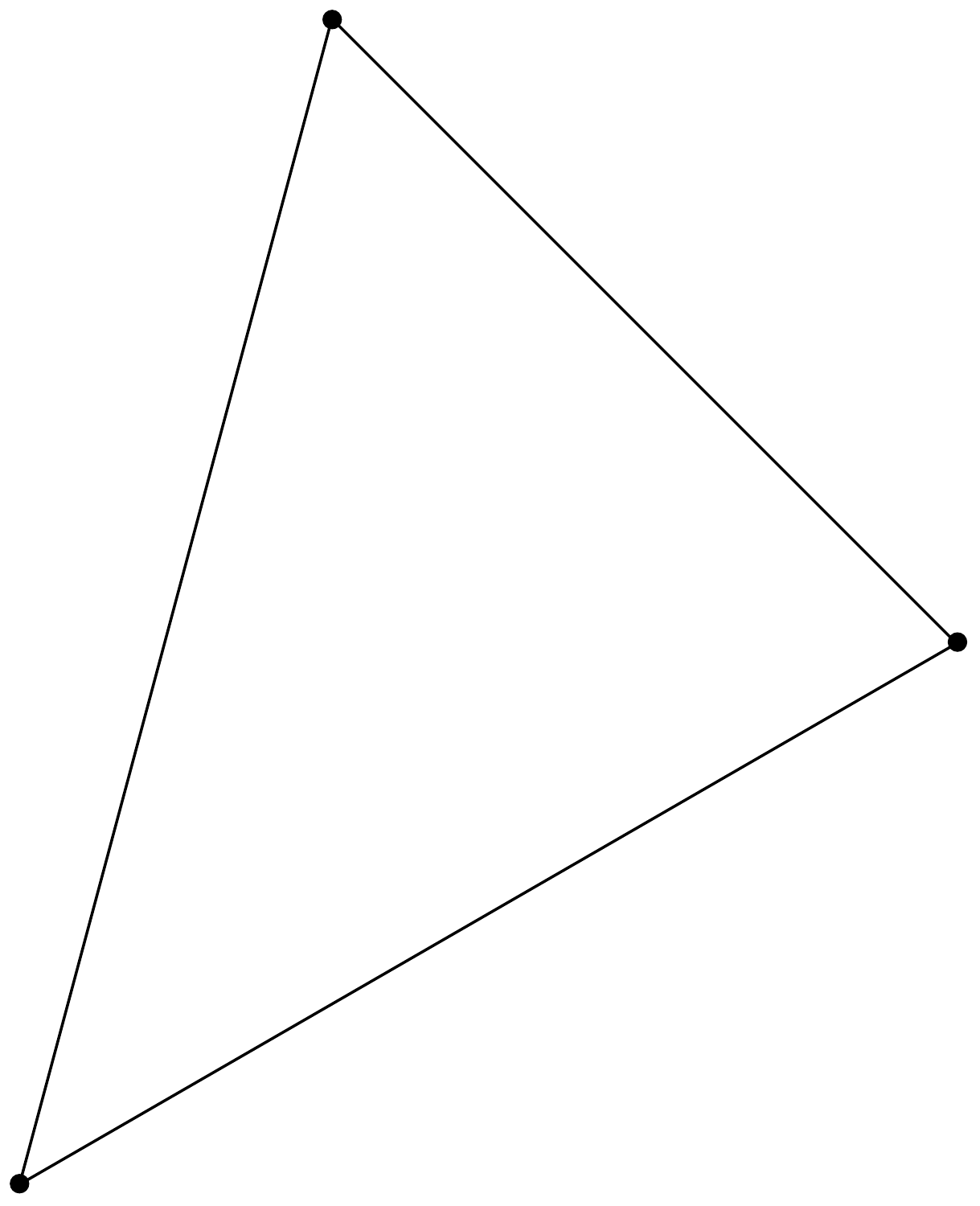}
	\begin{picture}(0,0)
	\put(-4,55){{$p_0$}}
	\put(-75,125){{$p_1$}}
	\put(-114,0){{$p_2$}}
	\put(-30,85){$1+\frac{m(\eta-1)}{l}$}
	\put(-55,20){$1+\frac{m(\eta-1)}{n-l}$}
	\put(-95,60){$\eta$}
	\end{picture}
	\caption{The restriction of $\mcD$ to $V_0'$.}\label{fig9}
	\end{figure}
\end{example}

\begin{example}
For the Julia set in Example \ref{example22}, {we have experimentally that} $\eta^{-1}\approx 0.64735$. On the other hand, one can check easily that $T_\mcJ\mathcal{D}_\mcJ=\frac{1}{2}\mathcal{D}_\mcJ$ for the relation $\mcJ$ shown in Example \ref{example318}. In particular, this shows $\eta T_\mcJ\mathcal{D}_\mcJ<\mcD_\mcJ$ as Proposition \ref{prop326} states.
\end{example}

\section{Other finitely ramified Julia sets of rational maps}
In this last section of the paper, we look at some other Julia sets also associated to rational maps $R_\lambda(z)=z^n+\frac{\lambda}{z^m}$ with $n\geq 2$, $m\geq 1$, which are not MS maps. Instead of providing a {full} story as {done in} Section 3, this section {is more explorative}, and we hope that the observations may lead to further studies.

In particular, we focus on {the} simple class of rational maps $R_\lambda$ whose critical set possesses a real fixed point $c$. We have $c=(\frac{n}{m+n})^{\frac{1}{n-1}}$ in this case since $\lambda=\frac{nc^{n+m}}{m}$. Clearly, $c$ is a superattracting fixed point, so the immediate attracting basin of $c$  is excluded from the Julia set $K_\lambda$. See Figure \ref{fig10} for some examples {of these Julia sets}.

\begin{figure}[htp]
	\includegraphics[width=4.5cm]{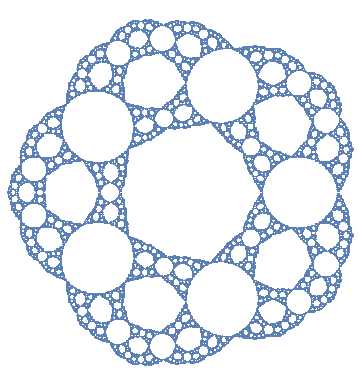}\quad
	\includegraphics[width=4.5cm]{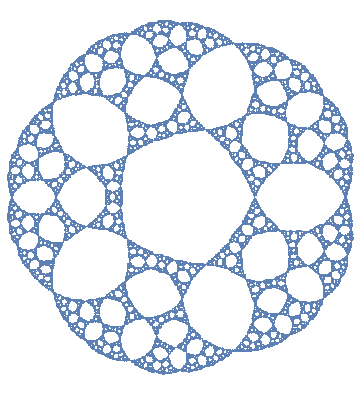}\quad
	\includegraphics[width=4.5cm]{8_1_StationarySet_n_3_m_3.png}
	\begin{picture}(0,0)
	\put(-328,65){$q_0$}
	\put(-310,63){$\tilde{B}_\lambda$}
	\put(-319.2,64.2){$\bullet$}
	\put(-291,64.2){$\bullet$}
	\put(-285,65){$p_0$}
	\put(-370,-10){$m=2, n=3$}
	\put(-236,-10){$m=3, n=2$}
	\put(-99,-10){$m=3, n=3$}
	\end{picture}
	\caption{Julia sets with {a fixed real critical point.}}\label{fig10}
\end{figure} 

We {let} $B_\lambda$ {denote} the immediate attracting basin of $\infty$, $\Gamma_\lambda=R_\lambda^{-1}(B_\lambda)$ {denote} the trap door and $\tilde{B}_\lambda$ {denote} the immediate basin of the real $c$. Then we have two local cut points $p_0,q_0$, namely 
\[\{p_0\}=\overline{B}_\lambda\cap \overline{\tilde{B}_{\lambda}},\quad \{q_0\}=\overline{\Gamma}_\lambda\cap \overline{\tilde{B}_{\lambda}}.\]
Since the Julia set $K_\lambda$ admits the rotation symmetry, we define 
\[p_l=e^{\frac{2l\pi i}{m+n}}p_0,\text{ }q_l=e^{\frac{2l\pi  i}{m+n}}p_0, \text{ for }0\leq l\leq m+n-1.\]
The vertex set $\{p_l,q_l\}_{l=0}^{m+n-1}$ cuts the Julia set $K_\lambda$ into $m+n$ connected components. We denote {by} $K_{\lambda,l}$ the closure of one of the components such that $K_{\lambda,l}$ contains $\{p_l,q_l,p_{l-1},q_{l-1}\}$ for $l=1,2,\dots m+n$, using the cyclic notation $m+n=0$, and call them the \textit{$1$-cells} of $K_\lambda$. 

It is not hard to see that $R_\lambda^{-1}(\{p_l,q_l\}_{l=0}^{m+n-1})$ is a set of $2(m+n)^2$ vertices that contains $\{p_l,q_l\}_{l=0}^{m+n-1}$ and cuts $K_\lambda$ into $(m+n)^2$ pieces. For each pair of $1\leq k,l\leq m+n$, we denote ${\Psi_{k,l}}$ for the local inverse of $R_\lambda$ such that ${\Psi_{k,l}}:{K_{\lambda,k}\to K_{\lambda,l}}$. Then we have 
\[K_{\lambda,l}=\bigcup_{k=1}^{m+n}\Psi_{k,l}(K_{\lambda,k}).\]
We still have the same conjugacy of $R_\lambda$ on the boundary $\beta_\lambda$ of $B_\lambda$ to the angle mapping $\Phi_n$ on the unit {circle} $\mathbb T$ as described in Section \ref{sec2}, and we have $R_\lambda(p_l)=R_\lambda(q_l)$. These facts determine the position of each level-2 cells $\Psi_{k,l}(K_{\lambda,k})$. See Figure \ref{fig13} for an illustration (where we take  $m=2,n=3$).

\begin{figure}
	\includegraphics[width=4.8cm]{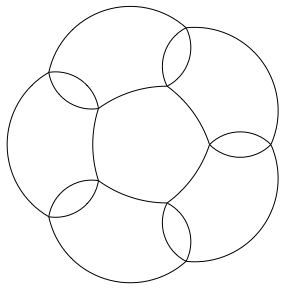}\qquad\quad
	\includegraphics[width=4.8cm]{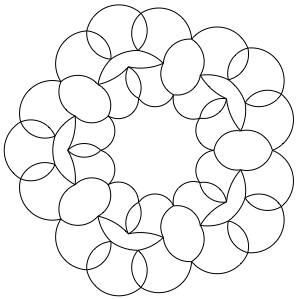}
	\begin{picture}(0,0)
	\put(-210,90){$K_{\lambda,1}$}
	\put(-264,110){$K_{\lambda,2}$}
	\put(-297,70){$K_{\lambda,3}$}
	\put(-264,20){$K_{\lambda,4}$}
	\put(-210,37){$K_{\lambda,5}$}
	
	\put(-27,79){$\Psi_{1,1}$}
	\put(-30,98){$\Psi_{2,1}$}
	\put(-46,110){$\Psi_{3,1}$}
	\put(-57,88){$\Psi_{4,1}$}
	\put(-45,76){$\Psi_{5,1}$}
	
	\put(-73,120){$\Psi_{4,2}$}
	\put(-96,120){$\Psi_{5,2}$}
	
	\put(-27,50){$\Psi_{5,5}$}
	\put(-30,32){$\Psi_{4,5}$}
	\end{picture}
	\caption{An illustration of level-$1$ cells and the $\Psi_{k,l}$ mappings {for $(m,n)=(2,3)$.}}\label{fig13}
\end{figure} 

 By iterating the mappings in {the} proper {way}, we see that  the diameters of the higher level cells {shrink} to $0$. More precisely, we have 
\[\diam(\Psi_{l_{k-1},l_k}\cdots\Psi_{l_1,l_2} \Psi_{l_{0},l_{1}}(K_{\lambda,l_0}))\to 0,\text{ as }k\to\infty,\] 
for any infinite sequence $\{l_k\}_{k\geq 0}$. This provides us {with} a graph-directed structure of $K_\lambda$ (see \cite{CQ,HN}). 

We consider the forms $\mcE_k,k=1,2,\cdots,m+n$ on $K_{\lambda,k}$'s satisfying the graph-directed invariance, i.e. 
\begin{equation}\label{eqn41}
\mcE_k(f)=\eta\sum_{l=1}^{m+n}\mcE_l(f\circ \Psi_{l,k}),
\end{equation}
for some positive constant $\eta$ independent of $k$.

For simplicity, we still consider the rotationally symmetric solutions, i.e. we require 
\begin{equation}\label{eqn42}
\mcE_k(f)=\mcE_{k+l}(f(e^{-\frac{2l\pi i}{m+n}}\bullet))
\end{equation} 
holds for any pair of $1\leq k,l\leq m+n$. Then (\ref{eqn41}) is simplified to an equation of the form (\ref{eqn32}) (with different contractive mappings of course),
\begin{equation}\label{eqn43}
\mcE_1(f)=\eta\sum_{l=1}^{m+n}\mcE_1(f\circ \Psi_{l,1}(e^{\frac{2(l-1)\pi i}{m+n}}\bullet)).
\end{equation} 
In particular, the existence of a solution to (\ref{eqn41}) is equivalent to the existence of a rotationally symmetric solution to (\ref{eqn41}). This can be easily proven with the Hilbert's projective metric \cite{M2} and the Brouwer fiexed point theorem. See \cite{B},  Proposition 6.21, for a similar result on p.c.f. self-similar sets, whose proof can be easily modified for our purpose.

We again apply Sabot's criteria, Theorem \ref{thm38}, to study the existence of forms that satisfy (\ref{eqn43}). In particular, there are only two non-trivial preserved relations $\mcJ_1$ and $\mcJ_2$ on $V:=\{p_0,p_1,q_0,q_1\}$, depicted in Figure \ref{fig11}. 

\vspace{0.15cm}
1. $\mcJ_1$ consists of two equivalence classes $\{p_0,q_0\}$ and $\{p_1,q_1\}$.

2. $\mcJ_2$ consists of two equivalence classes $\{p_0,p_1\}$ and $\{q_0,q_1\}$. 
\vspace{0.15cm}

\begin{figure}[htp]
	\includegraphics[width=3cm]{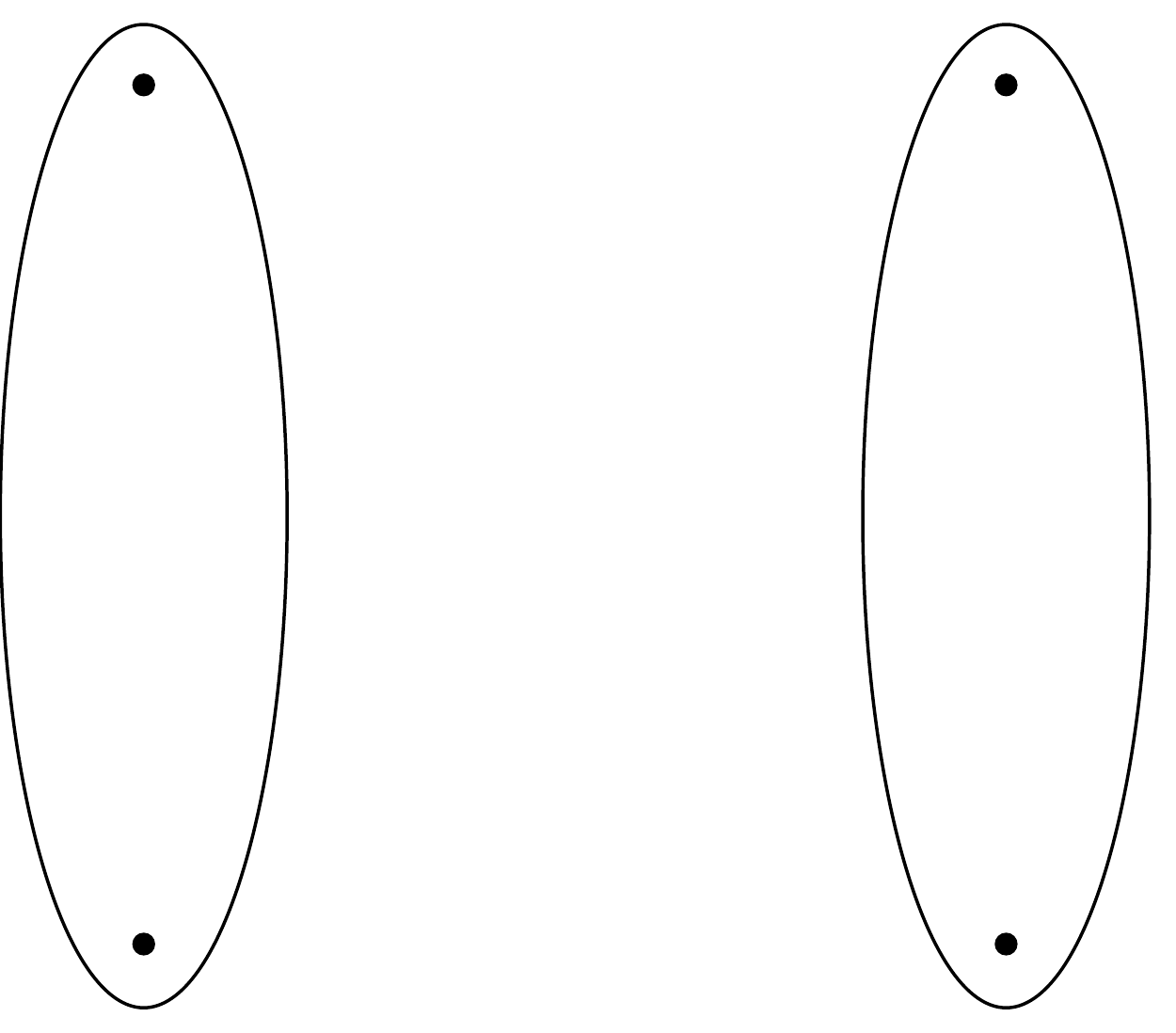}\qquad\qquad\qquad
	\includegraphics[width=3cm,angle=90]{J1.pdf}
	\begin{picture}(0,0)
	\put(-196,-10){$\mcJ_1$}
	\put(-49,-10){$\mcJ_2$}
	\end{picture}
	\caption{The non-trivial preserved relations on $\{p_0,p_1,q_0,q_1\}$.}\label{fig11}
\end{figure}

It is easy to compute the exact values of the $\overline{\rho},\underline{\rho}$'s for $\mcJ_1$ and $\mcJ_2$. 

\begin{proposition}\label{prop42}
For $\mcJ_1,\mcJ_2$ defined above, we have 
	\[\underline{\rho}_{\mcJ_1}=\overline{\rho}_{\mcJ_1}=\frac{1}{2},\quad \underline{\rho}_{V/\mcJ_1}=\overline{\rho}_{V/\mcJ_1}=\frac{1}{m}+\frac{1}{n}, \quad\overline{\rho}_{\mcJ_2}=\frac{1}{n},\quad\underline{\rho}_{V/\mcJ_2}=\frac{mn}{m+n}.\]

As a consequence, we have

(a). There {are} no  graph-directed invariant forms on $K_\lambda$ if $\frac{1}{m}+\frac{1}{n}<\frac{1}{2}$. In addition, the same result holds for $m=n=4$.

(b). There is a unique rotationally symmetric graph-directed invariant forms on $K_\lambda$ if $\frac{1}{m}+\frac{1}{n}>\frac{1}{2}$. 
\end{proposition}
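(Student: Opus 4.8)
The plan is to carry out the reduction indicated in the text: the graph-directed equation (\ref{eqn41}) has a rotationally symmetric solution if and only if the single-cell self-similar equation (\ref{eqn43}) on $K_{\lambda,1}$ does, where $K_{\lambda,1}$ carries the boundary set $V=\{p_0,p_1,q_0,q_1\}$ and the i.f.s. $\{\Psi_{l,1}(e^{2(l-1)\pi i/(m+n)}\bullet)\}_{l=1}^{m+n}$. Since (\ref{eqn43}) is of the form (\ref{eqn32}), the question is settled by Sabot's Theorem~\ref{thm38} applied with the trivial group $\mathcal G=\{\mathrm{id}\}$, once we have (i) the list of non-trivial preserved relations on $V$, and (ii) the associated constants $\underline\rho,\overline\rho$.

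For (i): since $|V|=4$ a preserved relation is a partition, and the level-$1$ dynamics — with the identity $R_\lambda(p_l)=R_\lambda(q_l)$ playing the role that the cut-point identity plays in Section~3 — forces a non-trivial preserved relation to have two two-element classes; checking the level-$1$ refinement of each candidate partition against Definition~\ref{def35}(b) leaves exactly $\mathcal J_1$ and $\mathcal J_2$, and neither is contained in the other. For (ii): each quotient $V/\mathcal J_i$ is a two-point set, so $T_{V/\mathcal J_i}$ is multiplication by the scalar given by the effective conductance between the two classes in the level-$1$ quotient network; under $\mathcal J_1$ the two classes lie on the ring of $m+n$ level-$1$ cells joined by the two arcs of $m$ and $n$ cells in parallel, so $\underline\rho_{V/\mathcal J_1}=\overline\rho_{V/\mathcal J_1}=\tfrac1m+\tfrac1n$, while under $\mathcal J_2$ they are joined by an $m$-cell parallel bundle in series with an $n$-cell parallel bundle, so $\underline\rho_{V/\mathcal J_2}=\overline\rho_{V/\mathcal J_2}=\tfrac{mn}{m+n}$; these are $\Delta$-$Y$ computations of the kind in the examples of Section~3. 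For $\mathcal M_{\mathcal J_1}$, non-degeneracy forces every form to be $\mathcal D_{a,b}(f)=a(f(p_0)-f(q_0))^2+b(f(p_1)-f(q_1))^2$, and computing $T_{\mathcal J_1}$ on this two-parameter family — the crux being that a perturbation with $f(p_0)\ne f(q_0)$ is transmitted across exactly two level-$1$ cells — shows the $\mathcal J_1$-complementary direction is renormalized rigidly by $\tfrac12$, so $\underline\rho_{\mathcal J_1}=\overline\rho_{\mathcal J_1}=\tfrac12$. Finally $\overline\rho_{\mathcal J_2}\le\tfrac1n$ follows by testing the single form $\mathcal D(f)=(f(p_0)-f(p_1))^2+(f(q_0)-f(q_1))^2$ and bounding $T_{\mathcal J_2}\mathcal D$ from above by one explicit extension.

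The conclusions then drop out of Theorem~\ref{thm38}. If $\tfrac1m+\tfrac1n>\tfrac12$, then $\overline\rho_{\mathcal J_1}=\tfrac12<\tfrac1m+\tfrac1n=\underline\rho_{V/\mathcal J_1}$ and $\overline\rho_{\mathcal J_2}=\tfrac1n\le\tfrac12<1\le\tfrac{mn}{m+n}=\underline\rho_{V/\mathcal J_2}$ (using $m,n\ge2$); as $\mathcal J_1,\mathcal J_2$ are the only non-trivial preserved relations and neither contains the other, Theorem~\ref{thm38}(b) yields exactly one $\{\mathrm{id}\}$-symmetric solution of (\ref{eqn43}), hence exactly one rotationally symmetric graph-directed invariant form — this is (b). If $\tfrac1m+\tfrac1n<\tfrac12$, apply Theorem~\ref{thm38}(a) with $\mathcal J=\mathcal J'=\mathcal J_1$: since $\underline\rho_{V/\mathcal J_1}=\tfrac1m+\tfrac1n<\tfrac12=\underline\rho_{\mathcal J_1}$, equation (\ref{eqn43}) has no solution at all, and since the existence of a graph-directed invariant form is equivalent to the existence of a rotationally symmetric one, there is none — the first half of (a).

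The borderline $m=n=4$, where $\tfrac1m+\tfrac1n=\tfrac12$ and Theorem~\ref{thm38} only yields non-strict inequalities, is where I expect the real work. I would argue by contradiction: by the graph-directed analogues of the monotonicity estimates of Section~3 (Lemma~\ref{lemma324} for the degeneration $\mathcal D_{V/\mathcal J}$, and Lemma~\ref{lemma329} for $\mathcal D_{\mathcal J}$), any solution $\mathcal D$ of (\ref{eqn43}) satisfies $\overline\rho_{\mathcal J}\le\eta^{-1}\le\underline\rho_{V/\mathcal J}$ for every non-trivial preserved $\mathcal J$, so $\mathcal J=\mathcal J_1$ pins the renormalization factor to $\eta^{-1}=\tfrac12$. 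Substituting $\eta^{-1}=\tfrac12$ together with the explicit low-dimensional actions of $T_{\mathcal J_1}$ and $T_{\mathcal J_2}$ back into $\mathcal D$, and invoking the extra involution $\sigma:z\mapsto\lambda^{1/n}/z$ of $R_\lambda$ present precisely when $m=n$ (which interchanges $p_l\leftrightarrow q_l$) to cut the number of free parameters, one verifies by a direct effective-resistance computation on the four-vertex network that no non-degenerate $\mathcal D$ can satisfy $T\mathcal D=\tfrac12\mathcal D$, completing (a). This last step sits outside the clean dictionary of Sabot's theorem and requires extracting the residual rigidity from the concrete geometry, so I single it out as the main obstacle.
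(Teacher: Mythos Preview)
Your approach matches the paper's: compute the four $\rho$-values and feed them into Sabot's Theorem~\ref{thm38}. The paper in fact does not give a proof of this proposition beyond stating the $\rho$-values and invoking Sabot; the $m=n=4$ case is explicitly \emph{claimed without details} (see the Remark following the statement), so your sketch already contains more argument than the paper itself offers.

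Two points worth flagging. First, your chain $\overline\rho_{\mathcal J_2}=\tfrac1n\le\tfrac12<1\le\tfrac{mn}{m+n}$ for part~(b) tacitly assumes $m\ge2$, but the setting is $m\ge1$; when $m=1$ one has $\tfrac{mn}{m+n}=\tfrac{n}{n+1}<1$. The conclusion $\tfrac1n<\tfrac{mn}{m+n}$ is still true (it is equivalent to $m+n<mn^2$, which holds for all $m\ge1,\ n\ge2$), so this is a repairable slip in the inequality chain rather than a real gap.

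Second, for $m=n=4$ your pinning argument $\eta^{-1}=\tfrac12$ via $\mathcal J_1$ uses the two-sided bound $\overline\rho_{\mathcal J}\le\eta^{-1}\le\underline\rho_{V/\mathcal J}$ for any solution $\mathcal D$. The upper bound $\eta^{-1}\le\underline\rho_{V/\mathcal J}$ is Lemma~\ref{lemma324}; the lower bound $\overline\rho_{\mathcal J}\le\eta^{-1}$ is \emph{not} a general consequence of Lemma~\ref{lemma329} as stated (that lemma is tailored to the specific form $\mathcal D_{\mathcal J}$ built from a given solution), so you would need to either supply the corresponding general inequality from Sabot's framework or argue directly that $\eta^{-1}\ge\tfrac12$ by testing the solution against the degenerate form in $\mathcal M_{\mathcal J_1}$. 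Your idea of exploiting the extra involution $z\mapsto\lambda^{1/n}/z$ available when $m=n$ to halve the parameter count is a genuine simplification over a brute-force four-parameter computation, and is a reasonable route to close the case the paper leaves open.
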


\noindent\textbf{Remark.}  The above proposition follows directly from Sabot's Theorem once we have calculated the exact values of $\overline{\rho},\underline{\rho}$'s for $\mcJ_1$ and $\mcJ_2$. The only unclear case is the critical case $\frac 1m+\frac 1n=\frac 12$. Indeed, there are $3$ possible choices: $(m,n)=(3,6), (6,3)$ or $(4,4)$. The $m=n=4$ case can be studied directly by computation, which is tricky and long. We claim the non-existence result in this case without providing the details. For the $(m,n)=(3,6)$ and $(6,3)$ cases, experiments indicate that there is no solution to (\ref{eqn43}).

\begin{example}
We can compute the {unique} rotationally {invariant} symmetric form on $K_\lambda$ when $m=1$. See Figure \ref{fig12} for some typical such Julia sets. In particular, the Julia set corresponding to the $m=1,n=2$ case is homeomorphic to the double cover of the Sierpinski gasket, and as we shall see has the same renormalization constant.

\begin{figure}[htp]
	\includegraphics[width=4.5cm]{8_1_StationarySet_n_2_m_1.png}\qquad\qquad
	\includegraphics[width=4.5cm]{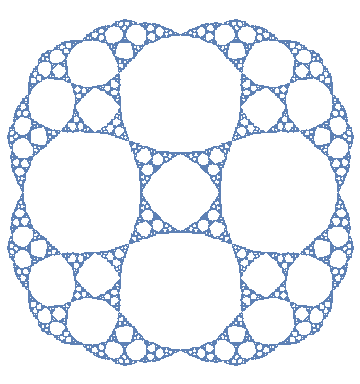}
	\begin{picture}(0,0)
	\put(-266,-10){$m=1, n=2$}
	\put(-99,-10){$m=1, n=3$}
	\end{picture}
	\caption{Some Julia sets with $m=1$.}\label{fig12}
\end{figure}
	
{Computing the} exact solution is tedious in general, but the renormalization constant $\eta$ is surprisingly  concise:
\[\eta=\frac{2n+1}{n+1}.\]
\end{example}

As we can see, for a rational map $R_\lambda$ possessing a fixed critical point, its associated Julia set is quite different from {those of MS maps}. We  leave the more general case, for example $R_\lambda$ possessing a periodic critical point, for future studies.

\section*{Acknowledgments}
{It is the wish of the authors} to thank Prof. Fei Yang for {the} helpful comments {related to} complex dynamics.

\bibliographystyle{amsplain}

\end{document}